\theoremstyle{plain}
\newtheorem{theorem}{Theorem}
\newtheorem{prop}{Proposition}
\newtheorem{lemma}{Lemma}
\newtheorem{corollary}{Corollary}
\theoremstyle{definition}
\newtheorem{definition}{Definition}
\newtheorem{remark}{Remark}
\newtheorem{assumption}{Assumption}
\def \T {\mathbb{T}}
\def \Z {\mathbb{Z}}
\def \R {\mathbb{R}}
\def \P {\mathbb{P}}
\def \N {\mathbb{N}}
\def \E {\mathbb{E}}
\def \dd {\mathrm{d}}
\def \e {\mathrm{e}}
\def \sgn {\mathrm{sign}}
\def \DD {\mathrm{D}}
\newcommand{\ind}[1]{\mathbf{1}_{#1}}
\renewcommand{\tilde}{\widetilde}
\renewcommand{\bar}{\overline}
\title{Viscous scalar conservation law with stochastic forcing: strong solution and invariant measure}
\thanks{This work is partially supported by the French National Research Agency (ANR) under the programs ANR-17-CE40-0030 - EFI - Entropy, flows, inequalities and QuAMProcs.}
\author{Sofiane Martel}
\address{INRIA Rennes - Bretagne Atlantique, 35042 Rennes, France.}
\email{sofiane.martel@inria.fr}
\author{Julien Reygner}
\address{Universit\'e Paris-Est, CERMICS (ENPC), 77455 Marne-la-Vall\'ee, France.}
\email{julien.reygner@enpc.fr}
\keywords{Stochastic conservation laws, Invariant measure}
\subjclass[2010]{35A01,35R60,60H15}
\begin{document}

\begin{abstract}
We are interested in viscous scalar conservation laws with a white-in-time but spatially correlated stochastic forcing. The equation is assumed to be one-dimensional and periodic in the space variable, and its flux function to be locally Lipschitz continuous and have at most polynomial growth. Neither the flux nor the noise need to be non-degenerate. In a first part, we show the existence and uniqueness of a global solution in a strong sense. In a second part, we establish the existence and uniqueness of an invariant measure for this strong solution.
\end{abstract}

\maketitle

\section{Introduction}

\subsection{Stochastic viscous scalar conservation law}

We are interested in the existence, uniqueness, regularity and large time behaviour of solutions of the following viscous scalar conservation law with additive and time-independent stochastic forcing
\begin{equation}\label{SSCL}
\dd u = - \partial_x A(u) \dd t + \nu \partial_{xx} u \dd t + \sum_{k\geq1} g_k \dd W^k(t) , \quad x \in \T , \quad t \geq 0,
\end{equation}
where $(W^k(t))_{t \geq 0}$, $k \geq 1$, is a family of independent Brownian motions. Here, $\T$ denotes the one-dimensional torus $\R/\Z$, meaning that the sought solution is periodic in space. The \textit{flux function} $A$ is assumed to satisfy the following set of conditions.
\begin{assumption}[on the flux function]\label{ass:A}
  The function $A : \R \to \R$ is $C^2$ on $\R$, its first derivative has at most polynomial growth:
    \begin{equation}\label{PGA}
      \exists C_1>0 , \quad \exists p_A \in \N^* , \quad \forall v \in \R , \qquad | A'(v) | \leq C_1 \left( 1+ |v|^{p_A} \right),
    \end{equation}
    and its second derivative $A''$ is locally Lipschitz continuous on $\R$.
\end{assumption}
The parameter $\nu>0$ is the \textit{viscosity} coefficient. In order to present our assumptions on the family of functions $g_k : \T \to \R$, $k \geq 1$, which describe the spatial correlation of the stochastic forcing of~\eqref{SSCL}, we first introduce some notation. For any $p\in[1,+\infty]$, we denote by $L^p_0(\T)$ the subset of functions $v \in L^p(\T)$ such that
\[
  \int_\T v \dd x = 0.
\]
The $L^p$ norm induced on $L^p_0(\T)$ is denoted by $\|\cdot\|_{L^p_0(\T)}$. For any integer $m \geq 0$, we denote by $H^m_0(\T)$ the intersection of the Sobolev space $H^m(\T)$ with $L^2_0(\T)$. Equipped with the norm
\[
  \|v\|_{H^m_0(\T)} := \left( \int_{\T} | \partial_x^m v |^2 \dd x \right)^{1/2}, 
\]
and the associated scalar product $\langle \cdot, \cdot\rangle_{H^m_0(\T)}$, it is a separable Hilbert space. On the one-dimensional torus, the Poincar\'e inequality implies that $H^{m+1}_0(\T) \subset H^m_0(\T)$ and $\|\cdot\|_{H^m_0(\T)} \leq \|\cdot\|_{H^{m+1}_0(\T)}$. Actually, the following stronger inequality holds: if $v\in H^1_0(\T)$, then $v\in L^\infty_0(\T)$ and for all $p \in [1,+\infty)$,
\begin{equation}\label{strongerpoincare}
  \|v\|_{L^p_0(\T)}\leq \|v\|_{L^\infty_0(\T)}\leq\|v\|_{H^1_0(\T)}.
\end{equation}

The spaces $H^m_0(\T),m\geq0,$ generalise to the class of fractional Sobolev spaces $H^s_0(\T)$, where $s\in[0,+\infty)$, which will be defined in Section \ref{sec:mild}.
We may now state:
\begin{assumption}[on the noise functions]\label{ass:g}
  For all $k \geq 1$, $g_k \in H^2_0(\T)$ and
  \begin{equation}\label{boundgk}
    D_0 := \sum_{k\geq1} \| g_k \|_{H^2_0(\T)}^2 < +\infty.
  \end{equation}
\end{assumption}

Let $(\Omega,\mathcal{F},\P)$ be a probability space, equipped with a normal filtration $(\mathcal{F}_t)_{t \geq 0}$ in the sense of~\cite[Section~3.3]{DZ92}, on which $(W^k)_{k \geq 1}$ is a family of independent Brownian motions. Under Assumption~\ref{ass:g}, the series $\sum_k g_k W^k$ converges in $L^2(\Omega,C([0,T],H^2_0(\T)))$, for any $T>0$, towards an $H^2_0(\T)$-valued Wiener process $(W^Q(t))_{t\in[0,T]}$ with respect to the filtration $(\mathcal{F}_t)_{t \geq 0}$, defined in the sense of \cite[Section 4.2]{DZ92}, with the trace class covariance operator $Q : H^2_0(\T) \to H^2_0(\T)$ given by
\begin{equation}\label{Q}
\forall u,v \in H^2_0(\T), \qquad \langle u, Qv\rangle_{H^2_0(\T)} = \sum_{k \geq 1} \langle u, g_k\rangle_{H^2_0(\T)}\langle v, g_k\rangle_{H^2_0(\T)}.
\end{equation}
Thus, almost surely, $t\mapsto W^Q(t)$ is continuous in $H^2_0(\T)$ and for all $u \in H^2_0(\T)$, the process $( \langle W^Q(t) , u \rangle_{H^2_0(\T)} )_{t\geq0}$ is a real-valued Wiener process with variance
\begin{equation}\label{covariance}
\E \left[\left\langle W^Q(t),u\right\rangle_{H^2_0(\T)}^2\right]=t\sum_{k\geq1}\langle g_k,u\rangle_{H^2_0(\T)}^2 .
\end{equation}

\subsection{Main results and previous works}\label{ss:mainresults}

First, we are interested in the well-posedness in the strong sense of Equation \eqref{SSCL}. In particular, we look for solutions that admit at least a second spatial derivative in order to give a classical meaning to the viscous term, in the sense of the following definition:
\begin{definition}[Strong solution to~\eqref{SSCL}]\label{defi:strong}
  Let $u_0 \in H^2_0(\T)$. Under Assumptions~\ref{ass:A} and~\ref{ass:g}, a strong solution to Equation~\eqref{SSCL} with initial condition $u_0$ is an $(\mathcal{F}_t)_{t \geq 0}$-adapted process $(u(t))_{t \geq 0}$ with values in $H^2_0(\T)$ such that, almost surely:
  \begin{enumerate}
    \item the mapping $t \mapsto u(t)$ is continuous from $[0,+\infty)$ to $H^2_0(\T)$;
    \item for all $t \geq 0$, the following equality holds:
    \begin{equation}\label{strongsolution}
    u(t) = u_0 + \int_0^t \left( - \partial_x A \left( u(s) \right) + \nu \partial_{xx} u(s) \right) \dd s + W^Q(t) .
    \end{equation} 
  \end{enumerate}
\end{definition}

In the above definition, the first condition ensures that the time integral in Equation \eqref{strongsolution} is a well-defined Bochner integral in $L^2_0(\T)$. For a careful introduction of the general concepts of random variables and stochastic processes in Hilbert spaces, the reader is referred to the third and fourth chapters of the reference book \cite{DZ92}.

Our first result is the following:
\begin{theorem}[Well-posedness]\label{wellposedness}
Let $u_0 \in H^2_0(\T)$. Under Assumptions \ref{ass:A} and \ref{ass:g}, there exists a unique strong solution $(u(t))_{t \geq 0}$ to Equation~\eqref{SSCL} with initial condition $u_0$. Moreover, the solution depends continuously on initial data in the following sense: if $( u_0^{(j)} )_{j\geq1}$ is a sequence of $H^2_0(\T)$ satisfying
\[ \lim_{j\to\infty} \left\| u_0 - u^{(j)}_0 \right\|_{H^2_0(\T)} = 0 , \]
then, denoting by $( u^{(j)}(t) )_{t\geq0,j\geq1}$ the family of associated solutions, for any $T\geq0$, we have almost surely
\[ \lim_{j\to\infty} \sup_{t\in[0,T]} \left\| u(t) - u^{(j)}(t) \right\|_{H^2_0(\T)} = 0 . \]
\end{theorem}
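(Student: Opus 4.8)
The plan is to combine the classical semigroup treatment of semilinear SPDEs with additive noise (as in \cite{DZ92}) with energy estimates that exploit the conservation structure of~\eqref{SSCL}. Let $(S(t))_{t\ge0}=(\e^{\nu t\partial_{xx}})_{t\ge0}$ be the analytic semigroup generated by $\nu\partial_{xx}$ on $L^2_0(\T)$, and let $Z(t)=\int_0^t S(t-s)\,\dd W^Q(s)$ be the associated stochastic convolution. Using Assumption~\ref{ass:g} together with the smoothing of $S$, I would first show that $Z$ admits a modification with almost surely continuous paths in $H^2_0(\T)$ (in fact with some extra space-time regularity, via the factorisation method of \cite{DZ92}). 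Setting $v:=u-Z$ then reduces~\eqref{SSCL} to the pathwise deterministic parabolic problem $\partial_t v=\nu\partial_{xx}v-\partial_x A(v+Z)$ with $v(0)=u_0$, whose mild formulation reads $v(t)=S(t)u_0-\int_0^t S(t-s)\partial_x A(v(s)+Z(s))\,\dd s$.

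For local well-posedness I would run a Banach fixed point argument in $C([0,\tau],H^2_0(\T))$ for $\tau$ small. Two ingredients are needed: the smoothing estimate $\|S(t)\partial_x f\|_{H^2_0(\T)}\le C t^{-1/2}\|f\|_{H^2_0(\T)}$, whose integrable $t^{-1/2}$ singularity absorbs the single derivative lost in the flux term; and the fact that, by the one-dimensional embedding $H^2_0(\T)\hookrightarrow C^1(\T)$ and Assumption~\ref{ass:A} (in particular the local Lipschitz continuity of $A''$), the map $w\mapsto A(w)$ is locally Lipschitz from $H^2_0(\T)$ to $H^2_0(\T)$. This yields a unique maximal mild solution on a random interval $[0,\tau^*)$, and standard parabolic regularity upgrades it to a strong solution in the sense of Definition~\ref{defi:strong} on $[0,\tau^*)$.

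The heart of the argument, and the main obstacle, is to prove that $\tau^*=+\infty$ almost surely, i.e. to rule out finite-time blow-up of $\|u(t)\|_{H^2_0(\T)}$. Here the polynomial growth allowed by~\eqref{PGA} prevents any naive energy estimate from closing: testing the $v$-equation against $v$ produces a term $\|A(v+Z)\|_{L^2_0(\T)}^2$ of supercritical polynomial order. The key observation is that one should instead estimate $u$ itself. For every integer $n\ge1$, applying It\^o's formula to $\|u(t)\|_{L^{2n}_0(\T)}^{2n}$ (justified on Galerkin approximations and up to the localising stopping times below) and integrating by parts, the flux contribution is $-2n\int_\T u^{2n-1}\partial_x A(u)\,\dd x=-2n\int_\T \partial_x\Psi(u)\,\dd x=0$, where $\Psi'(\xi)=\xi^{2n-1}A'(\xi)$, since it is a perfect space derivative integrated over the torus. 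Thus the polynomially growing nonlinearity contributes nothing, the viscous term is dissipative, and the It\^o correction is bounded by $\|\sum_{k\ge1} g_k^2\|_{L^\infty(\T)}\le D_0$ thanks to Assumption~\ref{ass:g} and~\eqref{strongerpoincare}. Iterating these $L^{2n}$ bounds in $n$ (a Moser-type iteration using the dissipation term $\int u^{2n-2}(\partial_x u)^2=n^{-2}\|\partial_x(u^n)\|_{L^2}^2$ together with~\eqref{strongerpoincare}) yields an a priori bound on $\E\,\sup_{t\le T}\|u(t)\|_{L^\infty_0(\T)}^p$. Once this $L^\infty$ control is available the flux term becomes effectively Lipschitz, and two further It\^o estimates on $\|\partial_x u\|_{L^2_0(\T)}^2$ and $\|\partial_{xx} u\|_{L^2_0(\T)}^2$—where the nonlinear contributions are now dominated using $\|A'(u)\|_{L^\infty(\T)}\le C_1(1+\|u\|_{L^\infty_0(\T)}^{p_A})$ and Young's inequality against the dissipation—close the $H^2_0(\T)$ estimate by Gronwall's lemma. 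Carrying out these computations up to $\tau_R:=\inf\{t:\|u(t)\|_{H^2_0(\T)}\ge R\}$ and letting $R\to\infty$ shows $\P(\tau^*\le T)=0$ for every $T$, hence global existence.

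Finally, uniqueness and continuous dependence follow from a single pathwise energy estimate. Given two solutions $u,\tilde u$ driven by the same noise, their difference $r:=u-\tilde u$ satisfies $\partial_t r=\nu\partial_{xx}r-\partial_x\bigl(A(u)-A(\tilde u)\bigr)$, in which the noise has cancelled. Because any strong solution has almost surely continuous paths in $H^2_0(\T)$, the quantity $R(\omega):=\sup_{t\le T}\bigl(\|u(t)\|_{H^2_0(\T)}+\|\tilde u(t)\|_{H^2_0(\T)}\bigr)$ is almost surely finite, so by~\eqref{strongerpoincare} both $u$ and $\tilde u$ range in a fixed (random) ball of $L^\infty_0(\T)$ on which $A$ is Lipschitz. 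Testing against $r$ and using Young's inequality gives $\frac{\dd}{\dd t}\|r(t)\|_{L^2_0(\T)}^2\le C(R)\|r(t)\|_{L^2_0(\T)}^2$, whence $r\equiv0$ when the initial data coincide; repeating the argument at the levels of $H^1_0(\T)$ and $H^2_0(\T)$ and applying Gronwall's lemma yields the claimed almost sure convergence $\sup_{t\le T}\|u(t)-u^{(j)}(t)\|_{H^2_0(\T)}\to0$ whenever $\|u_0-u_0^{(j)}\|_{H^2_0(\T)}\to0$. The decisive difficulty throughout is the global a priori bound of the previous paragraph; the local theory and the stability estimates are comparatively routine once the conservation structure is used to neutralise the polynomial growth of the flux.
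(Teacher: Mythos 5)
Your overall architecture (mild formulation via the stochastic convolution, Banach fixed point, maximal solution, $L^p$ estimates exploiting $\int_\T u^{p-1}\partial_x A(u)\,\dd x=0$, then higher-order energy estimates and a Gr\"onwall argument for continuous dependence) matches the paper's. But there is a genuine gap at the decisive step, the one you yourself identify as the heart of the matter. You claim that iterating the $L^{2n}$ It\^o estimates "Moser-style" yields a bound on $\E\sup_{t\le T}\|u(t)\|_{L^\infty_0(\T)}^p$, and you then use this $L^\infty$ control to make the flux term effectively Lipschitz. This is not justified and is not what those estimates deliver: they give time-integrated bounds of the form $\E\int_0^{t\wedge T_r}\|u(s)\|_{L^{2n}_0(\T)}^{2n}\,\dd s$ with constants that grow with $n$ (the It\^o correction contributes a factor of order $n^2$ at each level), no supremum in time is ever controlled (the martingale term is discarded by taking expectations, and you never invoke a Burkholder--Davis--Gundy argument), and $L^{2n}$ bounds with $n$-dependent constants do not pass to $L^\infty$ without a careful, and here unestablished, iteration. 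The paper closes the $H^1$ estimate without any $L^\infty$ control: the troublesome term $\int_0^{t\wedge T_r}\int_\T(\partial_x u)^2 u^{2p_A}\,\dd x\,\dd s$ produced by the polynomial growth of $A'$ is recognised as exactly the dissipation functional appearing on the left-hand side of the $L^{2p_A+2}$ It\^o identity \eqref{intermapprox}, hence is bounded directly by $\|u_0\|_{L^{2p_A+2}_0(\T)}^{2p_A+2}$ plus lower-order quantities. That identification is the missing idea in your write-up.

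Two further structural points. First, because you run the local fixed point in $H^2_0(\T)$, your blow-up criterion is in the $H^2_0(\T)$ norm, so you must close an $H^2$ a priori estimate; you only gesture at this, and the relevant It\^o computation involves terms like $\int_\T \partial_{xx}u\,\partial_{xx}(A'(u)\partial_x u)\,\dd x$ which require more regularity than $H^2$ to integrate by parts. The paper avoids this entirely by doing the local theory in $H^1_0(\T)$, so that only the $H^1$ bound of Lemma \ref{H1approx} is needed to rule out explosion, with $H^2$ continuity recovered a posteriori from the mild formulation (Proposition \ref{prop:mild-strong}). Second, your uniqueness/continuous-dependence argument tests the difference equation against $\partial_{xx}r$ at the $H^2$ level; this produces $\partial_{xxx}r$-type terms that strong solutions (which are only $H^2_0(\T)$-valued, since the noise is only $H^2$-regular) do not support pointwise in time. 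The paper instead runs Gr\"onwall on the mild formulation, using the smoothing estimate $\|S_t\partial_x f\|_{H^2_0(\T)}\le C_4 t^{-1/2}\|f\|_{H^2_0(\T)}$ and one iteration of the singular-kernel inequality to remove the $t^{-1/2}$ singularity; this stays at the regularity level the solution actually has. These two issues are repairable, but the $L^\infty$ claim as stated would not survive scrutiny and should be replaced by the dissipation-identification argument.
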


Similar results have already been established: the case where the flux $A$ is strictly convex is treated in \cite[Appendix A]{Bor12b}, and the case where $A$ is globally Lipschitz continuous {is treated in \cite{Hof13}}. Furthermore, the case of \textit{mild} solutions (in $L^p$ spaces) has been looked at in \cite{GR00}. Here, no global Lipschitz continuity assumption nor restrictions on the convexity of the flux function are made. We can also point out that the well-posedness of stochastically forced conservations laws in the inviscid case (\textit{i.e.} when $\nu=0$) has been under a great deal of investigation in the recent years. In this "hyperbolic" framework, the appearance of shocks prevents the solutions to be smooth enough to be considered in a strong sense as in our present work. Therefore, the study of \textit{entropic} solutions \cite{FN08} or \textit{kinetic} solutions \cite{DV14,GH18} to the SPDE have been the two main approaches, both of which rely on a \textit{vanishing viscosity} argument: the entropic or kinetic solution is sought as the limit of its viscous approximation as the viscosity coefficient tends to $0$.

More recent works concern the Burgers equation with \textit{stochastic transport noise} in the viscous and inviscid cases~\cite{ABT19}, or the spatial regularity for solutions of the viscous Burgers equation with additive noise~\cite{JLP19}. A natural extension of our works would be to consider a viscous conservation law with multiplicative noise or even, as in~\cite{ABT19}, a transport noise.

Let $C_b(H^2_0(\T))$ denote the set of continuous and bounded functions from $H^2_0(\T)$ to $\R$. As a consequence of Theorem \ref{wellposedness}, we can define a family of functionals $(P_t)_{t\geq0}$ on $C_b(H^2_0(\T))$ by writing
\[ P_t\varphi(u_0) := \E_{u_0} \left[ \varphi(u(t)) \right] , \qquad t\geq0 , \quad u_0 \in H^2_0(\T) , \]
where the notation $\E_{u_0}$ indicates that the random variable $u(t)$ is the solution to \eqref{SSCL} at time $t$ starting from the initial condition $u_0$.

\begin{corollary}\label{markov}
 Under Assumptions \ref{ass:A} and \ref{ass:g}, the family $(P_t)_{t\geq0}$ is a Feller semigroup and the process $(u(t))_{t\geq0}$ is a strong Markov process in $H^2_0(\T)$ with semigroup $(P_t)_{t\geq0}$.
\end{corollary}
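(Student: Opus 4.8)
The plan is to derive Corollary~\ref{markov} as a direct consequence of the well-posedness Theorem~\ref{wellposedness}, using the standard machinery that connects pathwise uniqueness and continuous dependence on initial data to the Markov and Feller properties. The two ingredients I would single out are (i) the \emph{flow property} of the solution map, which follows from uniqueness, and (ii) a form of independence of increments of the driving noise, which allows one to condition on the past. Throughout, I would write $u(t;u_0,\omega)$ for the solution at time $t$ started from $u_0$, emphasising its dependence on the initial datum and the noise path.

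\medskip

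First I would establish the \textbf{Markov property}. The key structural fact is that the Wiener process $W^Q$ has stationary and independent increments: for fixed $s\geq0$, the shifted process $\widetilde{W}^Q(r) := W^Q(s+r) - W^Q(s)$ is again an $H^2_0(\T)$-valued $Q$-Wiener process, independent of $\mathcal{F}_s$, and adapted to the shifted filtration $\widetilde{\mathcal{F}}_r := \mathcal{F}_{s+r}$. By pathwise uniqueness (Theorem~\ref{wellposedness}), the solution started at time $0$ from $u_0$ and evaluated at time $s+r$ must coincide almost surely with the solution started at time $s$ from the $\mathcal{F}_s$-measurable random initial datum $u(s)$ and driven by $\widetilde{W}^Q$; that is, one has the cocycle identity
\[
  u(s+r;u_0) = u\bigl(r; u(s;u_0), \widetilde{W}^Q\bigr), \qquad r\geq0,
\]
almost surely. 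Since $\widetilde{W}^Q$ is independent of $\mathcal{F}_s$, for any $\varphi\in C_b(H^2_0(\T))$ we may compute $\E[\varphi(u(s+r))\mid\mathcal{F}_s]$ by freezing $u(s)$ and integrating over the independent copy of the noise, which yields exactly $(P_r\varphi)(u(s))$. This is the Markov property; the semigroup (Chapman--Kolmogorov) relation $P_{s+r}=P_s P_r$ follows by taking expectations. The only technical care needed here is to justify the measurability of the solution map $(u_0,\omega)\mapsto u(r;u_0,\omega)$ jointly in its arguments, so that the conditional expectation may genuinely be evaluated by substitution; this is standard but I would invoke the continuity in $u_0$ below to secure it.

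\medskip

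Next I would prove the \textbf{Feller property}, namely that $P_t$ maps $C_b(H^2_0(\T))$ into itself. Boundedness is immediate since $\|P_t\varphi\|_\infty\leq\|\varphi\|_\infty$. For continuity, I would use precisely the continuous dependence on initial data asserted in Theorem~\ref{wellposedness}: if $u_0^{(j)}\to u_0$ in $H^2_0(\T)$, then almost surely $u^{(j)}(t)\to u(t)$ in $H^2_0(\T)$ for each fixed $t$, whence $\varphi(u^{(j)}(t))\to\varphi(u(t))$ almost surely by continuity of $\varphi$; bounded convergence then gives $P_t\varphi(u_0^{(j)})\to P_t\varphi(u_0)$. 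This shows $P_t\varphi$ is (sequentially, hence) continuous on the metric space $H^2_0(\T)$, so $P_t\varphi\in C_b(H^2_0(\T))$.

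\medskip

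Finally, I would upgrade the Markov property to the \textbf{strong Markov property} and continuity of the process. The sample paths are continuous in $H^2_0(\T)$ by part~(1) of Definition~\ref{defi:strong}, so $(u(t))_{t\geq0}$ is a continuous $H^2_0(\T)$-valued process. A continuous Feller process on a Polish space is automatically strong Markov: the standard argument approximates a finite stopping time $\tau$ from above by the discrete stopping times $\tau_n := 2^{-n}\lceil 2^n\tau\rceil$, applies the ordinary Markov property at each dyadic level, and passes to the limit using the right-continuity of $t\mapsto P_t\varphi(u)$ (a consequence of Feller continuity together with path continuity) and the continuity of $\varphi$. I expect the main obstacle, and the step deserving the most care, to be the rigorous formulation of the cocycle identity and the attendant joint measurability of the solution map: this requires either a careful disintegration argument or an appeal to a version of the solution that is jointly measurable in $(u_0,\omega)$, which the continuous dependence in Theorem~\ref{wellposedness} makes available. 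The remaining steps are routine given the well-posedness theory already established.
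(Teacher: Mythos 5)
Your proposal is correct and follows essentially the same route as the paper: uniqueness of the strong solution together with the stationarity and independence of the increments of $W^Q$ gives the Markov/semigroup property, the continuous dependence on initial data from Theorem~\ref{wellposedness} gives the Feller property, and the strong Markov property is then deduced from the Feller property by the classical dyadic-approximation argument. The paper merely states these steps in a few lines (citing \cite[Theorem 16.21]{Bre68} for the last one), whereas you spell them out, including the cocycle identity and the joint-measurability caveat, but the substance is identical.
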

\begin{proof}
The uniqueness of a strong solution and the fact that, for all $t\geq0$, the processes $( W^Q(t+s)-W^Q(t))_{s\geq0}$ and $(W^Q(s))_{s\geq0}$ have the same distribution, ensure that $(P_t)_{t\geq0}$ is a semigroup, and therefore that $(u(t))_{t\geq0}$ is a Markov process. The Feller property is a straightforward consequence of the result of continuous dependence on initial conditions given in Theorem \ref{wellposedness}, whereas it is a classical result that the strong Markov property of $(u(t))_{t\geq0}$ follows from the Feller property of $(P_t)_{t\geq0}$ (see for instance the proof of \cite[Theorem 16.21]{Bre68}).
\end{proof}

Let $\mathcal{B}(H^2_0(\T))$ denote the Borel $\sigma$-algebra of the metric space $H^2_0(\T)$, and $\mathcal{P}(H^2_0(\T))$ refer to the set of Borel probability measures on $H^2_0(\T)$. The Markov property allows us to extend the notion of strong solution to \eqref{SSCL} by considering not only a deterministic initial condition but any $\mathcal F_0$-measurable random variable $u_0$ on $H^2_0(\T)$. In this perspective, we define the dual semigroup $\left(P^*_t\right)_{t\geq0}$ of $(P_t)_{t\geq0}$ by
\[ P^*_t\alpha(\Gamma) := \int_{H^2_0(\T)} \P_{u_0} \left( u(t) \in \Gamma \right) \dd\alpha(u_0) , \qquad t\geq0,\quad \alpha \in \mathcal P\left(H^2_0(\T)\right),\quad \Gamma \in \mathcal B\left(H^2_0(\T)\right). \]
In particular, $P_t^*\alpha$ is the law of $u(t)$ when $u_0$ is distributed according to $\alpha$.

\begin{definition}[Invariant measure]\label{defi:IM}
We say that a probability measure $\mu \in \mathcal P (H^2_0(\T))$ is an invariant measure for the semigroup $(P_t)_{t\geq0}$ (or equivalently for the process $\left( u(t) \right)_{t\geq0}$) if and only if
\[ \forall t \geq 0 , \quad P^*_t \mu = \mu . \]
\end{definition}

\begin{theorem}[Existence, uniqueness and estimates on the invariant measure]\label{thm:im}
Under Assumptions \ref{ass:A} and \ref{ass:g}, the process $(u(t))_{t\geq0}$ solution to the SPDE \eqref{SSCL} admits a unique invariant measure $\mu$. Besides, if $u \in H^2_0(\T)$ is distributed according to $\mu$, then $\E[\|u\|^2_{H^2_0(\T)}] < +\infty$ and, for all $p \in [1,+\infty)$, $\E[\|u\|^p_{L^p_0(\T)}] < +\infty$.
\end{theorem}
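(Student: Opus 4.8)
The plan is to obtain existence and the moment bounds from uniform-in-time a priori estimates combined with the Krylov--Bogoliubov procedure, and to derive uniqueness from the $L^1$-contraction property of the equation. Fix a deterministic $u_0\in H^2_0(\T)$ and let $(u(t))_{t\ge0}$ be the strong solution from Theorem~\ref{wellposedness}. The first step is a family of clean $L^p$ estimates. Applying It\^o's formula to $\int_\T|u|^p\,\dd x$, the flux contribution $-\int_\T|u|^{p-2}u\,\partial_x A(u)\,\dd x$ is the integral over $\T$ of a perfect $x$-derivative and therefore vanishes; the viscous term produces the dissipation $-\nu p(p-1)\int_\T|u|^{p-2}(\partial_x u)^2\,\dd x$; and the It\^o correction is bounded using \eqref{boundgk} by $\tfrac{p(p-1)}2 D_0\int_\T|u|^{p-2}\,\dd x$. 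Since $u$ is mean-zero it vanishes at some point, so a one-dimensional Poincar\'e argument (as in \eqref{strongerpoincare}) applied to $|u|^{p/2-1}u$ gives $\int_\T|u|^{p-2}(\partial_x u)^2\,\dd x\ge\tfrac4{p^2}\|u\|_{L^p_0(\T)}^p$. Hence the dissipation controls $\|u\|_{L^p_0(\T)}^p$, and a Young--Gr\"onwall argument yields $\sup_{t\ge0}\E\,\|u(t)\|_{L^p_0(\T)}^p<\infty$ for every $p\in[1,\infty)$.

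The naive energy method breaks down at the $H^1$ and $H^2$ levels: the flux generates the superlinear, sign-indefinite term $\int_\T A''(u)(\partial_x u)^3\,\dd x$, which cannot be absorbed into the dissipation. I would instead bootstrap regularity through the mild formulation over a unit time window, exploiting the smoothing of the heat semigroup $S(\tau)=\e^{\nu\tau\partial_{xx}}$. In one space dimension $S(\tau)\partial_x$ maps $L^2_0(\T)$ into $L^\infty_0(\T)$ with norm of order $\tau^{-3/4}$, an integrable singularity; together with the bound on $\|A(u)\|_{L^2(\T)}$ coming from the $L^p$-moments of the first step, the Duhamel representation gives $\sup_{t\ge0}\E\,\|u(t)\|_{L^\infty_0(\T)}^q<\infty$ for all $q$. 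Feeding this $L^\infty$-control of $A'(u)$ back into the same estimate (through the composition estimate for $A$ in fractional spaces and the smoothing $\|S(\tau)\partial_x f\|_{H^m_0(\T)}\le C\tau^{-(m+1-s)/2}\|f\|_{H^s(\T)}$) I would bootstrap to $\sup_{t\ge0}\E\,\|u(t)\|_{H^m_0(\T)}^q<\infty$ for $m\le3$ and every $q$. This bootstrap is where most of the technical work lies.

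With these bounds in hand, existence follows from Krylov--Bogoliubov. The uniform bound on $\E\,\|u(t)\|_{H^3_0(\T)}^2$ makes the time-averaged laws $\mu_T:=\tfrac1T\int_0^T P_t^*\delta_{u_0}\,\dd t$ tight in $H^2_0(\T)$, by the compactness of the embedding $H^3_0(\T)\hookrightarrow H^2_0(\T)$. By the Feller property established in Corollary~\ref{markov}, every weak limit point of $(\mu_T)_{T>0}$ is invariant, which produces an invariant measure $\mu$. The uniform $H^2_0$ and $L^p_0$ bounds pass to $\mu$ by lower semicontinuity of the norms under weak convergence, giving $\E_\mu\|u\|_{H^2_0(\T)}^2<\infty$ and $\E_\mu\|u\|_{L^p_0(\T)}^p<\infty$ for all $p$, which are the asserted estimates.

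For uniqueness I would use that, since the forcing is additive, the difference $w=u_1-u_2$ of two solutions driven by the same Brownian motions solves the deterministic equation $\partial_t w=\nu\partial_{xx}w-\partial_x\big(A(u_1)-A(u_2)\big)$. A Kruzhkov-type computation, obtained by regularising $v\mapsto|v|$ and passing to the limit, shows that $t\mapsto\|w(t)\|_{L^1_0(\T)}$ is non-increasing, with a nonnegative dissipation supported at the sign changes of $w$. Given two invariant measures, applying Krylov--Bogoliubov to the coupled system $(u_1,u_2)$ yields a jointly stationary solution with these measures as marginals; along it $\|w(t)\|_{L^1_0(\T)}$ is both stationary and non-increasing, hence almost surely constant, so the dissipation vanishes for almost every $t$. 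This forces $w$ to keep a constant sign, and as $w$ is mean-zero we conclude $w\equiv0$, so the two marginals coincide and $\mu$ is unique. The main obstacle is the rigorous justification of the final implication --- that vanishing dissipation forces $w$ to have constant sign, in particular handling the possibly degenerate zeros of $w$ --- which requires a careful use of the parabolic regularity of $w$.
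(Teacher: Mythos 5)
Your existence argument is a genuinely different (and heavier) route than the paper's, but your uniqueness argument has a gap that you yourself flag and that is in fact the crux of the whole problem. You propose to take a jointly stationary coupling of the two invariant measures, observe that $t\mapsto\|u_1(t)-u_2(t)\|_{L^1_0(\T)}$ is pathwise non-increasing and stationary in law, conclude it is a.s.\ constant, and then argue that the resulting vanishing of the Kruzhkov dissipation forces $w=u_1-u_2$ to have constant sign, hence $w\equiv0$ by the zero-mean constraint. The last implication does not follow from what precedes it. The dissipation in the regularised $L^1$ identity is $\lim_{\eta\to0}\frac{\nu}{\eta}\int_\T \ind{|w|\le\eta}(\partial_x w)^2\dd x$, which is supported on the zero level set of $w$ and vanishes whenever every sign change of $w$ occurs through a degenerate zero ($w=\partial_x w=0$). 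Ruling this out requires Sturm-type zero-counting / instantaneous non-degeneracy results for one-dimensional parabolic equations, tools that are nowhere justified here and whose applicability depends on regularity of the coefficient $\bigl(A(u_1)-A(u_2)\bigr)/(u_1-u_2)$; moreover, even identifying the limit of the dissipation with a sum over the zero set requires genericity of that set for a.e.\ $t$. As written, uniqueness is not established. The paper closes this step by an entirely different mechanism: a small-noise irreducibility argument. Using the support theorem for the stochastic convolution, with positive probability the noise is small over a window of length $t_{\varepsilon,M}$, during which the energy dissipation of the noiseless dynamics drives \emph{both} solutions into a small $L^2_0(\T)$ ball, hence close to each other; iterating via the strong Markov property at the return times to a fixed $H^1_0$-ball (whose a.s.\ finiteness comes from the $p=2$ energy estimate) shows that a.s.\ $\|u_1(t)-u_2(t)\|_{L^1_0(\T)}$ drops below any $\varepsilon$, and the $L^1_0(\T)$-contraction then gives convergence to $0$, from which equality of the two invariant measures follows by testing against bounded Lipschitz functions. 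Some such quantitative contraction input is needed; monotonicity plus stationarity alone is not enough.

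On existence, your route is workable in principle but more demanding than necessary, and one claim is at the edge of what the hypotheses permit. You aim for uniform-in-time moments in $H^3_0(\T)$ to get tightness in $\mathcal P(H^2_0(\T))$ and then apply Krylov--Bogoliubov with the $H^2_0(\T)$-Feller property. Since the $g_k$ are only assumed to lie in $H^2_0(\T)$, the stochastic convolution gains exactly one derivative in mean square, so $H^3_0(\T)$ is borderline and the drift bootstrap (which requires all-order moments of $\|u(t)\|_{H^2_0(\T)}$ uniformly in time, via composition estimates for a polynomially growing $A'$) is a substantial undertaking that you only sketch; aiming for $H^{2+\delta}_0(\T)$ with $\delta<1$ would already suffice for compactness and is safer. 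The paper avoids all of this: it only proves \emph{time-averaged} bounds ($\E\int_0^{t\wedge T_r}\|u\|_{H^2_0}^2$ and the $H^1_0$ bound of Lemma~\ref{H1approx}), gets tightness of the Ces\`aro averages in the much weaker space $\mathcal P(L^1_0(\T))$ via the compact embedding $H^1_0(\T)\subset\subset L^1_0(\T)$, and then uses the $L^1_0(\T)$-contraction to upgrade $P_t\varphi$ to an $L^1_0(\T)$-continuous function, which is exactly what is needed to run the Krylov--Bogoliubov argument at the $L^1_0(\T)$ level; the $H^2_0$ and $L^p_0$ moments of $\mu$ are then recovered by Portmanteau and lower semicontinuity of the norms. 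Your first step (the uniform $L^p_0(\T)$ bound via It\^o, cancellation of the flux term, and the Poincar\'e inequality applied to $|u|^{p/2}$) is correct and matches the paper's Lemma~\ref{momentapprox} in substance.
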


A few similar results exist in the literature. Da Prato, Debussche and Temam \cite{DDT94} have studied the viscous Burgers equation (which corresponds to the flux function $A(u)=u^2/2$) perturbed by an additive space-time white noise whereas Da Prato and Gatarek \cite{DG95} studied the same equation but with a multiplicative white noise. Both showed the well-posedness of the equation as well as the existence of an invariant measure. These results are moreover put in a much detailed context in the two reference books \cite{DZ92,DZ96}. Boritchev \cite{Bor12,Bor12b,Bor13} showed the existence and uniqueness of an invariant measure for the viscous \textit{generalised} Burgers equation (which corresponds to the case of strictly convex flux function) perturbed by a white-in-time and spatially correlated noise. E, Khanin, Mazel and Sinai \cite{EKMS00} showed the existence and uniqueness of an invariant measure for the inviscid Burgers equation with a white-in-time and spatially correlated noise. Debussche and Vovelle \cite{DV15} generalised this last result by extending it to \textit{non-degenerate} flux functions (roughly speaking, there is no non-negligible subset of $\R$ on which $A$ is linear). Besides, the fact that these results from \cite{EKMS00,DV15} also hold when $\nu=0$ makes them quite powerful: it shows indeed that the presence of a viscous term is not a necessary condition for the solution to be stationary. On this topic, we refer the reader to a recent nicely detailed survey by Chen and Pang~\cite{CP19}.

The stochastic Burgers equation is mainly studied as a one-dimensional model for \textit{turbulence}. By showing a stable behaviour at large times, this model manages, to some extent, to fit the predicitions of Kolmogorov's "K41" theory about the \textit{universal} properties of a turbulent flow \cite{Kol41a,Kol41b}. Whether it is modelled by the Burgers equation or a by more general process such as Equation \eqref{SSCL}, turbulence is then described through the statistics of some particular small-scale quantities in the stationary state \cite{E00,E01}. Sharp estimates were given by Boritchev for these small-scale quantities \cite{Bor12b}, which were furthermore shown to be independent of the viscosity coefficient. One of the purposes of this paper is to lay the groundwork for the numerical analysis of Equation \eqref{SSCL}. In a companion paper \cite{BMR19}, we introduce a finite-volume approximation of \eqref{SSCL} which allows to approximate the invariant measure $\mu$. Generating random variables with distribution $\mu$ shall eventually lead us to compute said small-scale quantities and analyse the development of turbulence in the model established by Equation \eqref{SSCL}.

\subsection{Outline of the article}
The proofs of Theorems \ref{wellposedness} and \ref{thm:im} are respectively detailed in Sections \ref{XP} and \ref{IM}.

\section{Well-posedness and regularity}\label{XP}

This section is dedicated to the proof of Theorem \ref{wellposedness}. This proof is decomposed as follows. In Subsection \ref{sec:mild}, we introduce a weaker formulation of Equation \eqref{SSCL}, the so-called \textit{mild formulation}. In Subsection~\ref{LWP}, we show that Equation~\eqref{SSCL} is well-posed locally in time both in the mild and in the strong sense. In Subsection~\ref{sec:estimates}, we give higher bounds for the Lebesgue and Sobolev norms of this local solution. Eventually, these estimates allow us to extend the local solution to a global-in-time solution, and thus to prove Theorem~\ref{wellposedness} in Subsection~\ref{proofthm1}. In the sequel, some results (Propositions~\ref{localboundedness}, \ref{prop:heatkernel}, \ref{continuity} and \ref{prop:mild-strong}) are either standard or mild adaptations of results which are proved elsewhere. We omit their proof here and refer to Subsection~2.2.5 in~\cite{Mar19} for details.

\subsection{Mild formulation of \eqref{SSCL}}\label{sec:mild} 
In this subsection, we collect preliminary results which shall enable us to provide a \textit{mild} formulation of Equation \eqref{SSCL}, for which we prove the existence and uniqueness of a solution on a small interval.

\subsubsection{Fractional Sobolev spaces}

For all $m' \geq 1$, let us define $\lambda_{2m'-1} = \lambda_{2m'} = -(2\pi m')^2$, and $e_{2m'-1}(x) = \sqrt{2}\sin(2\pi m'x)$, $e_{2m'}(x) = \sqrt{2}\cos(2\pi m'x)$. The family $(e_m)_{m \geq 1}$ is a complete orthogonal basis of $L^2_0(\T)$ such that, for all $m \geq 1$, $e_m$ is $C^\infty$ on $\T$ and $\partial_{xx} e_m = \lambda_m e_m$. With respect to this basis, we define the fractional Sobolev space $H^s_0(\T)$, for any $s\in[0,+\infty)$, as the space of functions $v\in L^2_0(\T)$ such that
\begin{equation}\label{fractional}
\| v \|_{H^s_0(\T)} := \left( \sum_{m\geq1} (-\lambda_m)^s \langle v,e_m \rangle_{L^2_0(\T)}^2 \right)^{1/2} < +\infty.
\end{equation}

We take from \cite[Appendice A]{Bor12b} the following proposition and adapt it to our case of a flux function satisfying Assumption \ref{ass:A}:
\begin{prop}\label{localboundedness}
Under Assumption \ref{ass:A}, for any $s\in[1,2]$, the mapping
\[ v \in H^s_0(\T) \longmapsto \partial_x A(v) \in H^{s-1}_0(\T) \]
is bounded on bounded subsets of $H^s_0(\T)$. Moreover, when $s=1$ or $s=2$, it is Lipschitz continuous on bounded subsets of $H^s_0(\T)$.
\end{prop}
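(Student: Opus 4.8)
The plan is to prove the two endpoint cases $s=1$ and $s=2$ by hand, and then to cover the whole range $s\in(1,2)$ by a nonlinear interpolation argument. A preliminary remark simplifies the bookkeeping: for any $v$, the function $\partial_x A(v)$ has zero spatial average on $\T$ by periodicity, so the map automatically takes values in the mean-zero spaces and only the quantitative Sobolev bound is at stake. For $s=1$, I would write $\partial_x A(v)=A'(v)\,\partial_x v$ and bound the $L^2_0(\T)$ norm by $\|A'(v)\|_{L^\infty}\|\partial_x v\|_{L^2(\T)}$. The embedding \eqref{strongerpoincare} gives $\|v\|_{L^\infty_0(\T)}\le\|v\|_{H^1_0(\T)}$, so on a ball of radius $R$ the polynomial growth bound \eqref{PGA} controls $\|A'(v)\|_{L^\infty}$ by a constant depending only on $R$, which yields boundedness on bounded sets. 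For the Lipschitz estimate I would split
\[
A'(v)\partial_x v-A'(w)\partial_x w=A'(v)\,\partial_x(v-w)+\bigl(A'(v)-A'(w)\bigr)\partial_x w,
\]
and use that $A'$ is Lipschitz on the compact interval $[-R,R]$ (since $A''$ is continuous) together with \eqref{strongerpoincare} to bound $\|A'(v)-A'(w)\|_{L^\infty}\le C(R)\|v-w\|_{L^\infty}\le C(R)\|v-w\|_{H^1_0(\T)}$.

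For $s=2$, the target norm is $\|\partial_x A(v)\|_{H^1_0(\T)}=\|\partial_{xx}A(v)\|_{L^2(\T)}$, and
\[
\partial_{xx}A(v)=A''(v)(\partial_x v)^2+A'(v)\,\partial_{xx}v.
\]
The new ingredient is that $\partial_x v$ lies in $H^1_0(\T)$ with $\|\partial_x v\|_{L^\infty}\le\|v\|_{H^2_0(\T)}$, again by \eqref{strongerpoincare}; combined with the $L^\infty$ control of $A'(v)$ and $A''(v)$ on $[-R,R]$ (continuity of $A'$ and $A''$), this gives boundedness on bounded sets. For the Lipschitz estimate I would telescope both terms as in the case $s=1$: the difference $(\partial_x v)^2-(\partial_x w)^2=(\partial_x v+\partial_x w)\,\partial_x(v-w)$ and the continuity of $A'$, $A''$ on compacts handle most contributions, while the term involving $A''(v)-A''(w)$ is precisely where I would invoke the hypothesis that $A''$ is \emph{locally Lipschitz}.

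The main difficulty is the intermediate range $s\in(1,2)$, where only boundedness on bounded sets is asserted, and here I would appeal to nonlinear interpolation. From the Fourier description \eqref{fractional}, the fractional spaces are weighted $\ell^2$ spaces, whence the real interpolation identities $H^s_0(\T)=(H^1_0(\T),H^2_0(\T))_{s-1,2}$ and $H^{s-1}_0(\T)=(L^2_0(\T),H^1_0(\T))_{s-1,2}$ hold with equivalent norms. The map $F:v\mapsto\partial_x A(v)$ is Lipschitz on bounded subsets from $H^1_0(\T)$ to $L^2_0(\T)$ and from $H^2_0(\T)$ to $H^1_0(\T)$ by the endpoint cases, with constants depending on the radius only through $\|v\|_{L^\infty}$, and hence, by \eqref{strongerpoincare}, polynomially in the $H^1_0(\T)$ norm. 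A nonlinear interpolation theorem of Tartar's type then upgrades these endpoint bounds to boundedness of $F$ on bounded subsets of $H^s_0(\T)$ into $H^{s-1}_0(\T)$. Concretely, for $v$ with $\|v\|_{H^s_0(\T)}\le R$ and each $t>0$, I would take an almost optimal splitting $v=v_0+v_1$ realizing the $K$-functional $K(t,v;H^1_0(\T),H^2_0(\T))$ and write $F(v)=(F(v)-F(v_1))+F(v_1)$, estimating the first summand in $L^2_0(\T)$ through the $s=1$ Lipschitz bound and the second in $H^1_0(\T)$ through the $s=2$ bound, so that $K(t,F(v);L^2_0(\T),H^1_0(\T))\le C(R)\,K(t,v;H^1_0(\T),H^2_0(\T))$; integrating against $t^{-2(s-1)}\,\mathrm{d}t/t$ then produces the desired $H^{s-1}_0(\T)$ estimate.

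The delicate point, and the one I would treat most carefully, is that the endpoint constants are only \emph{local}. Because the Lipschitz and boundedness constants grow with $\|v\|_{L^\infty}$, one must check along the splitting that the $H^1_0(\T)$ norms of $v$, $v_0$ and $v_1$ remain controlled by $R$; this follows from the embedding $H^2_0(\T)\hookrightarrow H^1_0(\T)$ and the monotonicity of the $K$-functional, and it is exactly what guarantees that the polynomial dependence of the constants does not destroy the convergence of the interpolation integral. Verifying this uniform control, together with the identification of the fractional Sobolev spaces as real interpolation spaces, is the crux of the proof for $s\in(1,2)$, whereas the endpoint cases are routine product and chain-rule computations.
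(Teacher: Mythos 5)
The paper does not actually prove this proposition: it is stated as an adaptation of \cite[Appendice A]{Bor12b} and its proof is deferred to Subsection~2.2.5 of \cite{Mar19}, so there is no in-paper argument to compare against. On its own merits, your proposal is correct and complete in outline. The two endpoint cases are exactly the routine product/chain-rule computations you describe, and you correctly locate the only place where the local Lipschitz continuity of $A''$ (rather than mere continuity) is used, namely the term $A''(v)-A''(w)$ in the $s=2$ Lipschitz estimate. For the intermediate range, your nonlinear interpolation argument goes through, but one point deserves to be made explicit to ensure the $K$-functional inequality is genuinely homogeneous: you need the $s=2$ endpoint bound in the \emph{linear} form $\|\partial_x A(v_1)\|_{H^1_0(\T)} \leq C\bigl(\|v_1\|_{H^1_0(\T)}\bigr)\,\|v_1\|_{H^2_0(\T)}$, which does follow from $\|A''(v_1)(\partial_x v_1)^2\|_{L^2} \leq \sup_{[-R',R']}|A''|\,\|\partial_x v_1\|_{L^\infty}\|\partial_x v_1\|_{L^2}$ with $\|\partial_x v_1\|_{L^\infty}\leq\|v_1\|_{H^2_0(\T)}$, and from the observation that a near-optimal splitting satisfies $\|v_0\|_{H^1_0(\T)}\leq 2K(t,v)\leq 2\|v\|_{H^1_0(\T)}\leq 2R$, hence $\|v_1\|_{H^1_0(\T)}\leq 3R$; this is precisely the uniform control you flag as the crux, and it closes. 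A lighter alternative for $s\in(1,2)$, avoiding nonlinear interpolation altogether, is to write $\partial_x A(v)=A'(v)\,\partial_x v$, note that $A'(v)\in H^1(\T)$ with norm controlled on bounded sets of $H^1_0(\T)$ (since $\partial_x(A'(v))=A''(v)\partial_x v$), and invoke the bilinear product estimate $H^1(\T)\times H^{\sigma}(\T)\to H^{\sigma}(\T)$ for $\sigma=s-1\in[0,1]$, itself obtained by (linear) interpolation between the trivial cases $\sigma=0$ and $\sigma=1$.
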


By virtue of Proposition \ref{localboundedness}, for all $m \geq 1$, we denote by $C_2^{(m)}$ and $C_3^{(m)}$ two finite constants such that:
\begin{itemize}
  \item for all $v \in H^1_0(\T)$ such that $\|v\|_{H^1_0(\T)} \leq m$, $\|\partial_x A(v)\|_{L^2_0(\T)} \leq C_2^{(m)}$;
  \item for all $v_1, v_2 \in H^1_0(\T)$ such that $\|v_1\|_{H^1_0(\T)} \vee \|v_2\|_{H^1_0(\T)} \leq m$, $\|\partial_x A(v_1)-\partial_x A(v_2)\|_{L^2_0(\T)} \leq C_3^{(m)}\|v_1-v_2\|_{H^1_0(\T)}$.
\end{itemize}

\subsubsection{Heat kernel}

Let us denote by $(S_t)_{t\geq0}$ the semigroup generated by the operator $\nu \partial_{xx}$:
\begin{equation}\label{semigroupdef}
S_t v := \sum_{m\geq1} \e^{\nu\lambda_m t} \langle v ,e_m\rangle_{L^2_0(\T)} e_m , \quad v\in L^2_0(\T), \quad t\geq0 .
\end{equation}
Some of its properties are gathered in the following proposition.

\begin{prop}[Properties of the heat kernel]\label{prop:heatkernel}
  The semigroup $(S_t)_{t \geq 0}$ satisfies the following properties.
  \begin{enumerate}
    \item For any $s \geq 0$, for any $v \in H^s_0(\T)$, for any $t \geq 0$, $S_t v \in H^s_0(\T)$ and $\|S_t v\|_{H^s_0(\T)} \leq \|v\|_{H^s_0(\T)}$; besides, the mapping $t \mapsto S_tv \in H^s_0(\T)$ is continuous on $[0,+\infty)$.
    \item For all $0 \leq s_1 \leq s_2$, there exists a constant $C_4=C_4(s_1,s_2)>0$ such that
\[ \forall v \in H^{s_1}_0(\T) , \quad \forall t \geq 0 , \qquad \| S_t v \|_{H^{s_2}_0(\T)} \leq C_4 t^{\frac{s_1-s_2}{2}} \| v \|_{H^{s_1}_0(\T)} . \]
    \item For any $s\in[0,+\infty)$, $T>0$ and $(v(t))_{t\in[0,T]} \in C( [0,T],H^s_0 (\T))$, the process $( \int_0^t S_{t-r}v(r)\dd r)_{t\in[0,T]}$ belongs to $C( [0,T],H^{s+3/2}_0(\T))$.
  \end{enumerate}
\end{prop}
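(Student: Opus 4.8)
The plan is to work entirely in the Fourier basis $(e_m)_{m\geq1}$, using the explicit diagonal action of $S_t$ from~\eqref{semigroupdef}, together with the spectral definition~\eqref{fractional} of the norm $\|\cdot\|_{H^s_0(\T)}$. Since $S_t e_m = \e^{\nu\lambda_m t} e_m$ and $\lambda_m < 0$, every factor $\e^{\nu\lambda_m t}$ lies in $(0,1]$, which is the engine behind all three statements.

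\medskip

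For statement (1), I would write, for $v\in H^s_0(\T)$,
\[
\|S_t v\|_{H^s_0(\T)}^2 = \sum_{m\geq1}(-\lambda_m)^s\,\e^{2\nu\lambda_m t}\,\langle v,e_m\rangle_{L^2_0(\T)}^2 \leq \sum_{m\geq1}(-\lambda_m)^s\,\langle v,e_m\rangle_{L^2_0(\T)}^2 = \|v\|_{H^s_0(\T)}^2,
\]
using $\e^{2\nu\lambda_m t}\leq1$; this simultaneously shows $S_t v\in H^s_0(\T)$ and the contraction bound. For continuity in $t$, I would fix $v$, observe that each term $(-\lambda_m)^s\,\e^{2\nu\lambda_m t}\langle v,e_m\rangle^2$ is continuous in $t$ and dominated uniformly (for $t\geq0$) by the summable sequence $(-\lambda_m)^s\langle v,e_m\rangle^2$, and invoke dominated convergence on the series to get continuity of $t\mapsto\|S_tv-S_{t_0}v\|_{H^s_0(\T)}^2$ at each $t_0$ (treating $\|S_tv-S_{t_0}v\|^2$ termwise the same way).

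\medskip

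For statement (2), the key is the elementary scalar estimate: for $0\leq s_1\leq s_2$ and $\mu>0$, the function $\mu\mapsto \mu^{s_2-s_1}\e^{-2\mu t}$ is bounded by $C\,t^{-(s_2-s_1)}$ (maximising over $\mu\geq0$ gives the constant $((s_2-s_1)/(2e))^{s_2-s_1}$ for $t>0$). Applying this with $\mu = -\nu\lambda_m t / t$ — more precisely writing $(-\lambda_m)^{s_2}\e^{2\nu\lambda_m t} = (-\lambda_m)^{s_1}\cdot(-\lambda_m)^{s_2-s_1}\e^{2\nu\lambda_m t}$ and bounding the last two factors — yields
\[
\|S_t v\|_{H^{s_2}_0(\T)}^2 = \sum_{m\geq1}(-\lambda_m)^{s_2}\e^{2\nu\lambda_m t}\langle v,e_m\rangle^2 \leq C_4^2\,t^{s_1-s_2}\sum_{m\geq1}(-\lambda_m)^{s_1}\langle v,e_m\rangle^2,
\]
with $C_4$ depending only on $s_1,s_2,\nu$; taking square roots gives the claim.

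\medskip

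Statement (3) is the main obstacle, since it asserts a genuine gain of $3/2$ derivatives for the time-convolution, and the naive application of (2) inside the integral produces a non-integrable singularity. I would estimate
\[
\left\|\int_0^t S_{t-r}v(r)\,\dd r\right\|_{H^{s+3/2}_0(\T)} \leq \int_0^t \|S_{t-r}v(r)\|_{H^{s+3/2}_0(\T)}\,\dd r \leq C_4\int_0^t (t-r)^{-3/4}\,\|v(r)\|_{H^s_0(\T)}\,\dd r,
\]
using statement (2) with $s_1=s$, $s_2=s+3/2$; the exponent $-3/4 = (s-(s+3/2))/2$ is integrable near $r=t$, so the right-hand side is finite and controlled by $\sup_{[0,T]}\|v(r)\|_{H^s_0(\T)}$. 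This shows the integral lands in $H^{s+3/2}_0(\T)$; for the continuity in $t$ I would split the integral around the moving endpoint, write the difference of two such convolutions as a term involving $\int$ of $(S_{t'-r}-S_{t-r})v(r)$ plus a boundary term over the short interval between $t$ and $t'$, and control each using the contraction property (1), the integrable-singularity bound above, and the uniform continuity of $r\mapsto v(r)$ on the compact $[0,T]$. The care needed to make the endpoint splitting rigorous — ensuring the gain of $3/2$ rather than only $1$ derivative and handling the continuity uniformly — is where the real work lies.
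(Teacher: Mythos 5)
Your proof is correct; note that the paper itself omits the argument for Proposition~\ref{prop:heatkernel}, deferring to Subsection~2.2.5 of~\cite{Mar19}, where the same diagonal computation in the eigenbasis $(e_m)$ is carried out. Your treatment of the three points --- the contraction bound $\e^{2\nu\lambda_m t}\leq 1$ together with dominated convergence for (1), the elementary maximisation $x^{a}\e^{-x}\leq (a/\e)^{a}$ for (2), and the integrable singularity $(t-r)^{-3/4}$ combined with the endpoint splitting and the uniform smallness of $(S_{t'-t}-I)v(r)$ over the compact range of $v$ for (3) --- is exactly the standard route and contains no gaps.
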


\subsubsection{Stochastic convolution and mild formulation of \eqref{SSCL}}

Let $(\bar{\mathcal{F}}_t)_{t \geq 0}$ be a normal filtration on the probability space $(\Omega, \mathcal{F}, \P)$ and $(\bar{W}^Q(t))_{t \geq 0}$ be a $Q$-Wiener process in $H^2_0(\T)$ with respect to this filtration. Given that the orthonormal basis $(e_m)_{m\geq1}$ of the space $L^2_0(\T)$ satisfies $\partial_{xx} e_m = \lambda_m e_m$, the family $(e_m/\lambda_m)_{m\geq1}$ is an orthonormal basis of $H^2_0(\T)$.
We set
\[ \bar{W}_m(t) := \left\langle \bar{W}^Q(t) , \frac{e_m}{\lambda_m}\right\rangle_{H^2_0(\T)} , \qquad  m\geq1 , \quad t\geq0 , \]
so that by \eqref{covariance}, $(\bar{W}_m(t))_{t\geq0}$ is a real-valued Brownian motion with variance $\sum_{k \geq 1} \langle g_k, e_m/\lambda_m\rangle_{H^2_0(\T)}^2$. Next, we write
\[ \bar{w}_m(t) := \int_0^t \e^{\nu\lambda_m(t-s)} \dd \bar{W}_m(s) , \qquad m\geq1, \quad t\geq0 . \]

\begin{prop}\label{continuity}
Under Assumption \ref{ass:g}, for all $T>0$, the series \[ \sum_{m\geq1} \frac{e_m}{\lambda_m}(\bar{w}_m(t))_{t \in [0,T]} \] converges in $L^2 ( \Omega , C( [0,T],H^2_0(\T)))$, and its sum defines an $(\bar{\mathcal{F}}_t)_{t\geq0}$-adapted, $H^2_0(\T)$-valued process $(\bar{w}(t))_{t \geq 0}$ almost surely continuous.
\end{prop}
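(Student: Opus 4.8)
The plan is to recognise the series as the stochastic convolution $\bar w(t) = \int_0^t S_{t-s}\,\dd \bar W^Q(s)$ and to establish its regularity through the factorization method. First I would record the one summability fact that carries the whole argument: since $(e_m/\lambda_m)_{m\geq1}$ is an orthonormal basis of $H^2_0(\T)$, setting $\sigma_m^2 := \sum_{k\geq1}\langle g_k, e_m/\lambda_m\rangle_{H^2_0(\T)}^2$ for the variance rate of $\bar W_m$, Parseval's identity gives $\sum_{m\geq1}\sigma_m^2 = \sum_{k\geq1}\|g_k\|_{H^2_0(\T)}^2 = D_0 < +\infty$ by Assumption~\ref{ass:g}. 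This is the only place where the noise assumption enters quantitatively.

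Next I would fix $\alpha\in(0,1/2)$ and introduce the auxiliary process $Y_\alpha(r) := \int_0^r (r-s)^{-\alpha}S_{r-s}\,\dd \bar W^Q(s)$, whose component along $e_m/\lambda_m$ is $\int_0^r (r-s)^{-\alpha}\e^{\nu\lambda_m(r-s)}\,\dd \bar W_m(s)$. By the It\^o isometry, and crucially because cross-terms do not contribute to a single second moment (so the correlations between the $\bar W_m$ are irrelevant here), I would compute, after the substitution $u=r-s$,
\[
\E\|Y_\alpha(r)\|_{H^2_0(\T)}^2 = \sum_{m\geq1}\sigma_m^2\int_0^r u^{-2\alpha}\e^{2\nu\lambda_m u}\,\dd u \leq \Gamma(1-2\alpha)(2\nu)^{2\alpha-1}\sum_{m\geq1}\sigma_m^2|\lambda_m|^{2\alpha-1},
\]
which is finite and bounded uniformly in $r\in[0,T]$: the condition $1-2\alpha>0$ makes the Gamma integral converge, while $|\lambda_m|\geq(2\pi)^2$ keeps $|\lambda_m|^{2\alpha-1}$ bounded. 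Since $Y_\alpha(r)$ is Gaussian in $H^2_0(\T)$, all its moments are comparable to this second moment, so for any $p\geq2$ one has $Y_\alpha\in L^p(\Omega\times[0,T], H^2_0(\T))$, and in particular $Y_\alpha\in L^p([0,T], H^2_0(\T))$ almost surely.

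I would then invoke the factorization identity $\int_s^t (t-r)^{\alpha-1}(r-s)^{-\alpha}\,\dd r = \pi/\sin(\pi\alpha)$, which yields $\bar w(t) = \frac{\sin(\pi\alpha)}{\pi}\int_0^t (t-r)^{\alpha-1}S_{t-r}Y_\alpha(r)\,\dd r =: c_\alpha R_\alpha Y_\alpha(t)$. Choosing $p>1/\alpha$ (for instance $\alpha=1/4$, $p=5$) and using the contraction bound $\|S_t\|\leq1$ from Proposition~\ref{prop:heatkernel}(1) together with H\"older's inequality — the exponent condition $\alpha p>1$ is exactly what makes $\int_0^t(t-r)^{(\alpha-1)p/(p-1)}\dd r$ finite — the operator $R_\alpha$ maps $L^p([0,T], H^2_0(\T))$ boundedly into $C([0,T], H^2_0(\T))$, with strong continuity of $S_t$ upgrading the image to a continuous function. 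Hence almost surely $\bar w\in C([0,T], H^2_0(\T))$.

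Finally, for convergence of the series in $L^2(\Omega, C([0,T], H^2_0(\T)))$, I would apply the same estimates to the tails: writing $Y_\alpha^{\geq M}$ for the truncation retaining only $m\geq M$, boundedness of $R_\alpha$ gives $\E\sup_{t\in[0,T]}\|\bar w - \bar w^{(M)}\|_{H^2_0(\T)}^2 \leq c_\alpha^2\|R_\alpha\|^2\,\E\|Y_\alpha^{\geq M}\|_{L^p([0,T],H^2_0(\T))}^2$ for the partial sum $\bar w^{(M)}$; chaining Jensen's inequality (valid since $2/p<1$) with the Gaussian moment comparison bounds the right-hand side by a constant times $\sum_{m\geq M}\sigma_m^2$, which vanishes as $M\to\infty$ by the summability established above. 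Adaptedness of the limit is immediate, since each partial sum is $(\bar{\mathcal F}_t)$-adapted and this property passes to the $L^2(\Omega,C)$ limit. The main obstacle is precisely the $C([0,T],\cdot)$-norm, namely the supremum over time: a naive Doob maximal inequality fails because taking the supremum decouples the decaying factor $\e^{\nu\lambda_m t}$ from the exponentially growing martingale $\int_0^t \e^{-\nu\lambda_m s}\dd \bar W_m(s)$, and the factorization method circumvents this by trading the supremum for an $L^p$-in-time integral of the better-behaved process $Y_\alpha$.
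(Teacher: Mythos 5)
Your proof is correct. Note that the paper itself does not prove Proposition~\ref{continuity}: it is listed among the results whose proofs are omitted and deferred to Subsection~2.2.5 of~\cite{Mar19}, so there is no in-paper argument to compare against. What you give is the standard factorization-method proof of continuity of the stochastic convolution (Da Prato--Zabczyk), and all the key points are in place: the summability $\sum_m \sigma_m^2 = D_0$ via Parseval in the orthonormal basis $(e_m/\lambda_m)_{m\geq1}$ of $H^2_0(\T)$, the diagonal It\^o-isometry computation for $\E\|Y_\alpha(r)\|_{H^2_0(\T)}^2$ (correctly observing that possible correlations between the $\bar W_m$ are invisible in a sum of squared coordinates), the exponent bookkeeping $\alpha<1/2$ and $\alpha p>1$, the Gaussian moment equivalence, and the tail estimate that upgrades the conclusion from mere continuity of $\bar w$ to convergence of the partial sums in $L^2(\Omega, C([0,T],H^2_0(\T)))$. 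Two steps are invoked implicitly and deserve a line each if you write this out in full: the identity $\bar w = c_\alpha R_\alpha Y_\alpha$ requires the stochastic Fubini theorem (the integrability hypothesis is supplied by your uniform bound on $\E\|Y_\alpha(r)\|^2_{H^2_0(\T)}$), and the fact that $R_\alpha$ lands in $C([0,T],H^2_0(\T))$ rather than merely $L^\infty$ needs the short approximation argument combining the H\"older bound with the strong continuity of $(S_t)_{t\geq0}$ from Proposition~\ref{prop:heatkernel}. An alternative, shorter route would be to observe that $Q$ is trace class on $H^2_0(\T)$ and $(S_t)_{t\geq0}$ is a contraction semigroup there, and to cite the maximal inequality for stochastic convolutions with contraction semigroups; but your self-contained argument is sound and your closing remark correctly identifies why a naive Doob bound on each mode fails.
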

The process $(\bar{w}(t))_{t \geq 0}$ is called the \textit{stochastic convolution} associated to the $Q$-Wiener process $(\bar{W}^Q(t))_{t \geq 0}$.

In the sequel, we let $\bar{\tau}$ be a $(\bar{\mathcal{F}}_t)_{t \geq 0}$-stopping time, almost surely finite. We shall say that a process $(\bar{u}(t))_{t \in [0,\bar{\tau}]}$ is $(\bar{\mathcal{F}}_t)_{t \geq 0}$-adapted if for all $t \geq 0$, the random variable $\bar{u}(t)\mathbf{1}_{t \leq \bar{\tau}}$ is $\bar{\mathcal{F}}_t$-measurable.

\begin{definition}[Local mild solution]\label{defi:sol}
  Let $\bar{u}_0$ be an $\bar{\mathcal{F}}_0$-measurable, $H^1_0(\T)$-valued random variable. Under Assumptions \ref{ass:A} and \ref{ass:g}, a (local) mild solution to the SPDE
  \begin{equation}\label{eq:edpsbar}
    \dd \bar{u}(t) = -\partial_x A(\bar{u}(t))\dd t + \nu \partial_{xx} \bar{u}(t)\dd t + \dd \bar{W}^Q(t)
  \end{equation}
  on $[0,\bar{\tau}]$ is an $H^1_0(\T)$-valued, $(\bar{\mathcal{F}}_t)_{t \geq 0}$-adapted process $(\bar{u}(t))_{t \in [0,\bar{\tau}]}$ such that, almost surely:
  \begin{enumerate}
    \item the mapping $t \mapsto \bar{u}(t) \in H^1_0(\T)$ is continuous on $[0,\bar{\tau}]$;
    \item for all $t \in [0,\bar{\tau}]$,
    \begin{equation}\label{solution}
      \bar{u}(t) = S_t \bar{u}_0 - \int_0^t S_{t-s} \partial_x A(\bar{u}(s)) \dd s + \bar{w}(t) .
    \end{equation} 
  \end{enumerate}
\end{definition}
The combination of Propositions \ref{localboundedness} and \ref{prop:heatkernel} ensures that all terms of the identity \eqref{solution} are well-defined.

We now clarify the relationship between the notions of mild and strong solutions.

\begin{prop}[Mild and strong solutions]\label{prop:mild-strong}
  Under the assumptions of Definition \ref{defi:sol}, let $(\bar{u}(t))_{t \in [0,\bar{\tau}]}$ be a mild solution to \eqref{eq:edpsbar} on $[0,\bar{\tau}]$. If $\bar{u}_0 \in H^2_0(\T)$, then:
  \begin{enumerate}
    \item for all $t \in [0,\bar{\tau}]$, $\bar{u}(t) \in H^2_0(\T)$ and the mapping $t \mapsto \bar{u}(t) \in H^2_0(\T)$ is continuous on $[0,\bar{\tau}]$;
    \item for all $t \in [0,\bar{\tau}]$,
  \begin{equation*}
    \bar{u}(t) = \bar{u}_0 + \int_0^t \left( - \partial_x A \left( \bar{u}(s) \right) + \nu \partial_{xx} \bar{u}(s) \right) \dd s + \bar{W}^Q(t).
  \end{equation*}
  \end{enumerate}
  Conversely, any $H^2_0(\T)$-valued, $(\bar{\mathcal{F}}_t)_{t \geq 0}$-adapted process $(\bar{u}(t))_{t \in [0,\bar{\tau}]}$ satisfying these two conditions almost surely is a mild solution to \eqref{eq:edpsbar} on $[0,\bar{\tau}]$.
\end{prop}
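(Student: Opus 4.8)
My plan is to prove the two implications separately, deriving the direct one from a regularity bootstrap and the converse from an integration-by-parts identity in $H^2_0(\T)$. For point (1) of the direct implication, I would raise the spatial regularity of the mild solution from $H^1_0(\T)$ to $H^2_0(\T)$ in two steps, iterating Propositions \ref{localboundedness} and \ref{prop:heatkernel}. Starting from $\bar u \in C([0,\bar\tau],H^1_0(\T))$, Proposition \ref{localboundedness} yields $s \mapsto \partial_x A(\bar u(s)) \in C([0,\bar\tau],H^0_0(\T))$, whence Proposition \ref{prop:heatkernel}(3) places the convolution $\int_0^t S_{t-s}\partial_x A(\bar u(s))\dd s$ in $C([0,\bar\tau],H^{3/2}_0(\T))$. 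Combined with $S_\cdot \bar u_0 \in C([0,\bar\tau],H^2_0(\T))$ (Proposition \ref{prop:heatkernel}(1), since $\bar u_0 \in H^2_0(\T)$) and $\bar w \in C([0,\bar\tau],H^2_0(\T))$ (Proposition \ref{continuity}), the mild identity \eqref{solution} forces $\bar u \in C([0,\bar\tau],H^{3/2}_0(\T))$. A second pass with $s = 3/2$ gives $\partial_x A(\bar u) \in C([0,\bar\tau],H^{1/2}_0(\T))$ and hence a convolution term in $C([0,\bar\tau],H^2_0(\T))$, so that $\bar u \in C([0,\bar\tau],H^2_0(\T))$; the continuity of the composed nonlinearity in the intermediate fractional space is of the same routine nature as Proposition \ref{localboundedness} and would be invoked as such.

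For point (2), I would split $\bar u(t)$ along \eqref{solution} into the orbit $S_t\bar u_0$, the deterministic convolution $y(t) := \int_0^t S_{t-s}\partial_x A(\bar u(s))\dd s$, and the stochastic convolution $\bar w(t)$, and apply to each the classical passage from its mild to its strong form, which is legitimate now that every term is $H^2_0(\T)$-valued and continuous. Since $\bar u_0$ lies in the generator's domain $H^2_0(\T)$, one has $S_t\bar u_0 = \bar u_0 + \nu\int_0^t \partial_{xx}S_s\bar u_0\dd s$; the regularity of $y$ and the $L^2_0(\T)$-continuity of its forcing give $y(t) = \int_0^t(\nu\partial_{xx}y(s) + \partial_x A(\bar u(s)))\dd s$; and $\bar w$ satisfies $\bar w(t) = \bar W^Q(t) + \nu\int_0^t \partial_{xx}\bar w(s)\dd s$, the strong form of the linear equation it solves. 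Recombining $S_\cdot\bar u_0 - y + \bar w = \bar u$ inside the viscous integral, the three identities sum to the desired strong formulation.

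For the converse, I would fix $t$ and differentiate the auxiliary process $s \mapsto S_{t-s}\bar u(s)$ on $[0,t]$ via the It\^o product rule. Because $\partial_s S_{t-s} = -\nu\partial_{xx}S_{t-s}$ and $\bar u(s) \in H^2_0(\T)$, the two viscous contributions cancel, leaving $\dd(S_{t-s}\bar u(s)) = -S_{t-s}\partial_x A(\bar u(s))\dd s + S_{t-s}\dd\bar W^Q(s)$; integrating over $[0,t]$ returns exactly \eqref{solution}, while the required $H^1_0(\T)$-continuity follows from the assumed $H^2_0(\T)$-continuity.

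I expect the genuine difficulty to lie not in the algebra but in the rigorous justification of these infinite-dimensional manipulations: the differentiation of $S_{t-s}\bar u(s)$ against the stochastic integrand, equivalently the identification of the stochastic convolution $\bar w$ with the strong solution of its linear equation, rests on a stochastic Fubini argument and on $\bar u$ taking values in $D(\nu\partial_{xx}) = H^2_0(\T)$. These are classical once the $H^2_0(\T)$-regularity above is secured (see \cite[Chapter 5]{DZ92}), so the real content of the proof is the bootstrap rather than the calculus.
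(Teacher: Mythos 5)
The paper does not actually give a proof of Proposition \ref{prop:mild-strong}: it is listed among the results whose proofs are omitted and deferred to Subsection~2.2.5 of \cite{Mar19}, so there is no in-text argument to compare against. That said, your proposal is the standard route and is essentially correct: the two-step bootstrap $H^1_0(\T)\to H^{3/2}_0(\T)\to H^2_0(\T)$ through the mild identity, the term-by-term passage to the strong form once every term is $H^2_0(\T)$-valued, and the computation on $s\mapsto S_{t-s}\bar u(s)$ for the converse are exactly what is expected here. The one step that deserves more than a wave of the hand is the second bootstrap pass: Proposition \ref{prop:heatkernel}(3) requires $s\mapsto\partial_x A(\bar u(s))$ to be \emph{continuous} into $H^{1/2}_0(\T)$, whereas Proposition \ref{localboundedness} asserts Lipschitz continuity on bounded sets only for $s=1$ and $s=2$ and mere boundedness for intermediate $s$; you flag this yourself as routine, which is fair, but if you prefer not to prove a fractional continuity lemma you can bypass it by running the second pass through Proposition \ref{prop:heatkernel}(2) instead: the estimate $\| S_{t-s}\partial_x A(\bar u(s))\|_{H^2_0(\T)}\leq C_4 (t-s)^{-3/4}\|\partial_x A(\bar u(s))\|_{H^{1/2}_0(\T)}$ has an integrable singularity, so boundedness in $H^{1/2}_0(\T)$ combined with the already established continuity of $s\mapsto\partial_x A(\bar u(s))$ in $L^2_0(\T)$ yields the convolution in $C([0,\bar\tau],H^2_0(\T))$ by a direct computation. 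The remaining ingredients you invoke (identification of $\bar w$ and of the deterministic convolution with strong solutions of their linear equations via stochastic Fubini, and a Galerkin or Yosida justification of the product rule in the converse) are indeed classical once the $H^2_0(\T)$-regularity is in hand.
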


\subsubsection{Existence and uniqueness of a mild solution on a small interval} For any integer $\bar{m}_0 \geq 0$, let us define
\begin{equation*}
  \tau_{\bar{m}_0}\left(\bar{W}^Q\right) = \frac{1}{8\left(C_1C_3^{(\bar{m}_0+1)}\right)^2} \wedge \inf\left\{t \geq 0: 2C_4C_2^{(\bar{m}_0+1)}\sqrt{t} + \|\bar{w}(t)\|_{H^1_0(\T)} \geq 1\right\},
\end{equation*}
where we recall that the constant $C_4$ is defined in Proposition \ref{prop:heatkernel}, the constants $C_2^{(m)}$ and $C_3^{(m)}$ are defined after Proposition \ref{localboundedness}, and the constant $C_1$ is defined in \eqref{PGA}.

Notice that $\tau_{\bar{m}_0}(\bar{W}^Q) \in (0,+\infty)$, almost surely.

In the spirit of \cite{DDT94,Bor12b}, we obtain the existence and uniqueness of a mild solution to \eqref{eq:edpsbar} on the "small" interval $[0,\tau_{\bar{m}_0}(\bar{W}^Q)]$ by a fixed-point argument.

\begin{lemma}[Local existence and uniqueness]\label{localWP}
Let $\bar{u}_0$ and $\bar{m}_0$ be two $\bar{\mathcal{F}}_0$-measurable random variables taking values respectively in $H^1_0(\T)$ and $\N$ such that $\| \bar{u}_0 \|_{H^1_0(\T)}\leq \bar{m}_0$. Furthermore, let us set $\bar{\tau} := \tau_{\bar{m}_0}(\bar{W}^Q)$. Then, under Assumptions \ref{ass:A} and \ref{ass:g}, there is a unique mild solution $(\bar{u}(t))_{t \in [0,\bar{\tau}]}$ to \eqref{eq:edpsbar} on $[0,\bar{\tau}]$.
\end{lemma}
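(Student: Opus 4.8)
The plan is to construct the solution by a Banach fixed-point (Picard iteration) argument applied to the integral operator associated with the mild formulation \eqref{solution}. For an $(\bar{\mathcal{F}}_t)_{t\geq0}$-adapted, a.s.\ continuous $H^1_0(\T)$-valued process $(\bar{u}(t))_{t\in[0,\bar\tau]}$, define
\[
\Phi(\bar u)(t) := S_t \bar u_0 - \int_0^t S_{t-s}\partial_x A(\bar u(s))\dd s + \bar w(t),
\]
so that mild solutions on $[0,\bar\tau]$ are exactly the fixed points of $\Phi$. Since the constants $C_2^{(\bar m_0+1)}$ and $C_3^{(\bar m_0+1)}$ and the stopping time $\bar\tau$ depend on the $\bar{\mathcal{F}}_0$-measurable integer $\bar m_0$, I would first condition on each event $\{\bar m_0 = m_0\}\in\bar{\mathcal{F}}_0$, so that these constants become deterministic; it then suffices to treat a fixed integer $m_0$ with $\|\bar u_0\|_{H^1_0(\T)}\le m_0$, the random stopping time $\bar\tau$ and the random datum being carried along pathwise.

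For stability of a ball, I would run the iteration $\bar u^{(0)}(t):=S_t\bar u_0+\bar w(t)$, $\bar u^{(n+1)}:=\Phi(\bar u^{(n)})$, and show by induction that each iterate is adapted, a.s.\ continuous in $H^1_0(\T)$, and satisfies $\sup_{t\in[0,\bar\tau]}\|\bar u^{(n)}(t)\|_{H^1_0(\T)}\le m_0+1$. Adaptedness is inherited at each step because $\bar u_0$ is $\bar{\mathcal{F}}_0$-measurable, $\bar w$ is adapted by Proposition \ref{continuity}, and the Bochner integral of an adapted integrand is adapted; continuity follows from Propositions \ref{prop:heatkernel} and \ref{localboundedness}. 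For the norm bound I would estimate the three terms separately: $\|S_t\bar u_0\|_{H^1_0(\T)}\le m_0$ by Proposition \ref{prop:heatkernel}(1); for the drift, Proposition \ref{prop:heatkernel}(2) with $s_1=0$, $s_2=1$ together with $\|\partial_x A(\bar u^{(n)}(s))\|_{L^2_0(\T)}\le C_2^{(m_0+1)}$ gives
\[
\left\|\int_0^t S_{t-s}\partial_x A(\bar u^{(n)}(s))\dd s\right\|_{H^1_0(\T)} \le C_4 C_2^{(m_0+1)}\int_0^t (t-s)^{-1/2}\dd s = 2 C_4 C_2^{(m_0+1)}\sqrt t;
\]
and $\|\bar w(t)\|_{H^1_0(\T)}$ is controlled by the very definition of $\bar\tau$, since for $t\le\bar\tau$ one has $2C_4 C_2^{(m_0+1)}\sqrt t+\|\bar w(t)\|_{H^1_0(\T)}\le 1$. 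The three contributions then sum to at most $m_0+1$, closing the induction.

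For the contraction, I would subtract the representations of two iterates in the ball, the initial and noise terms cancelling, and bound the difference of drifts with the local Lipschitz constant $C_3^{(m_0+1)}$ of Proposition \ref{localboundedness}:
\[
\|\Phi(\bar u_1)(t) - \Phi(\bar u_2)(t)\|_{H^1_0(\T)} \le C_4 C_3^{(m_0+1)}\int_0^t (t-s)^{-1/2}\dd s \,\sup_{r\in[0,\bar\tau]}\|\bar u_1(r) - \bar u_2(r)\|_{H^1_0(\T)}.
\]
The integrable singular kernel produces a factor $2C_4 C_3^{(m_0+1)}\sqrt t$, and the first term in the definition of $\bar\tau$ is chosen precisely so that $2C_4 C_3^{(m_0+1)}\sqrt{\bar\tau}$ is bounded by a deterministic constant strictly smaller than one. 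Hence $\Phi$ is a uniform contraction on the ball, the Picard sequence converges a.s.\ uniformly on $[0,\bar\tau]$ to an adapted, a.s.\ continuous limit $\bar u$, the unique fixed point of $\Phi$ in the ball and therefore a mild solution.

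To upgrade uniqueness beyond the ball, I would take any two mild solutions $\bar v_1,\bar v_2$ with the same data; by a.s.\ continuity $R:=\sup_{t\in[0,\bar\tau]}(\|\bar v_1(t)\|_{H^1_0(\T)}\vee\|\bar v_2(t)\|_{H^1_0(\T)})$ is a.s.\ finite, and with a random integer $m\ge R$ the same estimate gives $\phi(t):=\|\bar v_1(t)-\bar v_2(t)\|_{H^1_0(\T)}\le C_4 C_3^{(m)}\int_0^t (t-s)^{-1/2}\phi(s)\dd s$. Iterating this inequality once turns the singular kernel into a bounded one (the inner integral $\int_r^t (t-s)^{-1/2}(s-r)^{-1/2}\dd s$ equals the constant $B(1/2,1/2)$), after which the classical Gronwall lemma forces $\phi\equiv0$; collecting the conditioning over $m_0\in\N$ gives the statement on all of $\Omega$. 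The main obstacle I anticipate is the bookkeeping of the singular kernel $(t-s)^{-1/2}$ from Proposition \ref{prop:heatkernel}(2): its integrability is exactly what yields the $\sqrt t$ gains making $\bar\tau$ effective and what forces a singular rather than classical Gronwall argument for uniqueness. A secondary point is ensuring that the random stopping time and the $\bar m_0$-dependent constants still produce a contraction factor that is deterministically below one.
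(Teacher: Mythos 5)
Your construction of the solution is essentially the paper's: the same integral operator on the ball of radius $\bar m_0+1$ in $C([0,\bar\tau],H^1_0(\T))$, the same three-term estimate showing the ball is stable (using that $2C_4C_2^{(\bar m_0+1)}\sqrt t+\|\bar w(t)\|_{H^1_0(\T)}\le 1$ up to $\bar\tau$), the same contraction estimate via the integrable kernel $(t-s)^{-1/2}$ and the first term in the definition of $\bar\tau$, and the same device of passing adaptedness to the limit along the Picard iterates. The one place you genuinely diverge is uniqueness among all continuous $H^1_0(\T)$-valued solutions, not just those staying in the ball: the paper argues by contradiction, comparing a putative second solution with the fixed point up to the first time it reaches norm $\bar m_0+1$ and invoking uniqueness of the fixed point on that shorter interval; you instead bound both solutions pathwise by a random integer $m$, write the difference inequality with the singular kernel, iterate once to get the Beta-function constant $\pi$, and conclude by the classical Gr\"onwall lemma. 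Your route is valid and arguably cleaner — it avoids the slightly delicate ``formally a subset'' step in the paper — and it is exactly the technique the paper itself deploys later in the proof of Lemma \ref{cd}; the paper's exit-time argument has the marginal advantage of staying entirely inside the Banach fixed-point framework without introducing a second Gr\"onwall step. No gaps.
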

\begin{proof}
Let us introduce the random set
\[ \Sigma := \left\{ \left(v(t)\right)_{t\in[0,\bar{\tau}]} \in C\left([0,\bar{\tau}],H^1_0(\T)\right) : \forall t \in [0,\bar{\tau}], \|v(t)\|_{H^1_0(\T)} \leq \bar{m}_0+ 1\right\}. \]
Thanks to Propositions \ref{prop:heatkernel} and \ref{continuity}, we may define the random operator $G : C([0,\bar{\tau}],H^1_0(\T)) \to C([0,\bar{\tau}],H^1_0(\T))$ by
\[ (Gv)(t) = S_t \bar{u}_0 - \int_0^t S_{t-s} \partial_x A(v(s))\dd s + \bar{w}(t) , \qquad t \in[0,\bar{\tau}] , \]
and notice that any $v \in C([0,\bar{\tau}],H^1_0(\T))$ satisfies Equation \eqref{solution} if and only if $Gv=v$.

We first write, for some $v \in C([0,\bar{\tau}],H^1_0(\T))$ and for any $t\in[0,\bar{\tau}]$,
\begin{equation}
\|(Gv)(t)\|_{H^1_0(\T)} \leq \|S_t \bar{u}_0\|_{H^1_0(\T)} + \int_0^t \left\|S_{t-s} \partial_x A(v(s))\right\|_{H^1_0(\T)}\dd s + \| \bar{w}(t) \|_{H^1_0(\T)} .
\end{equation}
  
On the one hand, by the first assertion of Proposition \ref{prop:heatkernel}, $\|S_t \bar{u}_0\|_{H^1_0(\T)} \leq \| \bar{u}_0\|_{H^1_0(\T)} \leq \bar{m}_0$; on the other hand, we know thanks to the second assertion of Proposition \ref{prop:heatkernel} that
\begin{equation}
\left\|S_{t-s} \partial_x A(v(s))\right\|_{H^1_0(\T)} \leq \frac{C_4}{\sqrt{t-s}}\|\partial_x A(v(s))\|_{L^2_0(\T)} ,
\end{equation}
furthermore, thanks to Proposition \ref{localboundedness}, if $v \in \Sigma$, then $\partial_x A(v)$ is bounded in $L^2_0(\T)$ uniformly in time, \textit{i.e.} for all $s \in [0,\bar{\tau}]$, $\| \partial_xA(v(s))\|_{L^2_0(\T)} \leq C_2^{(\bar{m}_0+1)}$. Thus,
\begin{equation}
\|(Gv)(t)\|_{H^1_0(\T)} \leq \bar{m}_0 + 2C_4C_2^{(\bar{m}_0+1)} \sqrt{t} + \| \bar{w}(t) \|_{H^1_0(\T)} , \qquad t\in[0,\bar{\tau}].
\end{equation}
By definition of $\bar{\tau}$, it follows that $Gv \in \Sigma$ whenever $v \in \Sigma$.

We now take $(v_1(t))_{t\in\left[0,\bar{\tau}\right]},(v_2(t))_{t\in\left[0,\bar{\tau}\right]} \in \Sigma$. Then, for any $t \in \left[0,\bar{\tau}\right]$,
\begin{equation}
\begin{aligned}
\|(Gv_1)(t)-(Gv_2)(t)\|_{H^1_0(\T)} &= \left\|\int_0^t S_{t-s}\left(\partial_x A(v_1(s))-\partial_x A(v_2(s))\right)\dd s\right\|_{H^1_0(\T)}\\
& \leq \int_0^t \frac{C_4}{\sqrt{t-s}} \|\partial_xA(v_1(s))-\partial_xA(v_2(s))\|_{L^2_0(\T)}\dd s,
\end{aligned}
\end{equation}
where we have used the same arguments as above. Using now the Lipschitz continuity result in Proposition \ref{localboundedness} and the definition of $\bar{\tau}$, we get for all $t \in \left[0,\bar{\tau}\right]$,
\begin{align*}
\|(Gv_1)(t)-(Gv_2)(t)\|_{H^1_0(\T)} &\leq 2C_1C_3^{(\bar{m}_0+1)} \sqrt{t} \sup_{s \in [0,t]} \| v_1(s)-v_2(s) \|_{H^1_0(\T)} \\
&\leq \frac12 \sup_{s\in[0,\bar{\tau}]} \| v_1(s)-v_2(s) \|_{H^1_0(\T)} ,
\end{align*}
meaning that $G$ is a contraction mapping on $\Sigma$, which is complete. Then, by the Banach fixed-point theorem, $G$ admits a unique fixed point $(\bar{u}(t))_{t\in\left[0,\bar{\tau}\right]}$ in $\Sigma$. To show that this solution to Equation \eqref{solution} is unique among all the $H^1_0(\T)$-valued continuous processes, let us first notice that our choice of $\bar{\tau}$ implies
\[ \forall t < \bar{\tau}, \qquad \|\bar{u}(t)\|_{H^1_0(\T)} < \bar{m}_0+1.\]
Assume that there is another solution $(\tilde u(t))_{t\in\left[0,\bar{\tau}\right]}$ of \eqref{solution} not belonging almost surely to $\Sigma$. Then we have with positive probability
\[ \exists \tilde{\tau} < \bar{\tau}, \qquad \|\tilde{u}(\tilde\tau)\|_{H^1_0(\T)} \geq \bar{m}_0+1.\]
This means that the double inequality $\|\bar{u}\left(\tilde{\tau}\right)\|_{H^1_0(\T)} < \bar{m}_0+1 \leq \|\tilde u\left(\tilde{\tau}\right)\|_{H^1_0(\T)}$ holds on some non-negligible event. On this event, the fixed-point argument also holds in the set
\[ \tilde\Sigma := \left\{ (v(t))_{t\in[0,\tilde{\tau}]} : \forall t \in [0,\tilde{\tau}] , \|v(t)\|_{H^1_0(\T)} \leq \bar{m}_0+1 \right\} \]
which is formally a subset of $\Sigma$. Thus, by uniqueness of the fixed point, we have $\bar{u}_{|\left[0,\tilde{\tau}\right]} = \tilde u_{|\left[0,\tilde{\tau}\right]}$ and in particular $\bar{u}\left(\tilde{\tau}\right)=\tilde u\left(\tilde{\tau}\right)$, which is absurd. As a consequence, $(\bar{u}(t))_{t\in\left[0,\bar{\tau}\right]}$ is the only $H^1_0(\T)$-valued process with continuous trajectories satisfying Equation \eqref{solution} on $[0,\bar{\tau}]$.

Finally, let $v^{(0)} = 0$ and define the sequence of processes $v^{(j)} \in C([0,\bar{\tau}],H^1_0(\T))$, $j \geq 1$ by $v^{(j)} = Gv^{(j-1)}$. It is clear from the definition of the operator $G$ and from Proposition \ref{continuity} that each process $( v^{(j)}(t) \ind{t\leq\bar{\tau}} )_{t\geq0}$ is $(\bar{\mathcal{F}}_t)_{t \geq0}$-adapted. On the other hand, the Banach fixed-point theorem asserts that almost surely, the sequence $(v^{(j)}(t))_{t \in [0,\bar{\tau}]}$ converges to $(\bar{u}(t))_{t \in [0, \bar{\tau}]}$ in $C([0,\bar{\tau}],H^1_0(\T))$. As a consequence, for any $t \geq 0$, the sequence of $\bar{\mathcal{F}}_t$-measurable random variables $\mathbf{1}_{t \leq \bar{\tau}} v^{(j)}(t)$ converges almost surely to $\mathbf{1}_{t \leq \bar{\tau}} \bar u(t)$, which makes this limit also $\bar{\mathcal{F}}_t$-measurable. Thus, the process $(\mathbf{1}_{t \leq \bar{\tau}} \bar{u}(t))_{t \geq0}$ is $(\bar{\mathcal{F}}_t)_{t \geq 0}$-adapted.
\end{proof}

\subsection{Construction of a maximal solution to \eqref{SSCL}}\label{LWP} In this subsection, we use the notions introduced in Subsection \ref{sec:mild} to prove the following existence and uniqueness result for \eqref{SSCL}.

\begin{lemma}[Existence and uniqueness result of a maximal solution to \eqref{SSCL}]\label{lem:solmax}
  Under Assumptions \ref{ass:A} and \ref{ass:g}, for any $u_0 \in H^1_0(\T)$, there exists a pair $(T^*, (u(t))_{t \in [0, T^*)})$ such that:
  \begin{enumerate}
    \item for any $(\mathcal{F}_t)_{t \geq 0}$-stopping time $T$ such that almost surely, $T < +\infty$ and $T \leq T^*$, $(u(t))_{t \in [0,T]}$ is the unique mild solution to \eqref{SSCL} on $[0,T]$;
    \item almost surely, $T^*=+\infty$ or $\limsup_{t \to T^*} \|u(t)\|_{H^1_0(\T)} = +\infty$.
  \end{enumerate}
\end{lemma}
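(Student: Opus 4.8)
The plan is to build the maximal solution by concatenating the local solutions provided by Lemma~\ref{localWP}, using a standard \emph{patching} (or \emph{gluing}) argument along an increasing sequence of stopping times, and to identify $T^*$ as the blow-up time of the $H^1_0(\T)$-norm. The key device is the time-homogeneity of the noise: because the increments $(\bar{W}^Q(t+s)-\bar{W}^Q(t))_{s\geq0}$ form a $Q$-Wiener process with respect to the shifted filtration, Lemma~\ref{localWP} can be reapplied after each stopping time to the shifted equation, and the solution obtained there can be spliced onto the solution already constructed.

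First I would set up an iterative construction. Starting from $u_0\in H^1_0(\T)$ with $\|u_0\|_{H^1_0(\T)}\leq m_0$ for some deterministic integer $m_0$, Lemma~\ref{localWP} yields a local mild solution on $[0,\tau_0]$ where $\tau_0=\tau_{m_0}(\bar{W}^Q)$. I would then define $\sigma_0:=\tau_0$ and, inductively, having built $(u(t))_{t\in[0,\sigma_j]}$, let $m_{j+1}$ be an $\mathcal{F}_{\sigma_j}$-measurable integer dominating $\|u(\sigma_j)\|_{H^1_0(\T)}$, apply the strong Markov property together with Lemma~\ref{localWP} to the shifted $Q$-Wiener process $(\bar{W}^Q(\sigma_j+s)-\bar{W}^Q(\sigma_j))_{s\geq0}$ and initial datum $u(\sigma_j)$, obtaining a local solution on a time-increment $\tau_{j+1}$, and set $\sigma_{j+1}:=\sigma_j+\tau_{j+1}$. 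The uniqueness part of Lemma~\ref{localWP} guarantees that on every overlap the newly constructed piece agrees with the previously constructed one, so the glued process $(u(t))_{t\in[0,\sigma_\infty)}$ with $\sigma_\infty:=\lim_j\sigma_j=\sup_j\sigma_j$ is well-defined, continuous in $H^1_0(\T)$, adapted, and satisfies the mild formulation~\eqref{solution} on each $[0,\sigma_j]$. I would then set $T^*:=\sigma_\infty$.

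Next I would establish property~(1), namely that for any almost surely finite stopping time $T\leq T^*$ the restriction $(u(t))_{t\in[0,T]}$ is \emph{the} unique mild solution on $[0,T]$. Existence is immediate from the construction since $T\leq\sigma_\infty$ and the mild formulation holds up to any $\sigma_j$; to reach all of $[0,T]$ one localizes by the events $\{T\leq\sigma_j\}$, which exhaust $\{T<\sigma_\infty\}$ up to a null set. For uniqueness I would argue that any two mild solutions on $[0,T]$ must coincide: introducing the first time the two solutions or their common norm exits a ball of radius $m+1$, one reduces to an interval on which the local uniqueness of Lemma~\ref{localWP} applies verbatim (this is precisely the stopping-time comparison already carried out inside the proof of Lemma~\ref{localWP}), and a continuity-in-time argument propagates the equality across successive increments.

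Finally, property~(2) is the \emph{blow-up alternative}, and this is the step I expect to be the main obstacle. On the event $\{T^*<+\infty\}$ I must show $\limsup_{t\to T^*}\|u(t)\|_{H^1_0(\T)}=+\infty$. The crux is a lower bound on the increments $\tau_{j+1}$ in terms of the norm $m_{j+1}$: from the definition of $\tau_{\bar{m}_0}(\bar{W}^Q)$, the increment stays bounded below by a positive quantity determined by $m_{j+1}$ and by the modulus of continuity of the shifted stochastic convolution. If $\|u(t)\|_{H^1_0(\T)}$ stayed bounded along a sequence $t\to T^*$, one could choose the $m_j$ bounded along a subsequence; since the stochastic convolution is almost surely continuous on the compact $[0,T^*]$ (Proposition~\ref{continuity}), its local modulus of continuity near $T^*$ is controlled, forcing a uniform positive lower bound on the corresponding increments $\tau_{j+1}$. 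This would contradict $\sum_j\tau_{j+1}=\sigma_\infty=T^*<+\infty$, which requires $\tau_{j+1}\to0$. The delicate point is handling the $\bar{\mathcal{F}}_{\sigma_j}$-measurability and the shift of the Wiener process cleanly enough that the almost sure continuity of each shifted convolution transfers to a uniform control; I would phrase this via the strong Markov property established in Corollary~\ref{markov} and the fact that the convolution increments are governed by a single continuous process on $[0,T^*]$.
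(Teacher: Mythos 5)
Your construction of the maximal solution --- iterating Lemma~\ref{localWP} along the stopping times $\sigma_j$ with the shifted $Q$-Wiener processes and gluing by local uniqueness --- is exactly the paper's argument, as is your treatment of property~(1). Where you genuinely diverge is the blow-up alternative~(2). The paper makes the truncation levels $m^{(n)}_0$ nondecreasing, observes that on the event $\{T^*<\infty,\ \sup_n m^{(n)}_0<\infty\}$ the levels are eventually constant equal to some $m$, and then uses only the strong Markov property of the $Q$-Wiener process: the local times $\tau_m ( W^Q ( T^{(n)} + \cdot ) - W^Q ( T^{(n)} ) )$ are i.i.d.\ and almost surely positive, so their sum diverges almost surely by Borel's $0$--$1$ law, and the event is null. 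Your route is pathwise: derive a positive lower bound, uniform in $j$, on the increments $\tau^{(j+1)}$ from a uniform modulus of continuity of the restarted stochastic convolutions, and contradict $\sum_j\tau^{(j)}=T^*<\infty$. This can be made to work and buys a more quantitative statement with no appeal to independence, but it forces you to prove the uniform control you yourself flag as delicate. The clean way is the identity $\bar w^{(j)}(t)=w(\sigma_j+t)-S_t\,w(\sigma_j)$, which together with the uniform continuity of $w$ on $[0,T^*]$ and the strong convergence $S_t\to \mathrm{Id}$, uniform on the compact set $\{w(s):s\in[0,T^*]\}$, gives $\sup_j\|\bar w^{(j)}(t)\|_{H^1_0(\T)}\to0$ as $t\to0$ on $\{T^*<\infty\}$; without this identification, the almost sure continuity of each individual shifted convolution does not by itself yield a bound uniform over the infinitely many restarts. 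Two smaller corrections: do not invoke Corollary~\ref{markov} here --- it is a consequence of Theorem~\ref{wellposedness} and hence downstream of this lemma, so the appeal would be circular; what you need (and what the paper uses) is the strong Markov property of the $Q$-Wiener process itself. Also, the hypothesis to contradict is that $\|u(t)\|_{H^1_0(\T)}$ is bounded on a left neighbourhood of $T^*$ (the negation of $\limsup_{t\to T^*}\|u(t)\|_{H^1_0(\T)}=+\infty$), not merely along a sequence of times; with the minimal nondecreasing choice of the $m_j$ this makes the whole sequence $(m_j)$ bounded on that event, which is what the lower bound on the $\tau^{(j)}$ requires.
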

The random time $T^*$ is called the \textit{explosion time} and the process $(u(t))_{t \in [0,T^*)}$ is called the \textit{maximal solution} to \eqref{SSCL}.

\begin{proof}
Let $u_0 \in H^1_0(\T)$. Let $m^{(0)}_0 = \lceil \|u_0\|_{H^1_0(\T)} \rceil$. By Lemma \ref{localWP}, Equation \eqref{SSCL} possesses a unique mild solution $(u(t))_{t \in [0,\tau^{(0)}]}$ on $[0,\tau^{(0)}]$, where $\tau^{(0)} = \tau_{m^{(0)}_0}$. We now define the filtration $(\mathcal{F}^{(1)}_t)_{t \geq 0}$ by 
\[ \mathcal{F}^{(1)}_t = \mathcal F_{\tau^{(0)}+t} = \left\{ B\in\mathcal F : \forall s\geq0, B\cap \{ \tau^{(0)}+t \leq s \} \in \mathcal F_s \right\}, \]
and recall that the process $W^{Q,(1)}$ defined by $W^{Q,(1)}(t) = W^Q(\tau^{(0)}+t) - W^Q(t)$ is a $Q$-Wiener process with respect to $(\mathcal{F}^{(1)}_t)_{t \geq 0}$. Therefore, applying Lemma \ref{localWP} again with this $Q$-Wiener process, and initial condition $u_0^{(1)} = u(\tau^{(0)})$ and $m^{(1)}_0 = \lceil \| u(\tau^{(0)}) \|_{H^1_0(\T)} \rceil \vee m^{(0)}_0$, we obtain a mild solution $(u^{(1)}(t))_{t \in [0,\tau^{(1)}]}$ of $\dd u = -\partial_x A(u)\dd t + \nu \partial_{xx} u \dd t + \dd W^{Q,(1)}$ on $[0,\tau^{(1)}]$, where $\tau^{(1)} = \tau_{m^{(1)}_0}(W^{Q,(1)})$. It is then easily checked that defining $T^{(1)} = \tau^{(0)}+\tau^{(1)}$ and $u(t+\tau^{(0)}) = u^{(1)}(t)$ for any $t \in (0,\tau^{(1)}]$, we obtain a unique mild solution $(u(t))_{t \in [0, T^{(1)}]}$ to Equation \eqref{SSCL} on $[0,T^{(1)}]$.

We now proceed by induction and set for all $n\geq 1$,
\begin{align*}
T^{(n)} &:= \sum_{i=0}^n \tau^{(i)} , \\
m^{(n+1)}_0 &:= \left\lceil \left\| u \left( T^{(n)} \right) \right\|_{H^1_0(\T)} \right\rceil \vee m^{(n)}_0 , \\
\tau^{(n+1)} &:= \tau_{m^{(n+1)}_0} \left( W^Q\left( T^{(n)} + \cdot \right) - W^Q \left( T^{(n)} \right) \right) ,\\
T^* &:= \sup_{n \geq 1} T^{(n)},
\end{align*}
where at each iteration we use Lemma \ref{localWP} to extend the process $(u(t))_{t\in\left[0,T^{(n)}\right]}$ to the unique mild solution of Equation \eqref{SSCL} on $[0,T^{(n)}]$. It is then clear that $(u(t))_{t \in [0, T^*)}$ satisfies the first assertion of Lemma \ref{lem:solmax}.

Since the sequence of integers $( m^{(n)}_0 )_{n\geq0}$ is nondecreasing, $\sup_{n\geq0} m^{(n)}_0<+\infty$ if and only if there exists $n_0\geq0$ and $m\geq0$ such that, for all $n\geq n_0$, $m^{(n)}_0=m$. Hence, we can write
\begin{align*}
\left\{ T^* < +\infty , \sup_{n\geq0} m^{(n)}_0 < +\infty \right\} &= \bigcup_{n_0\geq0,m\geq0} \left\{ \sum_{n=0}^\infty \tau^{(n)} < +\infty, \forall n \geq n_0 , m^{(n)}_0 = m \right\} \\
&= \bigcup_{n_0\geq0,m\geq0} \left\{ \sum_{n=n_0+1}^\infty \tau^{(n)} < +\infty , \forall n\geq n_0 , m^{(n)}_0 = m \right\} \\
&\subset \bigcup_{n_0\geq0,m\geq0} \left\{ \sum_{n=n_0+1}^\infty \tau_m \left( W^Q \left( T^{(n)} + \cdot \right) - W^Q \left( T^{(n)} \right) \right) < +\infty \right\} .
\end{align*}
However, by the strong Markov property, for any $m\geq0$, the random variables $\tau_m ( W^Q ( T^{(n)} + \cdot ) - W^Q ( T^{(n)} ) )$, $n\geq1$, are independent and identically distributed, and by the definition of $\tau_m(\cdot)$, they are almost surely positive. As a consequence, by Borel's 0-1 law,
\[ \forall n_0 , m \geq 0 , \quad \P \left( \sum_{n=n_0+1}^\infty \tau_m \left( W^Q \left( T^{(n)} + \cdot \right) - W^Q \left( T^{(n)} \right) \right) < +\infty \right) = 0 . \]
As the countable union of negligible events is still negligible, we get
\[ \P \left( T^* < +\infty , \sup_{n\geq0} m^{(n)}_0 < +\infty \right) = 0 . \]
This implies that almost surely, if $T^* < +\infty$ then $\sup_{n\geq0} m^{(n)}_0 = +\infty$, so that $\limsup_{n\to \infty} \|u(T^{(n)})\|_{H^1_0(\T)} = +\infty$, which is the wanted result.
\end{proof}

\subsection{Estimates on the maximal solution}\label{sec:estimates} Let $u_0 \in H^2_0(\T)$. Let $(T^*,(u(t))_{t \in [0,T^*)})$ be the maximal solution to Equation \eqref{SSCL} given by Lemma \ref{lem:solmax}. By Proposition \ref{prop:mild-strong}, $(u(t))_{t \in [0,T^*)}$ is a continuous $H^2_0(\T)$-valued process. Besides, Lemma \ref{lem:solmax} allows us to define, for any $r\geq0$, the stopping time
\begin{equation}\label{stoppingtime}
T_r := \inf \left\{ t \in \left[0,T^*\right) : \left\| u(t) \right\|_{H^1_0(\T)}^2 \geq r \right\} ,
\end{equation}
which always satisfies $T_r\leq T^*$. In the sequel, we shall prove that $\lim_{r\to\infty} T_r = +\infty$, which shall imply that $T^* = +\infty$, almost surely.

\begin{lemma}\label{momentapprox}
Under Assumptions \ref{ass:A} and \ref{ass:g}, for any $p\in 2\N^*$ and for all $t\geq0$, we have:
\begin{equation}\label{intermapprox}
\frac{4\nu}{p} (p-1) \E \left[ \int_0^{t\wedge T_r} \int_\T \left( \partial_x u(s)^{p/2} \right)^2 \dd x \dd s \right]  \leq \| u_0 \|_{L^p_0(\T)}^p + \frac{p(p-1)}{2} D_0 \E \left[ \int_0^{t\wedge T_r} \| u(s) \|_{L^{p-2}_0(\T)}^{p-2} \dd s \right] .
\end{equation}
Moreover, there exist two constants $C_5^{(p)},C_6^{(p)}>0$ depending only on $\nu$, $p$ and $D_0$ such that
\begin{equation}\label{momentfinitetime}
\E \left[ \int_0^{t\wedge T_r} \left\| u(s) \right\|_{L^p_0(\T)}^p \dd s \right] \leq C_5^{(p)} \left( 1+  \| u_0 \|_{L^p_0(\T)}^p \right) + C_6^{(p)} t .
\end{equation}
\end{lemma}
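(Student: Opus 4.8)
The plan is to derive both estimates from an application of Itô's formula to the functional $v \mapsto \int_\T v^p \, \dd x$, which is smooth on $H^2_0(\T)\subset L^\infty(\T)$ since $p$ is even, and then to run an induction on $p$ for the second estimate. Throughout, the stopping time $T_r$ is what makes everything licit: up to $T_r$ the solution stays bounded in $H^1_0(\T)$, hence in $L^\infty(\T)$ by \eqref{strongerpoincare}, so all terms are integrable, the stochastic integral is a genuine martingale, and the time-integrals below are a priori finite (bounded by $r^{p/2}t$).

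For \eqref{intermapprox}, I would apply the Hilbert-space Itô formula of \cite{DZ92} to $\int_\T u(t\wedge T_r)^p \, \dd x$. Three deterministic contributions appear. The convection term $-p\int_\T u^{p-1}\partial_x A(u)\,\dd x$ vanishes: writing $u^{p-1}A'(u)\partial_x u = \partial_x \Psi(u)$ for a primitive $\Psi$ of $v\mapsto v^{p-1}A'(v)$, it integrates to zero by periodicity. The viscous term, after integration by parts, equals $-\nu p(p-1)\int_\T u^{p-2}(\partial_x u)^2\,\dd x = -\tfrac{4\nu(p-1)}{p}\int_\T(\partial_x u^{p/2})^2\,\dd x$, using $\partial_x u^{p/2} = \tfrac p2 u^{p/2-1}\partial_x u$. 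Finally the Itô correction is $\tfrac{p(p-1)}2\int_\T u^{p-2}\sum_k g_k^2\,\dd x$, which I bound by $\tfrac{p(p-1)}2 D_0\|u\|_{L^{p-2}_0(\T)}^{p-2}$ since $\|\sum_k g_k^2\|_{L^\infty}\le\sum_k\|g_k\|_{L^\infty_0(\T)}^2\le\sum_k\|g_k\|_{H^2_0(\T)}^2=D_0$ by \eqref{strongerpoincare}. Taking expectations, discarding the nonnegative term $\E[\int_\T u(t\wedge T_r)^p\,\dd x]$ and the vanishing martingale, and rearranging yields \eqref{intermapprox}.

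For \eqref{momentfinitetime} I would argue by induction on the even integer $p$. The Poincaré--Wirtinger inequality applied to $w:=u^{p/2}$ gives $\|u\|_{L^p_0(\T)}^p=\|w\|_{L^2}^2\le C_P\int_\T(\partial_x u^{p/2})^2\,\dd x+\bar w^2$, where $\bar w:=\int_\T u^{p/2}\,\dd x$ is the zero mode of $w$. The gradient term is controlled, after integration in time and expectation, by \eqref{intermapprox} and hence, through the induction hypothesis at level $p-2$ together with $\|u_0\|_{L^{p-2}_0(\T)}^{p-2}\le 1+\|u_0\|_{L^p_0(\T)}^p$, by $1+\|u_0\|_{L^p_0(\T)}^p$ and $t$. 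The base case $p=2$ is immediate: there $\bar w=\int_\T u\,\dd x=0$ because $u(s)\in L^2_0(\T)$, and \eqref{intermapprox} combined with $\|\partial_x u\|_{L^2}^2\ge\|u\|_{L^2_0(\T)}^2$ closes the estimate.

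The delicate point, which I expect to be the main obstacle, is the zero mode $\bar w$: for $p\ge4$ the function $u^{p/2}$ is no longer mean-zero, so this term does not vanish. I would control it by interpolation, $\|u\|_{L^{p/2}_0(\T)}\le\|u\|_{L^2_0(\T)}^{1-\alpha}\|u\|_{L^p_0(\T)}^{\alpha}$ with $\alpha=\tfrac{p-4}{p-2}\in[0,1)$, so that $\bar w^2\le\|u\|_{L^{p/2}_0(\T)}^p\le\epsilon\|u\|_{L^p_0(\T)}^p+C_\epsilon\|u\|_{L^2_0(\T)}^p$ by Young's inequality. After integration and expectation, the term $\epsilon\,\E[\int_0^{t\wedge T_r}\|u\|_{L^p_0(\T)}^p\,\dd s]$ is absorbed into the left-hand side precisely because this quantity is a priori finite thanks to $T_r$, the resulting constants remaining independent of $r$. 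It then remains to bound the auxiliary moments $K_m:=\E[\int_0^{t\wedge T_r}\|u\|_{L^2_0(\T)}^{2m}\,\dd s]$ at $m=p/2$, which I would handle by a separate induction on $m$: applying Itô's formula to $\|u\|_{L^2_0(\T)}^{2m}$ and using $\|\partial_x u\|_{L^2}^2\ge\|u\|_{L^2_0(\T)}^2$ together with $\sum_k\|g_k\|_{L^2_0(\T)}^2\le D_0$ produces a recursion $K_m\le C\|u_0\|_{L^{2m}_0(\T)}^{2m}+C'K_{m-1}$ that unrolls down to $K_0\le t$ and gives $K_m\le C(1+\|u_0\|_{L^{2m}_0(\T)}^{2m})+C't$. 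Feeding this back closes the induction and yields \eqref{momentfinitetime}.
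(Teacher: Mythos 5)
Your overall strategy coincides with the paper's: It\^o's formula applied to the $p$-th power of the $L^p$ norm, the flux term vanishing by writing $u^{p-1}A'(u)\partial_x u$ as an exact derivative, integration by parts on the viscous term, the bound $\sum_k g_k^2 \leq D_0$ via \eqref{strongerpoincare}, then Poincar\'e and a downward induction on $p$. But there is a genuine gap at the very first step. You invoke the Hilbert-space It\^o formula for $v \mapsto \int_\T v^p\,\dd x$ on the grounds that it is smooth on $H^2_0(\T) \subset L^\infty(\T)$. The difficulty is that the semimartingale decomposition $u(t) = u_0 + \int_0^t \varphi(s)\,\dd s + W^Q(t)$ only holds with $\varphi(s) = -\partial_x A(u(s)) + \nu\partial_{xx}u(s) \in L^2_0(\T)$: the process is an It\^o process in $L^2_0(\T)$, not in $H^2_0(\T)$, so the It\^o formula of \cite[Theorem 4.32]{DZ92} must be applied with state space $L^2_0(\T)$ --- and there the functional $F_p$ is not even continuous for $p>2$. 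The paper explicitly flags this and spends the bulk of the proof on a regularisation: it truncates $F_p$ by a smoothed cut-off $\phi_M$, applies It\^o's formula to the resulting $C^2$ functionals $F_{M,p}$ on $L^2_0(\T)$, and passes to the limit $M\to\infty$ term by term using monotone and dominated convergence (the domination being exactly where the stopping time $T_r$ and the bound $\|u(s)\|_{L^\infty_0(\T)} \leq \sqrt{r}$ are used). Your proof needs this approximation layer, or an equivalent one; as written, the application of It\^o's formula is not justified.

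On the second estimate, the ``delicate point'' you identify --- the nonzero mean of $w = u^{p/2}$ for $p\geq 4$ --- is dissolved in the paper by a one-line observation: since $u(s)$ is continuous in $x$ and has zero average, it vanishes somewhere on the torus, hence so does $u(s)^{p/2}$, and the Poincar\'e inequality for functions vanishing at a point gives $\|u\|_{L^p_0(\T)}^p = \|u^{p/2}\|_{L^2}^2 \leq \|\partial_x(u^{p/2})\|_{L^2}^2$ directly, with no zero-mode term. Your workaround (Poincar\'e--Wirtinger plus interpolation of $L^{p/2}$ between $L^2$ and $L^p$, Young's inequality, absorption using the a priori finiteness furnished by $T_r$, and a separate induction on the moments $\E[\int_0^{t\wedge T_r}\|u\|_{L^2_0(\T)}^{2m}\dd s]$ via another It\^o computation) appears to be sound and would produce constants of the required form, but it is a substantial detour --- including a second It\^o argument that again needs justification --- for something the paper obtains for free from the mean-zero constraint.
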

\begin{proof}
Let $p\in 2\N^*$. We want to apply It\^o's formula on $[0,t \wedge T_r]$ to the $H^2_0(\T)$-valued process $(u(t))_{t \in [0,T^*)}$ with the function $F_p : u \mapsto \|u\|^p_{L^p_0(\T)}$. Since this process writes
\[ u(t) = u_0 + \int_0^t \varphi(s)\dd s + W^Q(t) \]
with $\varphi(t) = -\partial_x A(u(t)) + \nu \partial_{xx} u(t) \in L^2_0(\T)$, the standard formulation of It\^o's formula in Hilbert spaces \cite[Theorem 4.32]{DZ92} requires at least $F_p$ to be continuous on $L^2_0(\T)$, which is not the case for $p > 2$ here. Hence, we shall proceed to approximate $F_p$ with a sequence of smooth functions $F_{M,p}$, $M \geq 1$, apply It\^o's formula to the functions $F_{M,p}$ and then take the limit $M \to +\infty$.

\textbf{Step 1. Approximation of the $L^p_0(\T)$-norm.} Let $\rho$ be a $C^\infty$ function from $\R$ to $\R_+$ such that $\int_\R \rho(u)\dd u = 1$ and whose support is contained in the interval $(-\frac12,\frac12)$. For any $M\geq1$, we set the regularised Heaviside function $\psi_M:= \ind{\left(-\infty,M+\frac12\right]}*\rho$ and its antiderivative
\[ \phi_M : u \in \R_+ \longmapsto \int_0^u \psi_M(v)\dd v \in \R_+ . \]
We now define a truncated $L^p_0(\T)$-norm by setting
\begin{equation*}
 F_{M,p} :
 \begin{cases}
  L^2_0(\T) &\longrightarrow \R_+ \\
  v &\longmapsto \int_\T \phi_M \left( v(x)^p \right) \dd x .
 \end{cases}
\end{equation*}
The first differential $\DD F_{M,p}$ and the second differential $\DD^2F_{M,p}$ have the following expressions: $\forall v,h\in L^2_0(\T)$,
\[ \langle \DD F_{M,p}(v),h \rangle_{L^2_0(\T)} =p\int_\T h(x)v(x)^{p-1}\phi'_M\left( v(x)^p \right) \dd x , \]
\[ \langle \DD^2F_{M,p}(v)\cdot h,h\rangle_{L^2_0(\T)}=p(p-1)\int_\T h(x)^2v(x)^{p-2}\phi'_M\left(v(x)^p\right)\dd x+p^2\int_\T h(x)^2v(x)^{2(p-1)}\phi''_M\left(v(x)^p\right)\dd x . \]

\textbf{Step 2. It\^o's formula.} First, let us notice that the process $(W^Q(t))_{t\geq0}$ can be seen as an $L^2_0(\T)$-valued $Q'$-Wiener process where the operator $Q' : L^2_0(\T)\to L^2_0(\T)$ has covariance
\[ \langle u, Q'v\rangle_{L^2_0(\T)} = \sum_{k \geq 1} \langle g_k,u\rangle_{L^2_0(\T)}\langle g_k,v\rangle_{L^2_0(\T)} . \]
Indeed, Assumption \ref{ass:g} ensures that $Q'( L^2_0(\T))\subset H^2_0(\T)$ and $Q'_{|H^2_0(\T)} = Q$. We now have
\[ \mathrm{Tr} \left( \DD^2F_{M,p}(v) Q' \right) = \sum_{k\geq1} \langle \DD^2F_{M,p}(v)g_k,g_k \rangle_{L^2_0(\T)} , \]
so that we can apply It\^o's formula \cite[Theorem 4.32]{DZ92} for the real-valued process $\left(F_{M,p}(u(t))\right)_{t \in [0,T^*)}$, which leads to
\begin{align*}
 F_{M,p}\left( u(t) \right) = &F_{M,p}(u_0) +p\int_0^t \int_\T \left( -\partial_xA(u(s)) +\nu\partial_{xx}u(s) \right) u(s)^{p-1} \phi'_M\left(u(s)^p\right)\dd x\dd s \\
 &+\int_0^t\langle \DD F_{M,p}(u(s)) , \dd W^Q(s) \rangle_{L^2_0(\T)} \\
 &+\frac12 p(p-1)\sum_{k\geq1} \int_0^t \int_\T g_k^2 u(s)^{p-2} \phi'_M(u(s)^p) \dd x\dd s\\
 &+\frac12 p^2\sum_{k\geq1}\int_0^t\int_\T g_k^2 u(s)^{2(p-1)} \phi''_M (u(s)^p) \dd x\dd s .
\end{align*}
Since the $L^2_0(\T)$-norm of $\DD F_{M,p}(u(s))$ is bounded uniformly in time, the third term of the right-hand side is a square integrable martingale \cite[Theorem 4.27]{DZ92}. Thus, for $t \geq 0$, integrating in time up to $t\wedge T_r$ and taking the expectation, we get
\begin{align}
 \E \left[ F_{M,p}\left( u(t\wedge T_r) \right) \right] = &F_{M,p}(u_0) -p \E \left[ \int_0^{t\wedge T_r} \int_\T \partial_xA(u(s)) u(s)^{p-1} \phi'_M\left(u(s)^p\right)\dd x\dd s \right] \label{itom:driftflux} \\
 &+p \E \left[ \int_0^{t\wedge T_r} \int_\T  \nu\partial_{xx}u(s) u(s)^{p-1} \phi'_M\left(u(s)^p\right)\dd x\dd s \right] \label{itom:driftvisc} \\
 &+\frac12 p(p-1) \E \left[ \sum_{k\geq1} \int_0^{t\wedge T_r} \int_\T g_k^2 u(s)^{p-2} \phi'_M(u(s)^p) \dd x\dd s \right] \label{itom:ito1} \\
 &+\frac12 p^2\E\left[ \sum_{k\geq1}\int_0^{t\wedge T_r}\int_\T g_k^2 u(s)^{2(p-1)} \phi''_M (u(s)^p) \dd x\dd s \right] . \label{itom:ito2}
\end{align}

\textbf{Step 3. Passing $M\to+\infty$.} We want now to pass to the limit $M\to+\infty$. Regarding the left-hand side in the above equation, the family of functions $\phi_M$ is non-decreasing with respect to $M$, so that the monotone convergence theorem yields
\[ \lim_{M\to\infty} \E \left[ F_{M,p}\left( u(t\wedge T_r) \right) \right] = \E\left[ \int_\T \lim_{M\to\infty} \phi_M\left( u(t \wedge T_r)^p \right)\dd x \right] =\E\left[ \left\| u(t\wedge T_r) \right\|_{L^p_0(\T)}^p \right]. \]
For the flux term, we have almost surely, for all $s\in[0,t\wedge T_r]$ and for all $M\geq0$, $\partial_xA(u(s))u(s)^{p-1}\phi'_M(u(s)^p) \leq |\partial_xA(u(s))||u(s)|^{p-1}$. Furthermore,
\begin{align*}
\E \left[ \int_0^{t\wedge T_r} \int_\T \left| \partial_xA(u(s)) \right| |u(s)|^{p-1}\dd x \dd s \right] &\leq \E \left[ \sup_{s\in[0,t\wedge T_r]} \|u(s)\|_{L^\infty_0(\T)}^{p-1} \int_0^{t\wedge T_r} \int_\T \left| \partial_xA(u(s)) \right| \dd x \dd s \right] \\
&\leq r^{\frac{p-1}2} \E \left[ \int_0^{t\wedge T_r} \left\| \partial_xA(u(s)) \right\|_{L^2_0(\T)} \dd s \right] \quad \text{(from \eqref{strongerpoincare} and \eqref{stoppingtime})} \\
&\leq r^{\frac{p-1}2} L_r \E \left[ \int_0^{t\wedge T_r} \left\| u(s) \right\|_{L^2_0(\T)} \dd s \right] \quad \text{(from Proposition \ref{localboundedness})} \\
&\leq L_r r^{\frac p2} t < +\infty .
\end{align*}
Thus, the dominated convergence theorem applies and yields
\[ \lim_{M\to\infty} p \E \left[ \int_0^{t\wedge T_r} \int_\T \partial_xA(u(s)) u(s)^{p-1} \phi'_M\left(u(s)^p\right)\dd x\dd s \right] = p \E \left[ \int_0^{t\wedge T_r} \int_\T \partial_xA(u(s)) u(s)^{p-1} \dd x\dd s \right] . \]
We now integrate by parts the viscous term:
\begin{align*}
&p\nu \E \left[ \int_0^{t\wedge T_r} \int_\T \partial_{xx}u(s) u(s)^{p-1} \phi'_M\left( u(s)^p\right) \dd x \dd s \right] \\
&\quad = -p\nu \E \left[ \int_0^{t\wedge T_r} \int_\T \partial_xu(s) \left( \partial_x\left( u(s)^{p-1} \right) \phi'_M\left(u(s)^p\right) + u(s)^{p-1} \partial_x \left( \phi'_M\left(u(s)^p\right)\right) \right) \dd x \dd s \right] \\
&\quad = -p\nu \E \left[ \int_0^{t\wedge T_r} \int_\T (\partial_xu(s))^2 \left( (p-1)u(s)^{p-2} \phi'_M\left( u(s)^p\right) + p u(s)^{2(p-1)} \phi''_M\left(u(s)^p\right) \right) \dd x \dd s \right] , \\
\end{align*}
and this last integrand is dominated uniformly in $M$ by $(\partial_xu(s))^2 \left( (p-1)u(s)^{p-2} + \kappa p u(s)^{2(p-1)} \right)$, where $\kappa = \sup_\R |\rho|$. Furthermore, thanks to \eqref{stoppingtime}, we have
\begin{align*}
&\E \left[ \int_0^{t\wedge T_r} \int_\T \left( \partial_xu(s)\right)^2 \left( (p-1)u(s)^{p-2}+\kappa pu(s)^{2(p-1)} \right)\dd x \dd s \right] \\
&\quad \leq \E \left[ \left( (p-1) \sup_{s\in[0,t\wedge T_r]} \| u(s)\|_{L^\infty_0(\T)}^{p-2}+\kappa p\sup_{s\in[0,t\wedge T_r]} \| u(s)\|_{L^\infty_0(\T)}^{2(p-1)}\right) \int_0^{t\wedge T_r} \|u(s)\|_{H^1_0(\T)}^2 \dd s \right] \\
&\quad \leq \left( (p-1) r^{\frac{p-2}2} + \kappa p r^{p-1} \right) rt <+\infty.
\end{align*}
Thus, we get from the dominated convergence theorem,
\[ \lim_{M\to\infty} p \E \left[ \int_0^{t\wedge T_r} \int_\T  \nu\partial_{xx}u(s) u(s)^{p-1} \phi'_M\left(u(s)^p\right)\dd x\dd s \right] = -\nu p(p-1) \E \left[ \int_0^{t\wedge T_r} \int_\T (\partial_xu(s))^2 u(s)^{p-2} \dd x \dd s\right] . \]

With similar computations, for the noise term, we have
\[ \lim_{M\to\infty} p(p-1) \E \left[ \sum_{k\geq1} \int_0^{t\wedge T_r} \int_\T g_k^2 u(s)^{p-2} \phi'_M(u(s)^p) \dd x\dd s \right] = p(p-1)\E\left[ \sum_{k\geq1} \int_0^{t\wedge T_r} \int_\T g_k^2 u(s)^{p-2} \dd x\dd s \right] , \]
and
\[ \lim_{M\to\infty} p^2\E\left[ \sum_{k\geq1}\int_0^{t\wedge T_r} \int_\T g_k^2 u(s)^{2(p-1)} \phi''_M (u(s)^p) \dd x\dd s\right] = 0 . \]
Letting $M$ go to $+\infty$ in \eqref{itom:driftflux}, \eqref{itom:driftvisc}, \eqref{itom:ito1} and \eqref{itom:ito2}, we get
\begin{multline}\label{dynkinapprox}
\E \left[ \| u(t\wedge T_r) \|_{L^p_0(\T)}^p \right] = \| u_0 \|_{L^p_0(\T)}^p -p \E\left[ \int_0^{t\wedge T_r} \int_\T \partial_x A(u(s)) u(s)^{p-1}\dd x \dd s \right] \\
-\nu p(p-1) \E \left[ \int_0^{t\wedge T_r} \int_\T (\partial_xu(s))^2 u(s)^{p-2}\dd x \dd s \right] +\frac12p(p-1)\sum_{k\geq1}\E\left[ \int_0^{t\wedge T_r} \int_\T u(s)^{p-2}g_k^2\dd x \dd s \right] .
\end{multline}

It turns out that the flux term disappears:
\begin{equation}\label{fluxdisappears}
\int_\T u(s)^{p-1} \partial_x A(u(s)) \dd x = \int_\T u(s)^{p-1} A'(u(s)) \partial_x u(s) \dd x = \int_\T \partial_x \left( \mathcal A_p (u(s)) \right) \dd x = 0 ,
\end{equation}
where $\mathcal A_p$ is an antiderivative of $v \mapsto v^{p-1} A'(v)$. As regards the noise coefficients, we have
\[ \sum_{k\geq1} g_k(x)^2 \leq \sum_{k\geq1} \| g_k \|_{L^\infty_0(\T)}^2 \leq \sum_{k\geq1} \| g_k \|_{H^1_0(\T)}^2 \leq D_0 , \]
thanks to \eqref{strongerpoincare} and \eqref{boundgk}. As a consequence, we get from \eqref{dynkinapprox} the inequality
\begin{equation}\label{someineq}
\nu p (p-1) \E \left[ \int_0^{t\wedge T_r} \int_\T \left( u(s)^{\frac p2-1} \partial_x u(s) \right)^2 \dd x \dd s \right] \leq \| u_0 \|_{L^p_0(\T)}^p + \frac 1 2 p (p-1) D_0 \E \left[ \int_0^{t\wedge T_r} \| u(s) \|_{L^{p-2}_0(\T)}^{p-2} \dd s \right] .
\end{equation}
Rewriting the integrand in the left-hand side, we get
\begin{equation}
\frac{4\nu}{p} (p-1) \E \left[ \int_0^{t\wedge T_r} \int_\T \left( \partial_x\left( u(s)^{p/2}\right) \right)^2 \dd x \dd s \right]  \leq \| u_0 \|_{L^p_0(\T)}^p + \frac{p(p-1)}{2} D_0 \E \left[ \int_0^{t\wedge T_r} \| u(s) \|_{L^{p-2}_0(\T)}^{p-2} \dd s \right] .
\end{equation}
Since $u(s)$ has a zero space average   and is continuous in space (because it belongs to $H^1_0(\T)$), almost surely the function $u(s)^{p/2}$ vanishes somewhere on the torus. Thus, we can apply the Poincar\'e inequality on the left-hand side which leads, after multiplying by $p/(4\nu(p-1))$ on both sides, to the inequality
\begin{equation}\label{Lqbound}
\E \left[ \int_0^{t\wedge T_r} \| u(s) \|_{L^p_0(\T)}^p \dd s \right] \leq \frac{p}{4\nu(p-1)} \| u_0 \|_{L^p_0(\T)}^p + \frac{p^2D_0}{8\nu} \E \left[ \int_0^{t\wedge T_r} \| u(s) \|_{L^{p-2}_0(\T)}^{p-2} \dd s \right] .
\end{equation}
For $p=2$, we get
\[ \E \left[ \int_0^{t\wedge T_r} \| u(s) \|_{L^2_0(\T)}^2 \dd s \right] \leq \frac1{2\nu} \| u_0 \|_{L^2_0(\T)}^2 + \frac{D_0t}{2\nu} , \]
and the claimed result for arbitrary $p\in 2\N^*$ follows by induction and from the inequalities $\| u_0 \|_{L^{p-2r}_0(\T)}^{p-2r} \leq 1 + \| u_0 \|_{L^p_0(\T)}^p$ and $\E [ t \wedge T_r ] \leq t$.
\end{proof}

\begin{remark}
By Jensen's inequality, the bound \eqref{momentfinitetime} also holds for any real number $p\geq2$.
\end{remark}

\begin{lemma}\label{H1approx}
Under Assumptions \ref{ass:A} and \ref{ass:g}, there exist two constants $C_7,C_8>0$ depending only on $\nu$, $p_A$, $C_1$ and $D_0$, such that for all $t\geq0$ and all $r\geq0$,
\[ \E \left[ \| u (t\wedge T_r) \|_{H^1_0(\T)}^2 \right] + \nu \E \left[ \int_0^{t\wedge T_r} \| u(s) \|_{H^2_0(\T)}^2 \dd s\right] \leq \| u_0 \|_{H^1_0(\T)}^2 + C_7 \left( 1+ \| u_0 \|_{L^{2p_A+2}_0(\T)}^{2p_A+2} \right) + C_8 t . \]
\end{lemma}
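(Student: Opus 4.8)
The plan is to apply It\^o's formula to the functional $F(u)=\|u\|_{H^1_0(\T)}^2$ along the maximal solution, stopped at $t\wedge T_r$. As in the proof of Lemma~\ref{momentapprox}, the obstruction is that $F$ is not finite, let alone $C^2$, on all of $L^2_0(\T)$, so \cite[Theorem~4.32]{DZ92} cannot be applied to $F$ directly. I would therefore introduce the spectral truncations
\[ \Phi_N(v) := \sum_{m=1}^N (-\lambda_m)\,\langle v,e_m\rangle_{L^2_0(\T)}^2, \qquad N\geq1, \]
which depend on finitely many Fourier coordinates, hence are smooth on $L^2_0(\T)$, satisfy $\Phi_N(v)\nearrow\|v\|_{H^1_0(\T)}^2$ by \eqref{fractional}, and have differentials $\langle\DD\Phi_N(v),h\rangle_{L^2_0(\T)}=2\sum_{m=1}^N(-\lambda_m)\langle v,e_m\rangle_{L^2_0(\T)}\langle h,e_m\rangle_{L^2_0(\T)}$ and $\langle\DD^2\Phi_N(v)h,h\rangle_{L^2_0(\T)}=2\sum_{m=1}^N(-\lambda_m)\langle h,e_m\rangle_{L^2_0(\T)}^2$. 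Viewing $W^Q$ as the $L^2_0(\T)$-valued $Q'$-Wiener process from Step~2 of the proof of Lemma~\ref{momentapprox}, I would apply \cite[Theorem~4.32]{DZ92} to $\Phi_N(u(t))$, stop at $t\wedge T_r$ and take expectations, noting that the stochastic integral is a square-integrable martingale (its integrand is bounded on $[0,T_r]$ since $\|u\|_{H^1_0(\T)}^2<r$ there) and so has zero expectation.

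Using $\partial_{xx}e_m=\lambda_m e_m$, the viscous part of the drift equals $-2\nu\sum_{m=1}^N(-\lambda_m)^2\langle u,e_m\rangle_{L^2_0(\T)}^2$, the It\^o correction $\tfrac12\mathrm{Tr}(\DD^2\Phi_N(u)Q')=\sum_{k\geq1}\sum_{m=1}^N(-\lambda_m)\langle g_k,e_m\rangle_{L^2_0(\T)}^2$ is bounded by $\sum_{k\geq1}\|g_k\|_{H^1_0(\T)}^2\leq D_0$ uniformly in $N$ and $u$ by \eqref{strongerpoincare} and \eqref{boundgk}, and the flux part of the drift is $-2\sum_{m=1}^N(-\lambda_m)\langle u,e_m\rangle_{L^2_0(\T)}\langle\partial_x A(u),e_m\rangle_{L^2_0(\T)}$ (recall $\partial_x A(u)\in L^2_0(\T)$ with zero mean). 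Rather than pass $N\to\infty$ in the flux term directly — which would require knowing a priori that $\E[\int\|u\|_{H^2_0(\T)}^2]<\infty$ — I would bound it \emph{at the truncated level} by Cauchy--Schwarz and Young: for any $\alpha>0$ it is at most $\alpha\sum_{m=1}^N(-\lambda_m)^2\langle u,e_m\rangle_{L^2_0(\T)}^2+\tfrac1\alpha\|\partial_x A(u)\|_{L^2_0(\T)}^2$. Choosing $\alpha<2\nu$ and absorbing the first piece into the viscous term, both surviving left-hand contributions are nonnegative and nondecreasing in $N$, so monotone convergence yields
\[ \E\!\left[\|u(t\wedge T_r)\|_{H^1_0(\T)}^2\right] + (2\nu-\alpha)\,\E\!\left[\int_0^{t\wedge T_r}\|u(s)\|_{H^2_0(\T)}^2\,\dd s\right] \leq \|u_0\|_{H^1_0(\T)}^2 + \frac1\alpha\,\E\!\left[\int_0^{t\wedge T_r}\|\partial_x A(u(s))\|_{L^2_0(\T)}^2\,\dd s\right] + D_0\,t. \]

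The crux is then the residual term $\|\partial_x A(u)\|_{L^2_0(\T)}^2=\int_\T A'(u)^2(\partial_x u)^2\,\dd x$. By the polynomial growth \eqref{PGA}, $A'(u)^2\leq 2C_1^2(1+u^{2p_A})$, which splits this into a multiple of $\|u\|_{H^1_0(\T)}^2$ and a multiple of $\int_\T u^{2p_A}(\partial_x u)^2\,\dd x$. The key observation is that, since $u^{p_A}\partial_x u=(p_A+1)^{-1}\partial_x(u^{p_A+1})$, the latter equals $(p_A+1)^{-2}\int_\T(\partial_x(u^{p_A+1}))^2\,\dd x$, which is \emph{precisely} the quantity estimated in \eqref{intermapprox} for $p=2p_A+2\in2\N^*$; combined with \eqref{momentfinitetime} applied at exponent $p-2=2p_A$, its time-integrated expectation is at most $C(1+\|u_0\|_{L^{2p_A+2}_0(\T)}^{2p_A+2})+Ct$. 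For the leftover $\|u\|_{H^1_0(\T)}^2$ I would use the interpolation $\|u\|_{H^1_0(\T)}^2\leq\|u\|_{L^2_0(\T)}\|u\|_{H^2_0(\T)}$ (Cauchy--Schwarz on the Fourier series) followed by Young, so its $H^2_0(\T)$-part is again absorbed and its $L^2_0(\T)$-part is controlled by \eqref{momentfinitetime} with $p=2$. Choosing all Young parameters small enough that the cumulative $H^2_0(\T)$-mass extracted from the flux stays below $\nu$ leaves a coefficient $\nu$ on the dissipation term, and collecting the remaining contributions into constants $C_7,C_8$ depending only on $\nu,p_A,C_1,D_0$ — using $\|u_0\|_{L^q_0(\T)}^q\leq1+\|u_0\|_{L^{2p_A+2}_0(\T)}^{2p_A+2}$ for $q\in\{2,2p_A\}$ — gives exactly the claimed inequality.

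The main obstacle is the treatment of the flux term: one must recognise that Cauchy--Schwarz and Young produce not an uncontrollable high power of $\|u\|_{H^1_0(\T)}$ but precisely the dissipation-type quantity $\int_\T(\partial_x(u^{p_A+1}))^2\,\dd x$ already estimated in Lemma~\ref{momentapprox}, which is what lets the bound close with a right-hand side that is linear in $t$ and polynomial in the $L^{2p_A+2}_0(\T)$-norm of $u_0$, without recourse to Gr\"onwall's lemma. Performing the flux estimate before passing $N\to\infty$ is the device that makes the monotone limit legitimate. The truncation and limit-passage steps are otherwise routine once modelled on the proof of Lemma~\ref{momentapprox}.
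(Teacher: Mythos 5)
Your proposal is correct and follows essentially the same route as the paper: spectral truncation of the $H^1_0(\T)$-norm, It\^o's formula, Young's inequality to absorb the flux term into the viscous dissipation, and control of $\|\partial_x A(u)\|_{L^2_0(\T)}^2$ via the polynomial growth of $A'$ together with the identification of $\int_\T u^{2p_A}(\partial_x u)^2\,\dd x$ with the dissipation quantity of \eqref{intermapprox} at $p=2p_A+2$. The only (harmless) variations are that you pass to the limit in the truncation by monotone rather than dominated convergence after estimating the flux at the truncated level, and that you treat the leftover $\|u\|_{H^1_0(\T)}^2$ by interpolation and Young where the paper simply invokes \eqref{intermapprox} with $p=2$.
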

\begin{proof}
We want to apply It\^o's formula to the squared $H^1_0(\T)$-norm of the process $\left(u(t)\right)_{t\in[0,T^*)}$. As for the proof of Lemma \ref{momentapprox}, we proceed by truncation of this function.

\textbf{Step 1. Approximation of the $H^1_0(\T)$-norm.} We set
\begin{equation*}
G_M :
 \begin{cases}
  L^2_0(\T) &\longrightarrow \R_+ \\
  v &\longmapsto \sum_{m=1}^M (-\lambda_m) \langle v, e_m \rangle_{L^2_0(\T)}^2
 \end{cases}
\end{equation*}
The first differential $\DD G_M$ and the second differential $\DD^2G_M$ have the following expressions: $\forall h \in L^2_0(\T)$,
\[ \langle \DD G_M(v),h \rangle_{L^2_0(\T)} = -2\sum_{m=1}^M \lambda_m \langle v, e_m \rangle_{L^2_0(\T)} \langle h,e_m \rangle_{L^2_0(\T)} , \]
\[ \langle \DD^2 G_M (v) \cdot h,h \rangle_{L^2_0(\T)} = -2 \sum_{m=1}^M \lambda_m \langle h,e_m \rangle_{L^2_0(\T)}^2 . \]

\textbf{Step 2. It\^o's formula.} It\^o's formula applied to $G_M$ yields almost surely and for all $r\geq0$,
\begin{multline}\label{itoH1}
 G_M(u(t\wedge T_r)) = G_M(u_0) -2\int_0^{t\wedge T_r} \sum_{m=1}^M \lambda_m \langle u(s),e_m\rangle_{L^2_0(\T)} \langle -\partial_x A(u(s))+\nu\partial_{xx}u(s) , e_m\rangle_{L^2_0(\T)} \dd s \\
 -2\int_0^{t\wedge T_r}\langle \DD G_M(u(s)) , \dd W^Q(s) \rangle_{L^2_0(\T)} - 2 \sum_{k\geq1} \int_0^{t\wedge T_r} \sum_{m=1}^M \lambda_m \langle g_k ,e_m \rangle_{L^2_0(\T)}^2 \dd s .
\end{multline}
We first check that the third term of the right-hand side is a square-integrable martingale:
\begin{align*}
\E \left[ \int_0^{t\wedge T_r} \left\| DG_M (u(s)) \right\|_{L^2_0(\T)}^2 \dd s \right] &= 4 \sum_{m=1}^M \lambda_m^2  \E \left[ \int_0^{t\wedge T_r} \langle u(s),e_m \rangle_{L^2_0(\T)}^2 \dd s \right] \\
&\leq 4 \left( \sum_{m=1}^M \lambda_m^2 \right) \E \left[ \int_0^{t\wedge T_r} \|u(s)\|_{L^2_0(\T)}^2 \dd s \right] \leq 4 \left( \sum_{m=1}^M \lambda_m^2 \right) t r < +\infty .
\end{align*}
Thus, taking the expectation, the stochastic integral disappears and we get
\begin{multline}\label{itoH1E}
\E \left[ G_M(u(t \wedge T_r)) \right] = G_M(u_0) +2 \E \left[ \int_0^{t\wedge T_r} \sum_{m=1}^M \lambda_m \langle u(s),e_m\rangle_{L^2_0(\T)} \langle \partial_x A(u(s)) , e_m\rangle_{L^2_0(\T)} \dd s \right] \\
-2 \E \left[ \int_0^{t\wedge T_r} \sum_{m=1}^M \lambda_m \langle u(s),e_m\rangle_{L^2_0(\T)} \langle \nu\partial_{xx}u(s) , e_m\rangle_{L^2_0(\T)} \dd s \right] - \E \left[ \sum_{k\geq1} \int_0^{t\wedge T_r} \sum_{m=1}^M \lambda_m \langle g_k ,e_m \rangle_{L^2_0(\T)}^2 \dd s \right] .
\end{multline}

On one hand, we can rewrite the viscous term as follows:
\begin{align}
  \sum_{m=1}^M \lambda_m \langle u(s),e_m\rangle_{L^2_0(\T)}\langle \nu \partial_{xx} u(s),e_m\rangle_{L^2_0(\T)} &= \sum_{m=1}^M \lambda_m \langle u(s),e_m\rangle_{L^2_0(\T)}\langle \nu u(s),\partial_{xx}e_m\rangle_{L^2_0(\T)}\nonumber \\
  &= \sum_{m=1}^M \lambda_m \langle u(s),e_m\rangle_{L^2_0(\T)}\langle \nu u(s),\lambda_me_m\rangle_{L^2_0(\T)}\nonumber \\
  &= \nu \sum_{m=1}^M \lambda_m^2 \langle u(s),e_m\rangle_{L^2_0(\T)}^2 . \label{IBPvisc}
\end{align}
On the other hand, applying Young's inequality on the flux term, we get
\begin{multline}\label{IBPflux}
2 \E \left[ \int_0^{t\wedge T_r} \sum_{m=1}^M \lambda_m \langle u(s),e_m\rangle_{L^2_0(\T)} \langle \partial_x A(u(s)) , e_m\rangle_{L^2_0(\T)} \dd s \right] \\
\leq 2\nu \E \left[ \int_0^{t\wedge T_r} \sum_{m=1}^M \lambda_m^2 \langle u(s),e_m\rangle_{L^2_0(\T)}^2 \dd s \right] + \frac1{2\nu} \E \left[ \int_0^{t\wedge T_r} \sum_{m=1}^M \langle \partial_x A(u(s)) , e_m\rangle_{L^2_0(\T)}^2 \dd s \right] .
\end{multline}
Injecting \eqref{IBPvisc} and \eqref{IBPflux} into \eqref{itoH1E}, we get the inequality
\begin{equation}\label{BL}
\E \left[ G_M(u(t\wedge T_r)) \right] \leq G_M(u_0) + \frac1{2\nu} \E \left[ \int_0^{t\wedge T_r} \sum_{m=1}^M \langle \partial_x A(u(s)) , e_m\rangle_{L^2_0(\T)}^2 \dd s \right] \\
 - \E[t\wedge T_r]\sum_{k\geq1} \sum_{m=1}^M \lambda_m \langle g_k ,e_m \rangle_{L^2_0(\T)}^2 .
\end{equation}

\textbf{Step 3. Passing $M\to+\infty$.} From Proposition \ref{localboundedness}, for any $r\geq0$, there is a constant $L_r$ such that for all $M\geq1$, we have
\[ \sum_{m=1}^M \langle \partial_x A(u(s)) ,e_m\rangle_{L^2_0(\T)}^2 \leq \|\partial_xA(u(s))\|_{L^2_0(\T)}^2 \leq L_r \|u(s)\|_{H^1_0(\T)}^2 \leq rL_r . \]
Thus, we can use the dominated convergence theorem to let $M$ go to infinity in \eqref{BL} and we get
\begin{equation}\label{dynkinH1}
\E \left[ \| u(t\wedge T_r) \|_{H^1_0(\T)}^2 \right] \leq \| u_0 \|_{H^1_0(\T)}^2 +\frac1{2\nu} \E \left[ \int_0^{t\wedge T_r} \|\partial_xA(u(s)) \|_{L^2_0(\T)}^2 \dd s \right] + \E[t\wedge T_r] \sum_{k\geq1}\|g_k\|_{H^1_0(\T)}^2 .
\end{equation}

Since from Assumption \ref{ass:A}, $A'$ has polynomial growth, we can bound the second term of the right-hand side: using \eqref{PGA} and \eqref{intermapprox} with $p=2$ and $p=2p_A+2$, we get
\begin{align*}
\E \left[ \int_0^{t\wedge T_r} \| \partial_xA(u(s)) \|_{L^2_0(\T)}^2 \dd s \right] &= \E \left[ \int_0^{t \wedge T_r} \int_\T (\partial_x u(s))^2 A'(u(s))^2 \dd x \dd s \right] \\
&\leq 2C_1^2 \E \left[ \int_0^{t \wedge T_r} \int_\T (\partial_x u(s))^2 \left( 1+|u(s)|^{2p_A} \right) \dd x \dd s \right] \\
&= 2C_1^2 \left( \E \left[ \int_0^{t \wedge T_r} \| u(s) \|_{H^1_0(\T)}^2 \dd s \right] + \E \left[ \int_0^{t \wedge T_r} \int_\T (\partial_x u(s))^2 u(s)^{2p_A} \dd x\dd s \right] \right) \\
&\leq \frac{C_1^2}\nu \Big(\|u_0\|_{L^2_0(\T)}^2 + D_0 \E [t\wedge T_r] \\
&\quad + \frac{2}{(2p_A+2)(2p_A+1)} \| u_0 \|_{L^{2p_A+2}_0(\T)}^{2p_A+2} + D_0 \E \left[ \int_0^{t \wedge T_r} \|u(s)\|_{L^{2p_A}_0(\T)}^{2p_A} \dd s \right] \Big) .
\end{align*}
Applying now Lemma \ref{momentapprox}, we get
\begin{multline*}
\E \left[ \int_0^{t\wedge T_r} \| \partial_xA(u(s)) \|_{L^2_0(\T)}^2 \dd s \right] \leq \frac{C_1^2}\nu \Big( 2 \left( 1+ \| u_0 \|_{L^{2p_A+2}_0(\T)}^{2p_A+2} \right) + D_0 t \\
+D_0 C_5^{(2p_A)} \left( 1+ \|u_0\|_{L^{2p_A}_0(\T)}^{2p_A} \right) + C_6^{(2p_A)} t \Big) .
\end{multline*}
Injecting this last bound in \eqref{dynkinH1}, we get the wanted result.
\end{proof}

\begin{corollary}[Limit of $T_r$]\label{cor:Tr}
  Under Assumptions \ref{ass:A} and \ref{ass:g}, $T_r \to +\infty$ almost surely, and thus $T^*=+\infty$ almost surely.
\end{corollary}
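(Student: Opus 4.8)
The plan is to turn the $r$-uniform a priori bound of Lemma~\ref{H1approx} into a tail estimate for the hitting times $T_r$ defined in~\eqref{stoppingtime}, and then let $r\to\infty$. The decisive structural feature is that the right-hand side of Lemma~\ref{H1approx} does not depend on $r$: writing $K(t) := \|u_0\|_{H^1_0(\T)}^2 + C_7(1+\|u_0\|_{L^{2p_A+2}_0(\T)}^{2p_A+2}) + C_8 t$ and discarding the nonnegative $H^2_0(\T)$-integral term, one has $\E[\|u(t\wedge T_r)\|_{H^1_0(\T)}^2] \leq K(t)$ for every $t\geq0$ and every $r\geq0$. First I would record that $r\mapsto T_r$ is nondecreasing, so that it converges almost surely to a limit $T_\infty := \lim_{r\to\infty} T_r \in [0,+\infty]$; since $T_r \leq T^*$ for all $r$, we have $T_\infty \leq T^*$, and it will suffice to prove $T_\infty = +\infty$ almost surely.

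The core step is a Markov-type inequality. Fix $t\geq0$ and $r\geq0$, and work on the event $\{T_r \leq t\}$. I claim that there $T_r < T^*$, so that $u(T_r)$ is well-defined: indeed, if $T^* = +\infty$ this is clear because $T_r \leq t < +\infty$, while if $T^* < +\infty$ the blow-up alternative of Lemma~\ref{lem:solmax} gives $\limsup_{s\to T^*}\|u(s)\|_{H^1_0(\T)} = +\infty$, which forces the level $r$ to be attained at some time strictly before $T^*$, i.e. $T_r < T^*$. Since $s\mapsto \|u(s)\|_{H^1_0(\T)}^2$ is continuous on $[0,T^*)$, its first passage to level $r$ satisfies $\|u(T_r)\|_{H^1_0(\T)}^2 \geq r$ on $\{T_r \leq t\}$. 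As $t\wedge T_r = T_r$ on this event, it follows that $\E[\|u(t\wedge T_r)\|_{H^1_0(\T)}^2] \geq r\,\P(T_r \leq t)$, and combining with the bound above yields $\P(T_r \leq t) \leq K(t)/r$.

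To conclude, fix $t\geq0$. For every integer $n$ we have $T_n \leq T_\infty$, hence $\{T_\infty \leq t\} \subseteq \{T_n \leq t\}$ and therefore $\P(T_\infty \leq t) \leq K(t)/n$; letting $n\to\infty$ shows $\P(T_\infty \leq t) = 0$. Taking the union over integer values of $t$ gives $\P(T_\infty < +\infty) = 0$, that is, $T_r \to T_\infty = +\infty$ almost surely. Since $T_r \leq T^*$ for every $r$, this forces $T^* \geq T_\infty = +\infty$, so $T^* = +\infty$ almost surely, as claimed.

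I expect the only delicate point to be the justification that $\|u(T_r)\|_{H^1_0(\T)}^2 \geq r$ on $\{T_r \leq t\}$, which is precisely where the alternative in Lemma~\ref{lem:solmax} is needed to rule out the degenerate possibility $T_r = T^*$; every other ingredient is Markov's inequality, monotonicity of $T_r$ in $r$, and the $r$-independence of the estimate of Lemma~\ref{H1approx}.
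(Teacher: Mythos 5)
Your proof is correct and follows essentially the same route as the paper: Markov's inequality applied to $\E[\|u(t\wedge T_r)\|_{H^1_0(\T)}^2]$, combined with the $r$-uniform bound of Lemma~\ref{H1approx}, gives $\P(T_r\leq t)\leq K(t)/r$, and the conclusion follows since $t$ is arbitrary. Your extra care in justifying $\|u(T_r)\|_{H^1_0(\T)}^2\geq r$ on $\{T_r\leq t\}$ via the blow-up alternative, and the explicit use of monotonicity of $r\mapsto T_r$, only make explicit steps the paper leaves implicit.
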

\begin{proof}
Let $t\geq0$. Writing
\[ \P\left(T_r<t\right)=\P\left(\left\|u(t\wedge T_r)\right\|_{H^1_0(\T)}^2 \geq r\right) , \]
we get from Markov's inequality,
\[ \P\left(T_r<t\right)\leq\frac1r\E\left[\left\|u(t\wedge T_r)\right\|_{H^1_0(\T)}^2\right] . \]
We apply now Lemma \ref{H1approx} to get
\[ \P \left( T_r<t \right) \leq \frac1r \left( \|u_0\|_{H^1_0(\T)}^2 +C_7\left(1+\|u_0\|_{L^{2p_A+2}_0(\T)}^{2p_A+2} \right) +C_8t \right) \underset{r\to\infty}\longrightarrow0 . \]
Since $t$ has been chosen arbitrarily, it follows that almost surely, $T_r$ tends to $+\infty$ as $r\to+\infty$. Then, since $T_r \leq T^*$, we have $T^*=+\infty$ almost surely.
\end{proof}

\subsection{Proof of Theorem \ref{wellposedness}}\label{proofthm1} Under Assumptions \ref{ass:A} and \ref{ass:g}, let $u_0 \in H^2_0(\T)$, and $(T^*,(u(t))_{t \in [0,T^*)})$ be the maximal solution to Equation \eqref{SSCL} given by Lemma \ref{lem:solmax}. By Corollary \ref{cor:Tr}, $T^*=+\infty$ almost surely. Therefore, $(u(t))_{t \geq 0}$ is the unique (global) mild solution to Equation \eqref{SSCL}, and by Proposition \ref{prop:mild-strong}, it is also the unique (global) strong solution to this equation. It remains to check that this solution depends continuously on $u_0$.

\begin{lemma}[Continuous dependence on initial conditions]\label{cd}
If $( u_0^{(j)} )_{j\geq1}$ is a sequence of $H^2_0(\T)$ satisfying
\[ \lim_{j\to\infty} \left\| u_0 - u^{(j)}_0 \right\|_{H^2_0(\T)} = 0 , \]
then, denoting by $( u^{(j)}(t) )_{t\geq0,j\geq1}$ the family of associated solutions, for any $T\geq0$, we have almost surely
\[ \lim_{j\to\infty} \sup_{t\in[0,T]} \left\| u(t) - u^{(j)}(t) \right\|_{H^2_0(\T)} = 0 . \]
\end{lemma}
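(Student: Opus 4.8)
The plan is to work in the mild formulation and estimate the difference $w^{(j)}(t) := u(t) - u^{(j)}(t)$ using a Gr\"onwall-type argument in $H^2_0(\T)$. Since both $u$ and $u^{(j)}$ solve the same SPDE driven by the same $Q$-Wiener process, the stochastic convolutions cancel, and the difference solves a deterministic-looking integral equation pathwise:
\[
  w^{(j)}(t) = S_t\bigl(u_0 - u_0^{(j)}\bigr) - \int_0^t S_{t-s}\,\partial_x\bigl(A(u(s)) - A(u^{(j)}(s))\bigr)\,\dd s .
\]
The key difficulty is that the Lipschitz estimate on $v \mapsto \partial_x A(v)$ furnished by Proposition \ref{localboundedness} is only \emph{local}, i.e. valid on bounded subsets of $H^2_0(\T)$, whereas $A'$ has merely polynomial growth. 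So the whole argument must be localized on a random but almost-surely-finite radius.

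First I would fix a realization $\omega$ and a horizon $T$. By Theorem \ref{wellposedness} (already established via Corollary \ref{cor:Tr}), the solution $u$ has almost surely continuous trajectories in $H^2_0(\T)$, so $R := \sup_{t\in[0,T]}\|u(t)\|_{H^2_0(\T)}$ is almost surely finite. The natural strategy is to introduce, for each $j$, the stopping-type exit time
\[
  \sigma_j := \inf\Bigl\{t\in[0,T] : \bigl\|u^{(j)}(t)\bigr\|_{H^2_0(\T)} > R+1\Bigr\}\wedge T ,
\]
so that on $[0,\sigma_j]$ both solutions stay in the ball of radius $R+1$, where Proposition \ref{localboundedness} gives a genuine Lipschitz constant $L_{R+1}$. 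On this interval I would combine the $H^2_0 \to H^2_0$ contraction bound of Proposition \ref{prop:heatkernel}(2) with $s_1=s_2=2$, giving $\|S_{t-s}\partial_x(A(u)-A(u^{(j)}))\|_{H^2_0(\T)} \le C_4(t-s)^{-1/2}\,\|\partial_x(A(u(s))-A(u^{(j)}(s)))\|_{H^1_0(\T)}$, together with the $H^2_0$-Lipschitz continuity of $v\mapsto\partial_x A(v)$ from Proposition \ref{localboundedness} (case $s=2$), to obtain
\[
  \bigl\|w^{(j)}(t)\bigr\|_{H^2_0(\T)} \le \bigl\|u_0-u_0^{(j)}\bigr\|_{H^2_0(\T)} + C_4 L_{R+1}\int_0^t (t-s)^{-1/2}\,\bigl\|w^{(j)}(s)\bigr\|_{H^2_0(\T)}\,\dd s
\]
for $t\in[0,\sigma_j]$. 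A singular (Volterra) Gr\"onwall inequality of the form $f(t)\le a + b\int_0^t(t-s)^{-1/2}f(s)\,\dd s$ then yields $\sup_{t\in[0,\sigma_j]}\|w^{(j)}(t)\|_{H^2_0(\T)} \le C(R,T)\,\|u_0-u_0^{(j)}\|_{H^2_0(\T)}$, with a constant depending only on $R$ and $T$ but not on $j$.

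The remaining step, which I expect to be the main obstacle, is to promote this estimate on $[0,\sigma_j]$ to the full interval $[0,T]$ by showing $\sigma_j = T$ for $j$ large. The point is that once $\|u_0-u_0^{(j)}\|_{H^2_0(\T)}$ is small enough that $C(R,T)\,\|u_0-u_0^{(j)}\|_{H^2_0(\T)} < 1$, the bound on $[0,\sigma_j]$ forces $\|u^{(j)}(t)\|_{H^2_0(\T)} \le \|u(t)\|_{H^2_0(\T)} + \|w^{(j)}(t)\|_{H^2_0(\T)} < R+1$ throughout $[0,\sigma_j]$; hence the trajectory of $u^{(j)}$ cannot actually reach the threshold $R+1$ before $T$, so by continuity $\sigma_j = T$. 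This is a standard bootstrap, but it must be carried out carefully because the radius $R$ is random and the index $j_0$ beyond which $\sigma_j = T$ therefore depends on $\omega$; since for each fixed $\omega$ the convergence $\|u_0-u_0^{(j)}\|_{H^2_0(\T)}\to 0$ is genuine (the initial data are deterministic), the almost-sure uniform convergence $\sup_{t\in[0,T]}\|u(t)-u^{(j)}(t)\|_{H^2_0(\T)}\to 0$ follows for $j\ge j_0(\omega)$. I would finish by noting that the whole argument is pathwise, so no extra measurability or expectation machinery is needed.
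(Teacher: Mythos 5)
Your proposal is correct and follows essentially the same route as the paper: subtract the mild formulations so the noise cancels, localize on a ball of $H^2_0(\T)$ on which Proposition~\ref{localboundedness} furnishes a genuine Lipschitz constant, run a singular Gr\"onwall estimate (the paper iterates the inequality once and uses $\int (t-s)^{-1/2}(s-r)^{-1/2}\,\dd s=\pi$ where you invoke a generic Volterra--Gr\"onwall lemma), and bootstrap to show that the exit time of $u^{(j)}$ from that ball eventually reaches $T$. One trivial slip: the smoothing bound you display is Proposition~\ref{prop:heatkernel}(2) with $s_1=1$, $s_2=2$, not $s_1=s_2=2$.
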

\begin{proof}
Let us fix a time horizon $T>0$. Subtracting the mild formulations of $\left(u(t)\right)_{t\geq0}$ and $(u^{(j)}(t))_{t\geq0}$ given by Proposition \ref{prop:mild-strong} and taking the $H^2_0(\T)$-norm, we get by the triangle inequality and Proposition \ref{prop:heatkernel}, for all $t\in[0,T]$,
\begin{align}
\left\| u(t)-u^{(j)}(t) \right\|_{H^2_0(\T)} &\leq \left\| S_t\left(u_0-u^{(j)}_0\right) \right\|_{H^2_0(\T)} + \int_0^t \left\| S_{t-s} \partial_x \left( A(u(s))-A\left(u^{(j)}(s)\right) \right) \right\|_{H^2_0(\T)}  \nonumber \\
&\leq \left\| u_0-u^{(j)}_0 \right\|_{H^2_0(\T)} + \int_0^t \frac{C_4}{\sqrt{t-s}} \left\| \partial_xA(u(s))-\partial_xA\left(u^{(j)}(s)\right) \right\|_{H^1_0(\T)} \dd s . \label{mildcoupling}
\end{align}
Now, for any $M>0$, we define the stopping times
\[ \tau_M := \inf \left\{ t\geq0: \| u(t) \|_{H^2_0(\T)}\geq M \right\} , \quad \tau^{(j)}_M := \inf \left\{ t\geq0: \| u^{(j)}(t) \|_{H^2_0(\T)} \geq M \right\} ,\quad j\in \N , \]
and we denote by $L_M$, according to Proposition \ref{localboundedness}, the Lipschitz constant of the mapping $v \in H^2_0(\T) \mapsto \partial_x A(v) \in H^1_0(\T)$ over the centered ball in $H^2_0(\T)$ of radius $M$. For an arbitrarily fixed $t\in[0,T]$, the inequality \eqref{mildcoupling} implies
\begin{multline*}
\left\| u\left(t\wedge\tau_M\wedge\tau^{(j)}_M\right)-u^{(j)}\left(t\wedge\tau_M\wedge\tau^{(j)}_M\right) \right\|_{H^2_0(\T)} \leq \left\| u_0-u^{(j)}_0 \right\|_{H^2_0(\T)}\\
+ \int_0^{t\wedge\tau_M\wedge\tau^{(j)}_M} \frac{C_4L_M}{\sqrt{t\wedge\tau_M\wedge\tau^{(j)}_M-s}} \left\| u(s)-u^{(j)}(s) \right\|_{H^2_0(\T)} \dd s .
\end{multline*}
In the next step, we iterate this last inequality and apply the Fubini theorem on the double time integral:
\begin{align*}
&\left\| u\left(t\wedge\tau_M\wedge\tau^{(j)}_M\right)-u^{(j)}\left(t\wedge\tau_M\wedge\tau^{(j)}_M\right) \right\|_{H^2_0(\T)} \\
&\leq \left\| u_0-u^{(j)}_0 \right\|_{H^2_0(\T)} \left( 1+2\sqrt{t\wedge\tau_M\wedge\tau^{(j)}_M}C_4L_M \right) \\
&\quad+ C_4^2L_M^2 \int_0^{t\wedge\tau_M\wedge\tau^{(j)}_M}\int_0^s \frac1{\sqrt{(t\wedge\tau_M\wedge\tau^{(j)}_M-s)(s-r)}} \left\| u(r)-u^{(j)}(r) \right\|_{H^2_0(\T)} \dd r \dd s \\
&\leq \left\| u_0-u^{(j)}_0 \right\|_{H^2_0(\T)} \left( 1+2\sqrt T C_4L_M \right) \\
&\quad+ C_4^2L_M^2 \int_0^{t\wedge\tau_M\wedge\tau^{(j)}_M}\left( \int_r^{t\wedge\tau_M\wedge\tau^{(j)}_M} \frac1{\sqrt{(t\wedge\tau_M\wedge\tau^{(j)}_M-s)(s-r)}} \dd s \right) \left\| u(r)-u^{(j)}(r) \right\|_{H^2_0(\T)} \dd r .
\end{align*}
However, by a change of variable, we have
\[ \int_s^{t\wedge\tau_M\wedge\tau^{(j)}_M} \frac1{\sqrt{(t\wedge\tau_M\wedge\tau^{(j)}_M-r)(r-s)}} \dd r = \int_{-1}^1 \frac1{\sqrt{1-y^2}} \dd y = \pi . \]
Hence, Gr\"onwall's lemma yields the following control
\[ \left\| u\left(t\wedge\tau_M\wedge\tau^{(j)}_M\right)-u^{(j)}\left(t\wedge\tau_M\wedge\tau^{(j)}_M\right) \right\|_{H^2_0(\T)} \leq \left\| u_0-u^{(j)}_0 \right\|_{H^2_0(\T)} \left( 1+2\sqrt TC_4L_M \right) \e^{C_4^2L_M^2\pi t\wedge\tau_M\wedge\tau^{(j)}_M} . \]
It follows from this inequality that $\liminf_{j\to\infty} \tau^{(j)}_M \geq \tau_M\wedge T$. Indeed, assuming the opposite, we would have (along a subsequence)
\[ \left\| u \left( \tau^{(j)}_M \right) -u^{(j)}\left(\tau^{(j)}_M\right) \right\|_{H^2_0(\T)} \leq \left\| u_0-u^{(j)}_0 \right\|_{H^2_0(\T)} \left(1+2\sqrt TC_4L_M\right)\e^{C_4^2L_M^2T} \underset{j\to\infty}\longrightarrow 0 , \]
which would imply
\[ M \leq \lim_{j\to\infty} \left\| u^{(j)} \left( \tau^{(j)}_M \right) \right\|_{H^2_0(\T)} = \lim_{j\to\infty} \left\| u \left( \tau^{(j)}_M \right) \right\|_{H^2_0(\T)} < M . \]
Hence, necessarily, beyond a certain rank $j$, we have
\[ \left\| u(t\wedge\tau_M) -u^{(j)}(t\wedge\tau_M) \right\|_{H^2_0(\T)} \leq \left\| u_0-u^{(j)}_0 \right\|_{H^2_0(\T)} \left( 1+2\sqrt TC_4L_M \right) \e^{C_4^2L_M^2t\wedge\tau_M} . \]
Since the solutions of \eqref{strongsolution} do not explode, the stopping time $\tau_M$ tends almost surely to $+\infty$ as $M$ tends to $+\infty$. As a consequence, there exists $M_T>0$ such that $T<\tau_{M_T}$ almost surely, so that for all $t\in[0,T]$,
\[ \left\| u(t) -u^{(j)}(t) \right\|_{H^2_0(\T)} \leq \left\| u_0-u^{(j)}_0 \right\|_{H^2_0(\T)} \left( 1+2\sqrt TC_4L_{M_T} \right) \e^{C_4^2L_{M_T}^2T} . \]
Hence the result.
\end{proof}

\section{Invariant measure}\label{IM}

This section is dedicated to the proof of Theorem \ref{thm:im}. The existence of an invariant measure is proven in Subsection \ref{existenceIM} using the Krylov-Bogoliubov theorem, whereas the uniqueness is addressed through a coupling argument relying on the $L^1_0(\T)$-contraction property established in Proposition \ref{L1C}.

The proof of existence of an invariant measure we provide in the next subsection relies plainly on the presence of viscosity. Indeed, the viscous term provides the process $u(t)$ with a dissipative -- and thus a more stable -- behaviour. Still, it has to be borne in mind that when the flux term is nonlinear enough, the presence of a viscous term is not a necessary condition for the stability of the underlying stochastic process. On the physical side, in his theory of turbulent flows \cite{Kol41a,Kol41b}, Kolmogorov already predicted this idea: the statistical distribution of scales of intermediate size in turbulence are not determined by the viscosity coefficient. On the theoretical side, the same idea was validated theoretically by powerful results on the invariant measure for the inviscid stochastic Burgers' equation \cite{EKMS00} and, quite a few years later, for inviscid stochastic conservation laws with "non-degenerate" flux \cite{DV15}. However, our framework differs substantially from the inviscid case in the sense that our stability results are driven by regularity issues which cannot be tackled without viscosity.

\subsection{Preliminary results} By Definition \ref{defi:IM}, an invariant measure for Equation \eqref{SSCL} is a Borel probability measure on $H^2_0(\T)$. Our proofs of existence and uniqueness however involve estimates in various spaces, namely $L^1_0(\T)$, $L^2_0(\T)$ and $H^1_0(\T)$. In particular, we shall manipulate and identify Borel probability measures on these spaces. We first clarify the relation between the associated Borel $\sigma$-fields thanks to the following result. For any metric space $E$, we respectively denote by $\mathcal{B}(E)$ and $\mathcal{P}(E)$ the Borel $\sigma$-field and the set of Borel probability measures on $E$.

\begin{lemma}[Borel probability measures on $L^q_0(\T)$ and $H^s_0(\T)$]\label{lem:sigmafields}
  For all $q \in [1,2]$ and $s \geq 1$, $\mathcal{B}(H^s_0(\T)) = \{B \cap H^s_0(\T) : B \in \mathcal{B}(L^q_0(\T))\}$. As a consequence:
  \begin{enumerate}
    \item[(1)] for any $\mu \in \mathcal{P}(H^s_0(\T))$, the mapping $\tilde\mu(\cdot) = \mu(\cdot\cap H^s_0(\T))$ defines a Borel probability measure on $L^q_0(\T)$;
    \item[(2)] conversely, for any $\tilde\mu \in \mathcal{P}(L^q_0(\T))$ which gives full weight to $H^s_0(\T)$, there exists a unique $\mu \in \mathcal{P}(H^s_0(\T))$ such that $\tilde\mu(B) = \mu(B \cap H^s_0(\T))$ for any $B \in \mathcal{B}(L^q_0(\T))$.
  \end{enumerate}
\end{lemma}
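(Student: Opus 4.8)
The plan is to establish the key topological fact that $H^s_0(\T)$ is a Borel subset of $L^q_0(\T)$ and, more precisely, that its trace $\sigma$-field coincides with its own Borel $\sigma$-field; the two numbered consequences then follow by standard measure-theoretic arguments. The natural tool here is the \emph{Kuratowski theorem} (or the Lusin--Souslin theorem), which states that if $f : X \to Y$ is an injective Borel-measurable map between Polish spaces, then $f$ maps Borel sets to Borel sets and $f$ is a Borel isomorphism onto its image. I would apply this to the continuous inclusion map $\iota : H^s_0(\T) \hookrightarrow L^q_0(\T)$, which is injective and continuous (since $\|v\|_{L^q_0(\T)} \leq \|v\|_{H^1_0(\T)} \leq \|v\|_{H^s_0(\T)}$ for $s \geq 1$ by \eqref{strongerpoincare} and the Poincar\'e inequalities recalled in the introduction), hence Borel. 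Both $H^s_0(\T)$ and $L^q_0(\T)$ are separable Banach spaces, therefore Polish.

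\textbf{Main steps.} First I would verify that $\iota$ is continuous and injective, so that by Kuratowski's theorem $\iota(B)$ is a Borel subset of $L^q_0(\T)$ for every $B \in \mathcal{B}(H^s_0(\T))$; in particular, taking $B = H^s_0(\T)$, the image $H^s_0(\T)$ is itself a Borel subset of $L^q_0(\T)$. Since $\iota$ is a Borel isomorphism onto its image, the Borel $\sigma$-field of $H^s_0(\T)$ (for its own finer topology) is carried onto the trace $\sigma$-field $\{B \cap H^s_0(\T) : B \in \mathcal{B}(L^q_0(\T))\}$. One inclusion, namely $\{B \cap H^s_0(\T) : B \in \mathcal{B}(L^q_0(\T))\} \subset \mathcal{B}(H^s_0(\T))$, is immediate from continuity of $\iota$ (preimages of $L^q_0(\T)$-Borel sets under the continuous inclusion are $H^s_0(\T)$-Borel); the reverse inclusion is exactly the content of Kuratowski's theorem. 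This establishes the claimed equality of $\sigma$-fields.

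\textbf{The two consequences.} For assertion (1), given $\mu \in \mathcal{P}(H^s_0(\T))$, I would define $\tilde\mu(B) := \mu(B \cap H^s_0(\T))$ for $B \in \mathcal{B}(L^q_0(\T))$; the equality of $\sigma$-fields guarantees that $B \cap H^s_0(\T) \in \mathcal{B}(H^s_0(\T))$, so $\tilde\mu$ is well-defined, and countable additivity and the normalization $\tilde\mu(L^q_0(\T)) = \mu(H^s_0(\T)) = 1$ are inherited directly from $\mu$. For assertion (2), given $\tilde\mu \in \mathcal{P}(L^q_0(\T))$ with $\tilde\mu(H^s_0(\T)) = 1$ (which makes sense precisely because $H^s_0(\T)$ is Borel in $L^q_0(\T)$), I would set $\mu(C) := \tilde\mu(B)$ for any $C \in \mathcal{B}(H^s_0(\T))$, where $B \in \mathcal{B}(L^q_0(\T))$ is chosen with $C = B \cap H^s_0(\T)$; well-definedness follows because if $B_1 \cap H^s_0(\T) = B_2 \cap H^s_0(\T)$ then $\tilde\mu(B_1) = \tilde\mu(B_2)$, using the full-weight hypothesis $\tilde\mu(H^s_0(\T)) = 1$, and uniqueness is immediate since $\mu$ is forced on every set of the form $B \cap H^s_0(\T)$.

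\textbf{Main obstacle.} The only genuinely nontrivial point is the reverse inclusion of $\sigma$-fields, that is, showing that a set which is Borel for the finer $H^s_0(\T)$-topology is the trace of some $L^q_0(\T)$-Borel set; naively one only knows that the $H^s_0(\T)$ topology is \emph{finer} than the subspace topology inherited from $L^q_0(\T)$, so there is no elementary reason for the two Borel structures to agree. It is precisely here that the Polish structure is essential and where Kuratowski's (or Lusin--Souslin's) theorem does the real work: injectivity plus Borel measurability of $\iota$ upgrades to the preservation of Borel sets and to $\iota$ being a Borel isomorphism onto its image. I would make sure to invoke this descriptive-set-theoretic input explicitly rather than attempting a bare-hands argument, since without it the equality of $\sigma$-fields could plausibly fail.
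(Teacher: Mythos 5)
Your proof is correct, but it takes a genuinely different route from the paper's. Where you invoke the Kuratowski/Lusin--Souslin theorem for the injective continuous inclusion $\iota : H^s_0(\T) \hookrightarrow L^q_0(\T)$ between Polish spaces, the paper gives an elementary, self-contained argument for the nontrivial inclusion $\mathcal{B}(H^s_0(\T)) \subset \mathcal{T}$: since $H^s_0(\T)$ is separable, its Borel $\sigma$-field is generated by closed balls, and every closed ball of $H^s_0(\T)$ is closed in $L^q_0(\T)$ because the $H^s_0(\T)$-norm is lower semi-continuous for the $L^q_0(\T)$-topology (a consequence of Fatou's lemma applied to the Fourier-coefficient expression \eqref{fractional}); minimality of the generated $\sigma$-field then concludes. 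Your approach buys generality -- it works for any injective Borel map between Polish spaces with no structural information about the norms -- and it additionally yields that $H^s_0(\T)$ is itself a Borel subset of $L^q_0(\T)$, which cleans up the statement of the full-weight hypothesis in consequence (2) (the paper instead phrases it as the existence of some $\tilde{B} \in \mathcal{B}(L^q_0(\T))$ contained in $H^s_0(\T)$ with $\tilde\mu(\tilde{B})=1$). The paper's approach buys elementarity: no descriptive set theory is needed, only the lower semi-continuity of the stronger norm. One small caveat: your closing remark that ``without it the equality of $\sigma$-fields could plausibly fail'' is overstated -- the paper's bare-hands argument shows the equality holds for reasons that do not require the Lusin--Souslin machinery, precisely because closed balls of the finer space are closed in the coarser one. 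The handling of consequences (1) and (2) is essentially identical in both proofs.
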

\begin{proof}
  Let $q \in [1,2]$ and $s \geq 1$. The set $\mathcal T$ defined by
  \[ \mathcal{T} = \left\{ B \cap H^s_0(\T) : B \in \mathcal{B}\left( L^q_0(\T) \right)\right\} . \]
  is a $\sigma$-field on $H^s_0(\T)$, called the \textit{trace $\sigma$-field} of $H^s_0(\T)$ in $\mathcal{B}(L^q_0(\T))$.
  
  (1) We denote by $I$ the injection $H^s_0(\T) \to L^q_0(\T)$, so that $\mathcal{T} = \{I^{-1}(B) : B \in \mathcal{B}(L^q_0(\T))\}$. Since $I$ is continuous, and therefore Borel measurable, we have $\mathcal{T} \subset \mathcal{B}(H^s_0(\T))$. Thus, for any $\mu \in \mathcal{P}(H^s_0(\T))$, the pushforward measure $\tilde\mu$ defined by
  \[ \tilde\mu(B) := \mu \circ I^{-1}(B) = \mu\left(B\cap H^s_0(\T)\right) , \qquad B \in \mathcal B \left( L^q_0(\T) \right) , \]
  is a Borel probability measure on $L^q_0(\T)$.
  
  (2) Let us first notice that since $H^s_0(\T)$ is separable, the Borel $\sigma$-field $\mathcal{B}(H^s_0(\T))$ is the smallest $\sigma$-field on $H^s_0(\T)$ containing all closed balls. Let $A \subset H^s_0(\T)$ be such a ball. Since the $H^s_0(\T)$-norm is lower semi-continuous on $L^q_0(\T)$, then $A$ is closed in $L^q_0(\T)$ as a level set of a lower semi-continuous function, and thus $A \in \mathcal B (L^q_0(\T))$. It is then clear that $A \in \mathcal{T}$, which by the minimality property of $\mathcal{B}(H^s_0(\T))$ entails $\mathcal{B}(H^s_0(\T)) \subset \mathcal{T}$, and thus $\mathcal{B}(H^s_0(\T)) = \mathcal{T}$. 
  
  Now let $\tilde\mu$ be a Borel probability measure on $L^q_0(\T)$ which gives full weight to $H^s_0(\T)$, that is to say such that there exists $\tilde{B} \in \mathcal{B}(L^q_0(\T))$ such that $\tilde{B} \subset H^s_0(\T)$ and $\tilde\mu(\tilde{B})=1$. Let us define the Borel probability measure $\mu$ on $H^s_0(\T)$ by
  \[ \mu(B \cap H^s_0(\T)) := \tilde\mu(B) , \qquad B \in \mathcal B \left( L^q_0(\T) \right) . \]
  Notice that this definition is not ambiguous, because the identity $\mathcal{T} = \mathcal{B}(H^s_0(\T))$ ensures that any element of $\mathcal{B}(H^s_0(\T))$ writes under the form $B \cap H^s_0(\T)$ for some $B \in \mathcal{B}(L^q_0(\T))$; besides, if $B_1, B_2 \in \mathcal{B}(L^q_0(\T))$ are such that $B_1 \cap H^s_0(\T) = B_2 \cap H^s_0(\T)$, then $\tilde{\mu}(B_1) = \tilde{\mu}(B_1 \cap \tilde{B}) = \tilde{\mu}(B_2 \cap \tilde{B}) = \tilde{\mu}(B_2)$ because the identity $B_1 \cap H^s_0(\T) = B_2 \cap H^s_0(\T)$ implies that $B_1 \cap \tilde{B} = B_2 \cap \tilde{B}$. Finally, the fact that any $\nu \in \mathcal P(H^s_0(\T))$ such that $\tilde\mu(B) = \nu(B \cap H^s_0(\T))$ for any $B \in \mathcal{B}(L^q_0(\T))$ needs to coincide with $\mu$ follows again from the identity $\mathcal{B}(H^s_0(\T)) = \mathcal{T}$. 
\end{proof}

To prove Theorem \ref{thm:im}, we will need a standard property of scalar conservation laws, namely the $L^1_0(\T)$-contraction. In the stochastic setting, we mention that a similar proof of the following proposition is done in \cite[Theorem 6.1]{Bor13}, but in the case where the flux function is $C^\infty$.

\begin{prop}[$L^1_0(\T)$-contraction]\label{L1C}
Under Assumptions \ref{ass:A} and \ref{ass:g}, let $(u(t))_{t\geq0}$ and $(v(t))_{t\geq0}$ be two strong solutions of \eqref{SSCL} starting from different initial conditions $u_0$ and $v_0$. Then, almost surely and for every $0\leq s\leq t$, we have
\[ \| u(t) - v(t) \|_{L^1_0(\T)} \leq \| u(s) - v(s) \|_{L^1_0(\T)} . \]
\end{prop}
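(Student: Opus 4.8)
The plan is to exploit the fact that the forcing is additive and common to both solutions. Writing $w := u - v$ and subtracting the two integral identities~\eqref{strongsolution}, the stochastic term $W^Q$ cancels \emph{exactly}, so that $w$ solves, pathwise and in the strong (Bochner) sense, the noise-free equation
\[ w(t) = w(s) + \int_s^t \left( -\partial_x\left(A(u(\tau))-A(v(\tau))\right) + \nu\partial_{xx}w(\tau) \right)\dd\tau . \]
Since $u,v \in C([0,+\infty),H^2_0(\T))$ almost surely, on any compact time interval both are bounded in $H^2_0(\T) \subset C^1(\T)$, and by Proposition~\ref{localboundedness} the integrand $\varphi := -\partial_x(A(u)-A(v)) + \nu\partial_{xx}w$ is continuous in $L^2_0(\T)$; hence $t \mapsto w(t)$ is $C^1$ as an $L^2_0(\T)$-valued map with derivative $\varphi$. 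I would then fix a convex, even, smooth approximation $\eta_\delta$ of the absolute value (for instance $\eta_\delta(r) = \sqrt{r^2+\delta^2}$), with $\eta_\delta'' \geq 0$, $|\eta_\delta'| \leq 1$ and $\eta_\delta(r) \to |r|$, and study the evolution of $\int_\T \eta_\delta(w(t))\dd x$.

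The functional $\psi \mapsto \int_\T \eta_\delta(\psi)\dd x$ is $C^1$ on $L^2_0(\T)$, so the chain rule gives
\[ \frac{\dd}{\dd t}\int_\T \eta_\delta(w)\dd x = \int_\T \eta_\delta'(w)\left( -\partial_x(A(u)-A(v)) + \nu\partial_{xx}w \right)\dd x . \]
For the viscous contribution, an integration by parts yields $\nu\int_\T \eta_\delta'(w)\partial_{xx}w\dd x = -\nu\int_\T \eta_\delta''(w)(\partial_x w)^2\dd x \leq 0$ by convexity of $\eta_\delta$; this is exactly the dissipative mechanism alluded to in the introduction, and it is what makes the argument close. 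It remains to control the flux contribution, which is the crux of the proof.

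For the flux term, I would integrate by parts once and write $A(u)-A(v) = a\,w$ with the averaged velocity $a := \int_0^1 A'(v+\theta w)\dd\theta$, so that
\[ -\int_\T \eta_\delta'(w)\partial_x(A(u)-A(v))\dd x = \int_\T \eta_\delta''(w)\,w\,(\partial_x w)\,a\dd x = \int_\T a\,\partial_x\Phi_\delta(w)\dd x = -\int_\T (\partial_x a)\,\Phi_\delta(w)\dd x , \]
where $\Phi_\delta(r) := \int_0^r \rho\,\eta_\delta''(\rho)\dd\rho = r\eta_\delta'(r)-\eta_\delta(r)+\eta_\delta(0)$. The key computation is that $\Phi_\delta$ is uniformly small: for the choice above one checks $0 \leq \Phi_\delta(r) \leq \delta$ for all $r$, so $\|\Phi_\delta\|_{L^\infty_0(\T)} \to 0$. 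Since $A \in C^2$ and $u,v$ are $C^1$ in space, $\partial_x a = \int_0^1 A''(v+\theta w)(\partial_x v + \theta\partial_x w)\dd\theta$ is bounded on $\T$, uniformly in $\tau$ over any compact time interval; hence the flux term is bounded in absolute value by $\delta\,\|\partial_x a\|_{L^1_0(\T)}$. Integrating the differential inequality from $s$ to $t$, discarding the nonpositive viscous term, and letting $\delta \to 0$ gives $\int_\T |w(t)|\dd x \leq \int_\T |w(s)|\dd x$, which is the claim since $w=u-v$ has zero spatial mean and $\|w\|_{L^1_0(\T)} = \int_\T |w|\dd x$.

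The main obstacle is precisely the treatment of the flux term: unlike the viscous term it has no definite sign, and the classical $L^1_0(\T)$-contraction for scalar conservation laws must be recovered here \emph{without} any convexity or monotonicity assumption on $A$. The device that makes it work is the primitive $\Phi_\delta$ together with the spatial $C^1$ regularity of the (viscous, $H^2_0(\T)$-valued) solutions, which turns the flux contribution into an $O(\delta)$ remainder rather than requiring a delicate entropy or doubling-of-variables argument.
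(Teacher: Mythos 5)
Your proposal is correct, and it reaches the contraction by a genuinely different route than the paper. The paper regularises the absolute value with the piecewise linear Kruzhkov-type approximation $\sgn_\eta$, integrates by parts only once, drops the viscous term by its sign, and then kills the flux contribution in the limit $\eta \to 0$ by dominated convergence: the flux integrand is supported on $\{|u-v|\leq\eta\}$, is dominated by $L_M|\partial_x(u-v)|$ via the local Lipschitz continuity of $A$, and tends to zero pointwise. You instead take a smooth approximation $\eta_\delta(r)=\sqrt{r^2+\delta^2}$, perform a second integration by parts to move the spatial derivative onto the averaged velocity $a=\int_0^1 A'(v+\theta w)\dd\theta$, and exploit the explicit primitive $\Phi_\delta$ with the uniform bound $0\leq\Phi_\delta\leq\delta$ to make the flux term an $O(\delta)$ remainder per unit time. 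The trade-off is that your argument leans harder on regularity -- it needs $A\in C^2$ to differentiate $a$ and the $C^1(\T)$ spatial regularity of $u,v$ (both available here, since $A$ satisfies Assumption~\ref{ass:A} and the solutions are $H^2_0(\T)$-valued) -- whereas the paper's version of this step would survive with $A$ merely locally Lipschitz; in exchange you obtain a quantitative error bound in $\delta$ rather than a soft convergence statement. Both proofs rest on the same two structural facts: the exact cancellation of the additive noise upon subtraction, and the nonpositivity of the viscous term against a convex approximation of $|\cdot|$.
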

\begin{proof}
We define a continuous approximation of the sign function by setting for all $\eta>0$,
\begin{equation*}
\sgn_\eta(u) :=
\begin{cases}
\frac u\eta , \quad &u\in[-\eta,\eta],\\
1,\quad &u\geq\eta,\\
-1,\quad &u\leq\eta,
\end{cases}
\end{equation*}
which gives rise to the following continuously differentiable approximation of the absolute value function:
\[ |v|_\eta:= \int_0^v \sgn_\eta(u)\dd u , \quad v \in \R . \]
Let $0\leq s \leq t$. We have
\begin{align}
&\int_\T \left| u(t)-v(t) \right|_\eta \dd x - \int_\T | u(s)-v(s)|_\eta \dd x = \int_\T \int_s^t \frac\dd{\dd r} \left| u(r)-v(r)\right|_\eta \dd r \dd x \label{regcontraction} \\
&= \int_\T \int_s^t \frac\dd{\dd r} \left( u(r)-v(r) \right) \sgn_\eta (u(r)-v(r)) \dd r \dd x \nonumber \\
&= \int_s^t \int_\T \left( A(u(r)) -A(v(r))-\nu\partial_x\left( u(r)-v(r)\right) \right) \partial_x \left( \sgn_\eta (u(r)-v(r)) \right) \dd x \dd r \nonumber \\
&\hspace{5cm}\text{(where we used the Fubini theorem and an integration by parts)} \nonumber \\
&= \int_s^t \int_\T \left( A(u(r)) -A(v(r))-\nu\partial_x\left( u(r)-v(r)\right) \right) \partial_x\left( u(r)-v(r)\right) \frac1\eta \ind{|u(r)-v(r)|\leq\eta}\dd x \dd r \nonumber \\
&\leq \int_s^t \int_\T (A(u(r))-A(v(r))) \partial_x (u(r)-v(r))\frac1\eta\ind{|u(r)-v(r)|\leq\eta}\dd x \dd r \nonumber
\end{align}
We fix
\[ M:= \sup_{r\in[s,t]} \|u(r)\|_{L^\infty_0(\T)} \vee \sup_{r\in[s,t]} \|v(r)\|_{L^\infty_0(\T)} ,\]
and we denote by $L_M$ a Lipschitz constant of $A$ over the interval $[-M,M]$. Since $(u(r))_{r\in[s,t]}$ and $(v(r))_{r\in[s,t]}$ belong to $C([s,t],H^2_0(\T))$ almost surely, then $M$ is finite almost surely and for all $r\in[s,t]$
\[ |A(u(r))-A(v(r))| |\partial_x(u(r)-v(r))| \frac1\eta \ind{|u(r)-v(r)|\leq\eta} \leq L_M \left|\partial_x(u(r)-v(r))\right| , \]
with
\[ \int_s^t \int_\T L_M \left| \partial_x(u(r)-v(r))\right| \dd x\dd r < +\infty . \]
Thus, we get from the dominated convergence theorem:
\begin{multline}
\lim_{\eta\to0} \int_s^t \int_\T (A(u(r))-A(v(r))) \partial_x (u(r)-v(r))\frac1\eta\ind{|u(r)-v(r)|\leq\eta}\dd x \dd r \\
= \int_s^t \int_\T \lim_{\eta\to0}(A(u(r))-A(v(r))) \partial_x (u(r)-v(r))\frac1\eta\ind{|u(r)-v(r)|\leq\eta}\dd x \dd r = 0 .
\end{multline}
As for the left-hand side of \eqref{regcontraction}, noticing that $|\cdot|_\eta$ increases to $|\cdot|$ as $\eta$ decreases, we have from the monotone convergence theorem
\[ \lim_{\eta\to0} \int_\T\left| u(t)-v(t) \right|_\eta \dd x = \left\| u(t)-v(t) \right\|_{L^1_0(\T)} , \qquad \lim_{\eta\to0}\int_\T | u(s)-v(s)|_\eta\dd x = \|u(s)-v(s)\|_{L^1_0(\T)} . \]
Hence, \eqref{regcontraction} yields the wanted result.
\end{proof}

\subsection{Existence}\label{existenceIM}

From the semigroup $(P_t)_{t\geq0}$ introduced in Subsection \ref{ss:mainresults}, we define its time-averaged semigroup $(R_T)_{T\geq0}$ by $R_0=\mathrm{Id}$, and for all $T>0$,
\[ R_T\varphi (u_0) = \frac1T \int_0^T P_t\varphi (u_0) \dd t , \qquad \varphi \in C_b(H^2_0(\T)) , \quad u_0\in H^2_0(\T) , \]
\[ \quad R^*_T\alpha (\Gamma) =\frac1T \int_0^T P^*_t\alpha(\Gamma)\dd t, \qquad \alpha \in \mathcal P (H^2_0(\T)) , \quad \Gamma \in \mathcal B (H^2_0(\T)). \]

Following the first part of Lemma \ref{lem:sigmafields}, for any $\alpha \in \mathcal P (H^2_0(\T))$ and $T \geq 0$, we denote by $\tilde{R}^*_T\alpha$ the Borel probability measure on $L^1_0(\T)$ defined by $\tilde{R}^*_T\alpha(\cdot) = R^*_T\alpha(\cdot \cap H^2_0(\T))$.

\begin{lemma}\label{tightness}
Under Assumptions \ref{ass:A} and \ref{ass:g}, for any $u_0 \in H^2_0(\T)$, there exists an increasing sequence $T^n \overset{n\to\infty}\longrightarrow +\infty$ and a probability measure $\tilde{\mu} \in \mathcal P(L^1_0(\T))$, such that the sequence of measures $(\tilde{R}^*_{T^n} \delta_{u_0})_{n\geq1}$ converges weakly to $\tilde{\mu}$ in $\mathcal P(L^1_0(\T))$.
\end{lemma}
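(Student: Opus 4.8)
The plan is to follow the Krylov--Bogoliubov strategy: I will prove that the family of time-averaged measures $(\tilde R^*_T\delta_{u_0})_{T\geq 1}$ is tight in $\mathcal P(L^1_0(\T))$, and then invoke Prokhorov's theorem to extract a weakly convergent subsequence. The entire argument rests on a single uniform-in-time moment bound, which I read off from the estimates of Subsection~\ref{sec:estimates}.

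First I would convert the stopped estimate of Lemma~\ref{H1approx} into a bound on the genuine global solution. Since $T_r\uparrow+\infty$ almost surely by Corollary~\ref{cor:Tr}, for fixed $t$ the quantity $\int_0^{t\wedge T_r}\|u(s)\|_{H^2_0(\T)}^2\,\dd s$ increases to $\int_0^{t}\|u(s)\|_{H^2_0(\T)}^2\,\dd s$; dropping the nonnegative term $\E[\|u(t\wedge T_r)\|_{H^1_0(\T)}^2]$ on the left-hand side of Lemma~\ref{H1approx} and letting $r\to\infty$ by monotone convergence, I obtain for all $t\geq 0$
\[
\nu\,\E\Big[\int_0^{t}\|u(s)\|_{H^2_0(\T)}^2\,\dd s\Big] \leq \|u_0\|_{H^1_0(\T)}^2 + C_7\big(1+\|u_0\|_{L^{2p_A+2}_0(\T)}^{2p_A+2}\big) + C_8 t .
\]
Dividing by $T$ and using Tonelli's theorem to identify $\frac1T\int_0^T \E_{u_0}[\|u(t)\|_{H^2_0(\T)}^2]\,\dd t$ with $\int_{H^2_0(\T)} \|v\|_{H^2_0(\T)}^2\,\dd(R^*_T\delta_{u_0})(v)$, I obtain a constant $C>0$ independent of $T$ such that
\[
\int_{H^2_0(\T)} \|v\|_{H^2_0(\T)}^2\,\dd(R^*_T\delta_{u_0})(v) \leq C, \qquad \forall\, T\geq 1 .
\]

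Next I would exploit the compactness of the embedding $H^2_0(\T)\hookrightarrow L^1_0(\T)$. On the one-dimensional torus, any closed ball $K_R := \{v : \|v\|_{H^2_0(\T)}\leq R\}$ is a compact subset of $L^1_0(\T)$: it is precompact by the Rellich--Kondrachov theorem and closed in $L^1_0(\T)$ because the $H^2_0(\T)$-norm is lower semi-continuous on $L^1_0(\T)$, as already used in Lemma~\ref{lem:sigmafields}. By Markov's inequality and the moment bound above,
\[
\tilde R^*_T\delta_{u_0}\big(L^1_0(\T)\setminus K_R\big) = R^*_T\delta_{u_0}\big(\{v : \|v\|_{H^2_0(\T)}>R\}\big) \leq \frac{C}{R^2},
\]
uniformly in $T\geq 1$. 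Hence, given $\varepsilon>0$, choosing $R$ with $C/R^2\leq\varepsilon$ shows that the compact set $K_R$ carries mass at least $1-\varepsilon$ under every $\tilde R^*_T\delta_{u_0}$, $T\geq 1$; this is exactly the tightness of $(\tilde R^*_T\delta_{u_0})_{T\geq 1}$ in $\mathcal P(L^1_0(\T))$. Since $L^1_0(\T)$ is a Polish space, Prokhorov's theorem then yields an increasing sequence $T^n\to+\infty$ and a limit $\tilde\mu\in\mathcal P(L^1_0(\T))$ with $\tilde R^*_{T^n}\delta_{u_0}\to\tilde\mu$ weakly, which is the claim.

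The one point requiring care---and the main obstacle---is the passage from the stopped estimate of Lemma~\ref{H1approx} to the global moment bound and its reformulation as an integral against $R^*_T\delta_{u_0}$. The functional $v\mapsto\|v\|_{H^2_0(\T)}^2$ is unbounded, hence not a legitimate test function for the weak definition of $R^*_T$ through the action of $P_t$ on $C_b(H^2_0(\T))$; I would therefore justify the Tonelli identity directly at the level of the nonnegative measurable map $(t,\omega)\mapsto\|u(t,\omega)\|_{H^2_0(\T)}^2$, integrating in $(t,\omega)$ and in the corresponding pushforward, rather than testing against a bounded continuous function.
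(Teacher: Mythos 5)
Your proof is correct and follows essentially the same route as the paper: a uniform-in-$T$ time-averaged moment bound obtained by letting $r\to\infty$ in a stopped estimate, the compact embedding of a Sobolev ball into $L^1_0(\T)$, Markov's inequality to get tightness, and Prokhorov's theorem. The only (harmless) difference is that you invoke the $H^2_0(\T)$ estimate of Lemma~\ref{H1approx}, whereas the paper gets by with the weaker $H^1_0(\T)$ bound from \eqref{intermapprox} with $p=2$ and the compact embedding $H^1_0(\T)\subset\subset L^1_0(\T)$.
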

\begin{proof}
Let $u_0 \in H^2_0(\T)$. From the inequality \eqref{intermapprox} with $p=2$, we can pass to the limit $r\to+\infty$ (which we recall implies that $T_r\to+\infty$ almost surely), and we get for all $T\geq0$,
\[ \E \left[ \int_0^T \| u(t) \|_{H^1_0(\T)}^2 \dd t \right] \leq \frac1{2\nu} \|u_0\|_{L^2_0(\T)}^2 +\frac{D_0T}{2\nu} . \]
Applying now the Markov inequality when $T\geq1$, we have for all $\varepsilon>0$,
\begin{equation}\label{tight}
\frac 1 T \int_0^T \P \left( \| u(t) \|_{H^1_0(\T)}^2 > \frac 1 \varepsilon \right) \dd t \leq \frac \varepsilon {2\nu} \left( \| u_0 \|_{L^2_0(\T)}^2 + D_0 \right) .
\end{equation}
Setting
\[ K_\varepsilon := \left\{v\in H^1_0(\T):\|v\|_{H^1_0(\T)}^2\leq\frac1\varepsilon \right\} , \]
we know from the compact embedding $H^1_0(\T)\subset\subset L^1_0(\T)$ that the set $K_\varepsilon$ is compact in $L^1_0(\T)$. Thus, rewriting \eqref{tight} as
\[ \tilde{R}^*_T\delta_{u_0}\left( L^1_0(\T)\setminus K_\varepsilon \right) \leq \frac\varepsilon{2\nu} \left( \| u_0 \|_{L^2_0(\T)}^2 + D_0 \right) , \]
we deduce that the family of measures $\{ \tilde{R}_T^*\delta_{u_0} : T\geq 1 \}$ is tight in the space $\mathcal P ( L^1_0(\T) )$. The result is then a consequence of Prokhorov's theorem \cite[Theorem 5.1]{Bil99}.
\end{proof}

\begin{lemma}\label{lem:existence}
Under the assumptions of Lemma \ref{tightness}, for all $p\geq1$, if $v$ is a random variable in $L^1_0(\T)$ distributed according to $\tilde\mu$, then
\[ \E \left[ \|v\|_{L^p_0(\T)}^p \right] < +\infty \qquad \text{and} \qquad \E\left[ \|v\|_{H^2_0(\T)}^2 \right] <+\infty . \]
Besides, the probability measure $\mu \in \mathcal P(H^2_0(\T))$ associated with $\tilde\mu$ by the second part of Lemma \ref{lem:sigmafields} is invariant for the semigroup $(P_t)_{t\geq0}$.
\end{lemma}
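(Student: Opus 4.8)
The plan is to proceed in three stages: first derive the moment bounds, which in particular show that $\tilde\mu$ is carried by $H^2_0(\T)$ and hence that $\mu$ is well defined via Lemma \ref{lem:sigmafields}(2); then prove invariance by a Krylov--Bogoliubov argument that must circumvent the mismatch between the topology in which tightness holds ($L^1_0(\T)$) and the topology of the semigroup ($H^2_0(\T)$).

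For the moments, I would start from \eqref{momentfinitetime} and from Lemma \ref{H1approx}, both written with the truncation $t\wedge T_r$, and let $r\to\infty$: since $T_r\to+\infty$ almost surely by Corollary \ref{cor:Tr}, monotone convergence yields, for every $t\ge0$,
\[ \E\left[\int_0^t \|u(s)\|_{L^p_0(\T)}^p\,\dd s\right] \le C_5^{(p)}\left(1+\|u_0\|_{L^p_0(\T)}^p\right)+C_6^{(p)}t, \]
\[ \nu\,\E\left[\int_0^t \|u(s)\|_{H^2_0(\T)}^2\,\dd s\right] \le \|u_0\|_{H^1_0(\T)}^2 + C_7\left(1+\|u_0\|_{L^{2p_A+2}_0(\T)}^{2p_A+2}\right)+C_8 t. \]
Evaluating at $t=T^n$, dividing by $T^n$ and using Tonelli, the left-hand sides equal $\int_{L^1_0(\T)}\|w\|_{L^p_0(\T)}^p\,\dd\tilde R^*_{T^n}\delta_{u_0}(w)$ and $\nu\int_{L^1_0(\T)}\|w\|_{H^2_0(\T)}^2\,\dd\tilde R^*_{T^n}\delta_{u_0}(w)$ (the norms being set to $+\infty$ off $L^p_0(\T)$, resp. $H^2_0(\T)$), because $u(s)\in H^2_0(\T)$ almost surely. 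These functionals are lower semicontinuous on $L^1_0(\T)$: for the $L^p$ norm by Fatou's lemma along $L^1$-convergent subsequences, and for the $H^2$ norm because it is the increasing supremum over $M$ of the maps $w\mapsto\sum_{m\le M}(-\lambda_m)^2\langle w,e_m\rangle_{L^2_0(\T)}^2$, each continuous on $L^1_0(\T)$. The Portmanteau theorem for nonnegative lower semicontinuous functions then gives $\E[\|v\|_{L^p_0(\T)}^p]\le C_6^{(p)}$ for $p\in2\N^*$ and $\E[\|v\|_{H^2_0(\T)}^2]\le C_8/\nu$; the case of general $p\ge1$ follows from the Remark after Lemma \ref{momentapprox} and the embedding $L^2_0(\T)\subset L^p_0(\T)$. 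In particular $\|v\|_{H^2_0(\T)}<+\infty$ $\tilde\mu$-almost surely, so $\tilde\mu$ gives full weight to $H^2_0(\T)$ and Lemma \ref{lem:sigmafields}(2) defines $\mu\in\mathcal P(H^2_0(\T))$.

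For invariance, fix $t\ge0$. The Krylov--Bogoliubov identity gives, for any $\psi\in C_b(H^2_0(\T))$,
\[ \left|\langle R^*_{T^n}\delta_{u_0},P_t\psi\rangle-\langle R^*_{T^n}\delta_{u_0},\psi\rangle\right| = \frac1{T^n}\left|\int_{T^n}^{T^n+t}P_s\psi(u_0)\,\dd s-\int_0^t P_s\psi(u_0)\,\dd s\right| \le \frac{2t\|\psi\|_\infty}{T^n} . \]
The obstruction is that $P_t\psi$ is only continuous for the $H^2_0(\T)$-topology, whereas the weak convergence of Lemma \ref{tightness} lives in $L^1_0(\T)$, so one cannot pass to the limit directly. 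This is resolved by the $L^1_0(\T)$-contraction of Proposition \ref{L1C}. I would test only against $\psi=\phi|_{H^2_0(\T)}$ with $\phi$ bounded and Lipschitz on $L^1_0(\T)$ (such functions are convergence-determining on the separable metric space $L^1_0(\T)$). Setting $Q_t\phi(u_0):=\E_{u_0}[\phi(u(t))]=P_t(\phi|_{H^2_0(\T)})(u_0)$, the contraction yields, for $u_0,v_0\in H^2_0(\T)$,
\[ |Q_t\phi(u_0)-Q_t\phi(v_0)|\le \mathrm{Lip}(\phi)\,\E\left[\|u(t)-v(t)\|_{L^1_0(\T)}\right]\le \mathrm{Lip}(\phi)\,\|u_0-v_0\|_{L^1_0(\T)} , \]
so $Q_t\phi$ is bounded and Lipschitz for the $L^1_0(\T)$-distance on the dense subset $H^2_0(\T)$ of $L^1_0(\T)$, and extends uniquely to $\overline{Q_t\phi}\in C_b(L^1_0(\T))$.

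To conclude, since all the measures are carried by $H^2_0(\T)$, the change of variables of Lemma \ref{lem:sigmafields} gives $\int_{L^1_0(\T)}\overline{Q_t\phi}\,\dd\tilde R^*_{T^n}\delta_{u_0}=\langle R^*_{T^n}\delta_{u_0},P_t(\phi|_{H^2_0(\T)})\rangle$ and $\int_{L^1_0(\T)}\overline{Q_t\phi}\,\dd\tilde\mu=\int_{H^2_0(\T)}\phi|_{H^2_0(\T)}\,\dd(P^*_t\mu)$. Computing $\lim_n\langle R^*_{T^n}\delta_{u_0},P_t(\phi|_{H^2_0(\T)})\rangle$ in two ways — once through $\overline{Q_t\phi}\in C_b(L^1_0(\T))$ and the weak convergence $\tilde R^*_{T^n}\delta_{u_0}\to\tilde\mu$, and once through the Krylov--Bogoliubov identity followed by the weak convergence applied to $\phi$ — I obtain
\[ \int_{L^1_0(\T)}\phi\,\dd\bigl(\widetilde{P^*_t\mu}\bigr)=\int_{L^1_0(\T)}\phi\,\dd\tilde\mu \]
for every bounded Lipschitz $\phi$, where $\widetilde{P^*_t\mu}$ is the image of $P^*_t\mu$ on $L^1_0(\T)$. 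As such functions determine Borel measures on $L^1_0(\T)$, this forces $\widetilde{P^*_t\mu}=\tilde\mu$, and the uniqueness in Lemma \ref{lem:sigmafields}(2) gives $P^*_t\mu=\mu$; since $t$ is arbitrary, $\mu$ is invariant. The main obstacle is precisely the topological mismatch highlighted above: tightness and weak convergence are only available in $L^1_0(\T)$ while the Feller semigroup lives on $H^2_0(\T)$, and the role of the $L^1_0(\T)$-contraction is exactly to turn $P_t$ into an operator sending bounded Lipschitz functions on $L^1_0(\T)$ into restrictions of such functions, which is what makes the passage to the limit legitimate.
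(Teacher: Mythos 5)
Your proposal is correct and follows essentially the same route as the paper: truncated moment estimates plus lower semicontinuity of the norms on $L^1_0(\T)$ and Portmanteau for the bounds, then a Krylov--Bogoliubov argument in which the $L^1_0(\T)$-contraction of Proposition \ref{L1C} is used to make $P_t$ act continuously with respect to the $L^1_0(\T)$-topology, so that the weak convergence of Lemma \ref{tightness} can be exploited. The only cosmetic difference is that you test against bounded Lipschitz functions and extend $Q_t\phi$ by density, whereas the paper works directly with arbitrary $\varphi\in C_b(L^1_0(\T))$ and shows $P_t\varphi$ is $L^1_0(\T)$-continuous on $H^2_0(\T)$; both are valid.
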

\begin{proof}
We start to show that the measure $\tilde{\mu} \in \mathcal P (L^1_0(\T))$ gives full weight to $H^2_0(\T)$. Thanks to Lemma \ref{H1approx}, since $T_r \underset{r\to\infty}\longrightarrow+\infty$ almost surely, we have:
\begin{equation}\label{H2bound}
\forall T>0 , \quad \frac1T \int_0^T \E \left[ \| u(s) \|_{H^2_0(\T)}^2 \right] \dd s \leq \frac1{T\nu} \left( \| u_0\|_{H^1_0(\T)}^2 + C_7 \left( 1+\|u_0\|_{L^{2p_A+2}_0(\T)}^{2p_A+2} \right) \right) + \frac{C_8}{\nu} .
\end{equation}
Let $(v_n)_{n\geq1}$ be a sequence of $H^2_0(\T)$-valued random variables such that $v_n \sim R_{T_n}^*\delta_{u_0}$ and $v_n$ converges in distribution in $L^1_0(\T)$ towards a random variable $v \sim \tilde{\mu}$. From \eqref{H2bound} and the definition of $(R_T)_{T\geq0}$, we have
\[ \limsup_{n\to\infty} \E \left[ \| v_n \|_{H^2_0(\T)}^2 \right] = \limsup_{n\to\infty} \frac1{T_n} \int_0^{T_n} \E_{u_0} \left[ \| u(s) \|_{H^2_0(\T)}^2 \right] \dd s \leq \frac{C_8}{\nu} . \]
Now, since $\|\cdot\|_{H^2_0(\T)}^2$ is lower semi-continuous on $L^1_0(\T)$, we get from Portemanteau's theorem:
\[ \E \left[ \| v \|_{H^2_0(\T)}^2 \right] \leq \liminf_{n\to\infty} \E \left[ \| v_n \|_{H^2_0(\T)}^2 \right] \leq \frac{C_8}\nu . \]
In particular, $v\in H^2_0(\T)$ almost surely, and thus $\tilde{\mu}$ gives full weight to $H^2_0(\T)$.

We now show that for any $p\geq1$, $\E [ \|v\|_{L^p_0(\T)}^p]<+\infty$. Let $p\geq1$. From Lemma \ref{momentapprox}, we have for all $T>0$,
\[ \frac1T \int_0^T \E_{u_0} \left[ \| u(s) \|_{L^p_0(\T)}^p \right] \dd s \leq \frac{C_5^{(p)}}T \left( 1+\|u_0\|_{L^p_0(\T)}^p \right) + C_6^{(p)} . \]
Once again, we use Portemanteau's theorem and the lower semi-continuity, this time of $\|\cdot\|_{L^p_0(\T)}^p$, on $L^1_0(\T)$:
\[ \E \left[ \| v \|_{L^p_0(\T)}^p \right] \leq \liminf_{n\to\infty} \E \left[ \| v_n \|_{L^p_0(\T)}^p \right] = \liminf_{n\to\infty} \frac1{T_n} \int_0^{T_n} \E_{u_0} \left[ \|u(s) \|_{L^p_0(\T)}^p \right] \dd s \leq C_6^{(p)} , \]
and the wanted result follows.

To prove the invariance of the measure $\mu$ with respect to $(P_t)_{t\geq0}$, we wish to apply the Krylov-Bogoliubov theorem \cite[Theorem 3.1.1]{DZ96}. However, $(P_t)_{t\geq0}$ is a Feller semigroup on the space $H^2_0(\T)$ (Corollary \ref{markov}) whereas our tightness result (Lemma \ref{tightness}) holds in $\mathcal P(L^1_0(\T))$. To overcome this inconvenience, we use Lemma~\ref{lem:sigmafields} and we place ourselves at the level of the embedded probability measures in $\mathcal P(L^1_0(\T))$, where we can adapt, thanks to Proposition~\ref{L1C}, the proof of \cite[Theorem 3.1.1]{DZ96}.

Let $\mu \in \mathcal P(H^2_0(\T))$ be associated with $\tilde\mu$ by the second part of Lemma~\ref{lem:sigmafields}, and let $\varphi \in C_b(L^1_0(\T))$. In particular, the restriction $\varphi_{|H^2_0(\T)}$ is bounded and continuous on $H^2_0(\T)$ and we can write
\begin{equation}\label{ex1}
\int_{H^2_0(\T)} \varphi \dd P^*_t \mu = \int_{H^2_0(\T)} P_t\varphi\dd\mu .
\end{equation}
It follows from the $L^1_0(\T)$-contraction property that the map $P_t\varphi : H^2_0(\T)\to\R$ is continuous with respect to the $L^1_0(\T)$-norm. To prove this fact, let $v_0 \in H^2_0(\T)$ and let $(v^{(j)}_0)_{j\geq1}$ be a sequence of $H^2_0(\T)$ such that $\|v^{(j)}_0-v_0\|_{L^1_0(\T)}\to0$, $j\to+\infty$. Let $(v(t))_{t\geq0}$ and $(v^{(j)}(t))_{t\geq0}$, $j\geq1$, be the strong solutions of~\eqref{SSCL} respectively with initial conditions $v_0$ and $v^{(j)}_0$, $j\geq1$. From Proposition~\ref{L1C}, we get almost surely and for all $t\geq0$,
\[ \lim_{j\to\infty} \left\| v^{(j)}(t) - v(t) \right\|_{L^1_0(\T)} = 0 . \]
Since $\varphi$ is bounded and continuous with respect to the $L^1_0(\T)$-norm, we have
\[ \lim_{j\to\infty} \left| P_t\varphi \left( v^{(j)}_0 \right) -P_t\varphi(v_0) \right| \leq \lim_{j\to\infty} \E \left[ \left| \varphi \left( v^{(j)}(t) \right) - \varphi(v(t)) \right| \right] = 0, \]
so that $P_t\varphi$ is continuous with respect to the $L^1_0(\T)$-norm.

As a consequence, from Lemma~\ref{tightness}, we have for all $t\geq0$
\begin{align*}
 \int_{H^2_0(\T)} P_t\varphi\dd\mu &= \int_{L^1_0(\T)} P_t\varphi \dd\tilde\mu \\
 &= \lim_{n\to\infty} \int_{L^1_0(\T)} P_t\varphi \dd\tilde R^*_{T^n}\delta_{u_0} \\
 &= \lim_{n\to\infty} \int_{H^2_0(\T)} P_t\varphi \dd R^*_{T^n}\delta_{u_0} \\
 &= \lim_{n\to\infty} \frac1{T^n} \int_0^{T^n} \int_{H^2_0(\T)} \varphi \dd P^*_{s+t} \delta_{u_0} \dd s \\
 &= \lim_{n\to\infty} \frac1{T^n} \int_t^{T^n+t} \int_{H^2_0(\T)} \varphi \dd P^*_s \delta_{u_0} \dd s \\
 &= \lim_{n\to\infty} \left( \frac1{T^n}\int_0^{T^n} \int_{H^2_0(\T)} \varphi \dd P^*_s \delta_{u_0} \dd s + \frac1{T^n} \int_{T^n}^{T^n+t} \int_{H^2_0(\T)} \varphi \dd P^*_s \delta_{u_0} \dd s - \frac1{T^n} \int_0^t \int_{H^2_0(\T)} \varphi \dd P^*_s \delta_{u_0} \dd s \right) \\
 &= \lim_{n\to\infty} \int_{H^2_0(\T)} \varphi \dd R^*_{T^n} \delta_{u_0} \\
 &= \lim_{n\to\infty} \int_{L^1_0(\T)} \varphi \dd\tilde R^*_{T^n}\delta_{u_0} = \int_{L^1_0(\T)} \varphi \dd\tilde\mu .
\end{align*}
For any $t\geq0$, $P^*_t\mu$ gives full weight to $H^2_0(\T)$ and therefore, following the first part of Lemma~\ref{lem:sigmafields}, we can define the associated Borel probability measure on $L^1_0(\T)$ by $\tilde P^*_t\mu=P^*_t\mu(\cdot\cap H^2_0(\T))$. From Equation~\eqref{ex1} and the above sequence of computations, it follows that for all $t\geq0$,
\[ \int_{L^1_0(\T)} \varphi \dd\tilde P^*_t\mu = \int_{L^1_0(\T)} \varphi \dd\tilde\mu , \]
Given that $\varphi$ has been chosen arbitrarily in $C_b(L^1_0(\T))$, this last equality says that $\tilde P^*_t\mu = \tilde\mu$. The second part of Lemma~\ref{lem:sigmafields} now ensures that $P^*_t\mu=\mu$.
\end{proof}

\subsection{Uniqueness}

The proof of the uniqueness part of Theorem \ref{thm:im} follows the ideas of the "small-noise" coupling argument from Dirr and Souganidis \cite{DS05}. On one hand, due to the dissipative nature of the drift, two solutions of~\eqref{SSCL} perturbed by the same noise and starting from different initial conditions are driven to balls of $L^2_0(\T)$ with small radius whenever this noise is small over sufficiently long time intervals. On the other hand, the $L^1_0(\T)$-contraction property ensures that when these two solutions get close to one another they stay close forever. Hence, each time the noise gets small enough, the two solutions get closer and closer and eventually, they show the same asymptotical behaviour. This idea allows to show that the law of two solutions have the same limit as the time goes to infinity. Therefore, starting from two invariant measures leads to the equality of these measures. The same kind of argument was used in \cite{DV15} for the invariant measure of kinetic solutions of inviscid scalar conservation laws and in \cite{Deb13} for the stochastic Navier-Stokes equations.

Let $(u(t))_{t\geq0}$ and $(v(t))_{t\geq0}$ be two solutions of \eqref{SSCL} driven by the same $Q$-Wiener process $(W^Q(t))_{t\geq0}$. For all $R>0$, we define the stopping time:
\[ \tau_R := \inf \left\{ t \geq 0 : \| u(t) \|_{H^1_0(\T)}^2 + \| v(t) \|_{H^1_0(\T)}^2 \leq R \right\} . \]

\begin{lemma}\label{entranceLB}
Under Assumptions \ref{ass:A} and \ref{ass:g}, there exists $R>0$ such that for any $u_0$ and $v_0$ in $H^2_0(\T)$, the stopping time $\tau_R$ is finite almost surely.
\end{lemma}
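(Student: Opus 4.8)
The plan is to exploit the dissipativity of the viscous term at the level of the $L^2_0(\T)$-energy, whose instantaneous dissipation rate is precisely the squared $H^1_0(\T)$-norm. Working directly with the \emph{sum} of the two energies will automatically deliver the simultaneous smallness of $\|u(t)\|_{H^1_0(\T)}^2$ and $\|v(t)\|_{H^1_0(\T)}^2$ that is encoded in $\tau_R$, so I never need to synchronise the two trajectories by hand.

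First I would write the pathwise It\^o identity for $F_2(w) = \|w\|_{L^2_0(\T)}^2$, which (unlike the case $p>2$ handled by truncation in Lemma~\ref{momentapprox}) needs no regularisation since $F_2$ is smooth on $L^2_0(\T)$. Since $u$ and $v$ are driven by the \emph{same} Wiener process $W^Q$, I set $\Phi(t) := \|u(t)\|_{L^2_0(\T)}^2 + \|v(t)\|_{L^2_0(\T)}^2$ and $I(t) := \int_0^t(\|u(s)\|_{H^1_0(\T)}^2 + \|v(s)\|_{H^1_0(\T)}^2)\dd s$. The flux contributions vanish exactly as in~\eqref{fluxdisappears}, the viscous term yields the dissipation $-2\nu I(t)$, and the It\^o correction contributes $2D_0' t$ with $D_0' := \sum_{k\ge1}\|g_k\|_{L^2_0(\T)}^2 \le D_0$, giving
\[ \Phi(t) = \Phi(0) - 2\nu I(t) + 2D_0' t + 2N_t, \qquad N_t := \int_0^t \langle u(s)+v(s), \dd W^Q(s)\rangle_{L^2_0(\T)} . \]
The global (unstopped) validity of this identity is obtained by applying It\^o on $[0,t\wedge T_r]$ and letting $r\to\infty$ via Corollary~\ref{cor:Tr}. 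Using $\Phi(t)\ge0$ yields the a priori bound $2\nu I(t) \le \Phi(0) + 2D_0' t + 2N_t$, while Cauchy--Schwarz together with the Poincar\'e inequality~\eqref{strongerpoincare} controls the quadratic variation by $\langle N\rangle_t \le 2D_0'\int_0^t\Phi(s)\dd s \le 2D_0' I(t)$.

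The crux — and the step I expect to be the main obstacle — is to show that $N_t$ is negligible in front of $t$, \emph{despite} its quadratic variation being bounded only by the very quantity $I(t)$ we are trying to estimate. I would resolve this self-referential bound through the strong law of large numbers for continuous local martingales: on $\{\langle N\rangle_\infty < +\infty\}$ the martingale $N_t$ converges, whence $2\nu I(t)\le \Phi(0)+2D_0't+2N_t = O(t)$; on $\{\langle N\rangle_\infty = +\infty\}$ one has $N_t/\langle N\rangle_t\to0$ almost surely, so that $N_t = o(\langle N\rangle_t) = o(I(t))$, and re-injecting this into the a priori bound again forces $I(t)=O(t)$. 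In both cases $I(t)=O(t)$ almost surely, hence $N_t = o(t)$, and feeding this back gives
\[ \limsup_{t\to\infty}\frac{1}{t}\int_0^t\left(\|u(s)\|_{H^1_0(\T)}^2 + \|v(s)\|_{H^1_0(\T)}^2\right)\dd s \le \frac{D_0'}{\nu} \qquad \text{almost surely.} \]

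Finally I would conclude by contradiction. Fix any $R > D_0'/\nu$. On the event $\{\tau_R = +\infty\}$ one has $\|u(t)\|_{H^1_0(\T)}^2 + \|v(t)\|_{H^1_0(\T)}^2 > R$ for every $t\ge0$, so $I(t)/t > R$ for all $t>0$, which contradicts the displayed $\limsup$ bound precisely because $R>D_0'/\nu$. Hence $\P(\tau_R=+\infty)=0$, i.e. $\tau_R<+\infty$ almost surely; since $D_0'\le D_0$, any $R>D_0/\nu$ works, and this value is manifestly independent of $u_0$ and $v_0$, which enter only through $\Phi(0)$ and are washed out upon dividing by $t\to\infty$. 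The only genuinely delicate points are the passage to the global It\^o identity and the precise invocation of the martingale law of large numbers; everything else is a direct energy computation.
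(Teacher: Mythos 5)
Your proof is correct, but it takes a genuinely different route from the paper's. Both arguments start from the same $p=2$ energy identity (the It\^o formula for $\|\cdot\|_{L^2_0(\T)}^2$ applied to $u$ and $v$, with the flux term vanishing as in \eqref{fluxdisappears} and the viscous term producing the dissipation $-2\nu\int_0^t\|\cdot\|_{H^1_0(\T)}^2\dd s$), but they diverge in how the martingale is handled. The paper simply takes expectations on $[0,t\wedge\tau_R]$ — which kills the stochastic integral — and then uses the defining property of $\tau_R$, namely that $\|u(s)\|_{H^1_0(\T)}^2+\|v(s)\|_{H^1_0(\T)}^2> R$ on $[0,\tau_R)$, to lower-bound the dissipation integral by $R\,\E[t\wedge\tau_R]$; for $R>D_0/\nu$ this yields the explicit moment bound $\E[\tau_R]\leq(\|u_0\|_{L^2_0(\T)}^2+\|v_0\|_{L^2_0(\T)}^2)/(2(\nu R-D_0))$, which is strictly stronger than almost sure finiteness and comes with essentially no machinery. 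You instead work pathwise, resolving the self-referential bound $\langle N\rangle_t\lesssim I(t)$ via the strong law of large numbers for continuous local martingales to obtain the almost sure time-averaged bound $\limsup_{t\to\infty}I(t)/t\leq D_0'/\nu$, and then conclude by contradiction on $\{\tau_R=+\infty\}$. This is a valid and self-contained argument (the case split on $\{\langle N\rangle_\infty<+\infty\}$ versus $\{\langle N\rangle_\infty=+\infty\}$ is handled correctly, and the passage from the stopped to the global identity is unproblematic since $T_r\to+\infty$ almost surely by Corollary \ref{cor:Tr}); it even yields the marginally sharper threshold $R>D_0'/\nu$ with $D_0'=\sum_k\|g_k\|_{L^2_0(\T)}^2\leq D_0$, and the ergodic-type enstrophy bound is of independent interest. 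What you give up is the quantitative control $\E[\tau_R]<+\infty$ and the brevity of the expectation argument; what you gain is an almost sure statement that does not pass through the stopping time at all. For the purposes of Lemma \ref{lem:uniqueness}, only almost sure finiteness is used, so either conclusion suffices.
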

\begin{proof}
We can use here, from the statement of Lemma \ref{momentapprox}, the inequality \eqref{intermapprox} with $p=2$. In this case, we get
\[ 2\nu \E \left[ \int_0^{t\wedge\tau_R} \left( \|u(s)\|_{H^1_0(\T)}^2 +\|v(s)\|_{H^1_0(\T)}^2 \right)\dd s\right]\leq\|u_0\|_{L^2_0(\T)}^2+\|v_0\|_{L^2_0(\T)}^2+2D_0\E[t\wedge\tau_R], \]
from which we deduce, by definition of the stopping time $\tau_R$, that
\[ 2\nu R\E[t\wedge\tau_R]\leq\|u_0\|_{L^2_0(\T)}^2+\|v_0\|_{L^2_0(\T)}^2+2D_0\E[t\wedge\tau_R] . \]
Taking $R>D_0/\nu$ yields
\[ \E [ \tau_R ] = \lim_{t\to\infty} \E [ \tau_R \wedge t ] \leq \frac{\| u_0 \|_{L^2_0(\T)}^2 + \| v_0 \|_{L^2_0(\T)}^2}{2(\nu R-D_0)} < +\infty , \]
from which we derive the wanted result.
\end{proof}

The following result asserts that when the coupled processes $\left(u(t)\right)_{t\geq0}$ and $\left(v(t)\right)_{t\geq0}$ start from deterministic initial conditions inside some ball of $L^2_0(\T)$, then they both attain in finite time any neighbourhood of $0$ with positive probability:
\begin{lemma}\label{entrance}
Under Assumptions \ref{ass:A} and \ref{ass:g}, for any $M>0$ and any $\varepsilon>0$, there exist a time $t_{\varepsilon,M}>0$ and a value $p_{\varepsilon,M}\in(0,1)$ such that for all $u_0,v_0\in H^2_0(\T)$ satisfying $\|u_0\|_{H^1_0(\T)}^2+\|v_0\|_{H^1_0(\T)}^2\leq M$,
\[ \P \left( \left\| u(t_{\varepsilon,M}) \right\|_{L^2_0(\T)}^2 + \left\| v(t_{\varepsilon,M}) \right\|_{L^2_0(\T)}^2 \leq \varepsilon \right) \geq p_{\varepsilon,M} . \]
\end{lemma}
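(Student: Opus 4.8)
The plan is to exploit the dissipative structure of the viscous drift: on the event where the realised noise $W^Q$ happens to stay small over the whole interval $[0,t]$, the solution $u$ shadows the solution $\bar u$ of the \emph{deterministic} equation
\[
\partial_t\bar u = -\partial_x A(\bar u) + \nu\partial_{xx}\bar u, \qquad \bar u(0)=u_0,
\]
whose $L^2_0(\T)$-norm decays exponentially to $0$; since that event has positive probability, the claim follows. First I would record two deterministic facts. Testing the equation against $\bar u$ makes the flux term vanish, exactly as in \eqref{fluxdisappears} with $p=2$, so $\tfrac{\dd}{\dd t}\|\bar u\|_{L^2_0(\T)}^2 = -2\nu\|\bar u\|_{H^1_0(\T)}^2$, and the Poincar\'e inequality $\|\bar u\|_{H^1_0(\T)}^2\geq(2\pi)^2\|\bar u\|_{L^2_0(\T)}^2$ yields $\|\bar u(s)\|_{L^2_0(\T)}^2\leq\e^{-2\nu(2\pi)^2 s}\|u_0\|_{L^2_0(\T)}^2$. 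Moreover, the energy estimates of Lemmas \ref{momentapprox} and \ref{H1approx}, applied pathwise in the noiseless case $W^Q\equiv0$ (where the expectations are superfluous), together with \eqref{strongerpoincare} and $\|u_0\|_{H^1_0(\T)}\leq\sqrt M$, give a bound $\sup_{s\in[0,T]}\|\bar u(s)\|_{H^1_0(\T)}\leq\Lambda$ on any finite interval, with $\Lambda=\Lambda(M,T)$ depending only on $M,T,\nu,C_1,p_A$. The same holds for the deterministic solution $\bar v$ started from $v_0$; both $\bar u,\bar v$ are the strong solutions furnished by Theorem \ref{wellposedness} in the degenerate case $g_k\equiv0$.

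Next I would introduce the difference $d(t):=u(t)-W^Q(t)-\bar u(t)$, which satisfies $d(0)=0$ and the pathwise evolution
\[
\partial_t d = \nu\partial_{xx}d + \nu\partial_{xx}W^Q - \partial_x\bigl(A(u)-A(\bar u)\bigr).
\]
The decisive gain is that the nonlinearity now enters through the \emph{difference} $A(u)-A(\bar u)$, hence through the local Lipschitz constant of $A$ rather than through its superlinear growth. Performing an $H^1_0(\T)$ energy estimate and integrating by parts, I would control the flux term by writing $\partial_x(A(u)-A(\bar u))=A'(u)\partial_x(u-\bar u)+(A'(u)-A'(\bar u))\partial_x\bar u$ and using $u-\bar u=d+W^Q$, the bound $\|A'(u)\|_{L^\infty_0(\T)}\leq C_1(1+\|u\|_{L^\infty_0(\T)}^{p_A})$ from \eqref{PGA}, the local Lipschitz continuity of $A'$ (from $A\in C^2$ with $A''$ locally Lipschitz), and \eqref{strongerpoincare}. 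This makes $\|\partial_x(A(u)-A(\bar u))\|_{L^2_0(\T)}$ \emph{linear} in $\|d\|_{H^1_0(\T)}$ with coefficients governed by the $L^\infty_0(\T)$-bounds of $u$ and $\bar u$; after a Young inequality absorbing $\|\partial_{xx}d\|_{L^2_0(\T)}^2$ into the dissipation and bounding the $\nu\partial_{xx}W^Q$ term by $\sup_{[0,t]}\|W^Q\|_{H^2_0(\T)}^2$, I obtain a linear Gr\"onwall inequality for $\|d\|_{H^1_0(\T)}^2$. Since $d(0)=0$, this gives $\sup_{s\in[0,t]}\|d(s)\|_{H^1_0(\T)}^2\leq C(t,M)\,\sup_{s\in[0,t]}\|W^Q(s)\|_{H^2_0(\T)}^2$.

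The one genuine obstacle is that the Lipschitz constants above depend, via \eqref{strongerpoincare}, on $\|u\|_{L^\infty_0(\T)}$ and hence on $\|d\|_{H^1_0(\T)}$ itself, so the estimate is a priori circular. I would close it by a bootstrap on the good event $\mathcal G_\delta:=\{\sup_{s\in[0,t]}\|W^Q(s)\|_{H^2_0(\T)}\leq\delta\}$: setting $\sigma:=\inf\{s:\|d(s)\|_{H^1_0(\T)}\geq1\}$, on $[0,\sigma]$ the $L^\infty_0(\T)$-norms of $u$ and $\bar u$ are bounded by $1+\Lambda+\delta$, so the Lipschitz constants, and thus $C(t,M)$, are fixed, and the estimate reads $\|d(s)\|_{H^1_0(\T)}^2\leq C(t,M)\delta^2$ for $s\in[0,t\wedge\sigma]$. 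Choosing $\delta$ small enough that $C(t,M)\delta^2<1$ and invoking continuity together with $d(0)=0$ forces $\sigma>t$, so the bound holds on all of $[0,t]$. The identical argument applies to $v$ with $d_v:=v-W^Q-\bar v$ on the same event $\mathcal G_\delta$, since $u$ and $v$ are driven by the same $W^Q$ and both initial data lie in the $\sqrt M$-ball.

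To conclude, on $\mathcal G_\delta$ I would write
\[
\|u(t)\|_{L^2_0(\T)}\leq\|d(t)\|_{L^2_0(\T)}+\|\bar u(t)\|_{L^2_0(\T)}+\|W^Q(t)\|_{L^2_0(\T)}\leq\sqrt{C(t,M)}\,\delta+\e^{-\nu(2\pi)^2 t}\sqrt M+\delta,
\]
and similarly for $v$. Fixing first $t=t_{\varepsilon,M}$ large enough to make the deterministic terms at most $\tfrac13\sqrt{\varepsilon/2}$, then $\delta$ small enough to make the remaining terms at most $\tfrac23\sqrt{\varepsilon/2}$, yields $\|u(t)\|_{L^2_0(\T)}^2+\|v(t)\|_{L^2_0(\T)}^2\leq\varepsilon$ on $\mathcal G_\delta$, with $t_{\varepsilon,M},\delta$ depending only on $\varepsilon,M$. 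It then remains to set $p_{\varepsilon,M}:=\P(\mathcal G_\delta)$ and to check positivity. This is a small-ball estimate: $W^Q$ is a centred, $H^2_0(\T)$-valued Gaussian process with almost surely continuous paths (cf.\ \eqref{covariance}), so the zero path belongs to the support of its law in $C([0,t_{\varepsilon,M}],H^2_0(\T))$, whence every neighbourhood of it, in particular $\mathcal G_\delta$, has positive probability. I expect this positivity statement and the rigorous justification of the energy identities for the $H^2_0(\T)$-valued strong solutions to be the only remaining technical points; the conceptual difficulty, namely the control of the nonlinear flux, is precisely what the difference formulation and the bootstrap are designed to resolve.
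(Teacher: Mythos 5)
Your proof is correct and follows the same overall strategy as the paper (compare $u,v$ with their noiseless counterparts $\bar u,\bar v$, whose $L^2_0(\T)$-norms decay exponentially by the energy identity and the Poincar\'e inequality; show that on a positive-probability small-noise event the true solutions shadow them; conclude), but the two technical ingredients are implemented by genuinely different means. For the perturbation estimate, the paper stays in the mild formulation: it bounds $\| u(t)-\bar u(t)\|_{H^1_0(\T)}$ directly by $\int_0^t C_4 L_M (t-s)^{-1/2}\| u(s)-\bar u(s)\|_{H^1_0(\T)}\,\dd s + \|w(t)\|_{H^1_0(\T)}$, where $w$ is the stochastic convolution, and closes via the iterated singular Gr\"onwall argument of Lemma \ref{cd}, keeping the local Lipschitz constants under control with a stopping time $\tilde\tau_M$ (the exact analogue of your bootstrap time $\sigma$). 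You instead shift by the noise, set $d=u-W^Q-\bar u$, and run a pathwise differential $H^1_0(\T)$ energy estimate; this buys a more elementary ODE-type Gr\"onwall inequality but costs you (i) the need to justify the chain rule for $t\mapsto\|\partial_x d(t)\|_{L^2_0(\T)}^2$ for a process that is only $C([0,T],H^2_0(\T))\cap C^1([0,T],L^2_0(\T))$ (which you rightly flag, and which is standard), and (ii) a good event measured in the \emph{$H^2_0(\T)$}-norm of $W^Q$, because of the term $\nu\partial_{xx}W^Q$, whereas the paper only needs the $H^1_0(\T)$-norm of $w$ to be small. For the positivity of the small-noise event, the paper invokes a support theorem for the stochastic heat equation (Nakayama) applied to $w$, while you use the support of the $Q$-Wiener process itself in $C([0,t_{\varepsilon,M}],H^2_0(\T))$; your version is the more elementary of the two (it follows from the Cameron--Martin description of the support, or from truncating $\sum_k g_k W^k$ to finitely many modes and using finite-dimensional small-ball estimates), and it is valid under Assumption \ref{ass:g}. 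Both routes deliver constants $t_{\varepsilon,M}$, $\delta$ and hence $p_{\varepsilon,M}$ depending only on $\varepsilon$ and $M$, which is what the uniform statement of the lemma requires.
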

\begin{proof}
 Let $u_0$, $v_0 \in H^2_0(\T)$ be such that $\|u_0\|_{H^1_0(\T)}+\|v_0\|_{H^1_0(\T)}\leq M$, and let us define
 \[ t_{\varepsilon,M} = -\frac 1{2\nu} \log \left( \frac\varepsilon{4M} \right) . \]
 To prove the lemma, we are going to compare the trajectories of $\left(u(t)\right)_{t\geq0}$ and $\left(v(t)\right)_{t\geq0}$ with the trajectories of their noiseless counterparts $\left(\bar u(t)\right)_{t\geq0}$ and $\left(\bar v(t)\right)_{t\geq0}$, defined by
 \begin{equation*}
 \begin{cases}
 \partial_t \bar u (t) = - \partial_x A \left( \bar u(t) \right) + \nu \partial_{xx} \bar u(t) \\
 \bar u(0) = u_0
 \end{cases}
 \qquad
 \begin{cases}
\partial_t \bar v (t) = - \partial_x A \left( \bar v(t) \right) + \nu \partial_{xx} \bar v(t) \\
 \bar v(0) = v_0 .
 \end{cases}
 \end{equation*}
 
 Recall that the viscosity yields energy dissipation:
 \[ \frac \dd {\dd t} \left( \| \bar u(t) \|_{L^2_0(\T)}^2 + \| \bar v(t) \|_{L^2_0(\T)}^2 \right) = - 2\nu \left( \| \bar u(t) \|_{H^1_0(\T)}^2 + \| \bar v(t) \|_{H^1_0(\T)}^2 \right) . \]
 Applying~\eqref{strongerpoincare} on the right-hand side, we get
 \[ \frac \dd {\dd t} \left( \| \bar u(t) \|_{L^2_0(\T)}^2 + \| \bar v(t) \|_{L^2_0(\T)}^2 \right) \leq - 2\nu \left( \| \bar u(t) \|_{L^2_0(\T)}^2 + \| \bar v(t) \|_{L^2_0(\T)}^2 \right) , \]
 and we can now apply Gr\"onwall's lemma:
 \[ \| \bar u(t) \|_{L^2_0(\T)}^2 + \| \bar v(t) \|_{L^2_0(\T)}^2 \leq \left( \| u_0 \|_{L^2_0(\T)}^2 + \| v_0 \|_{L^2_0(\T)}^2 \right) \e^{-2\nu t} \leq M \e^{-2\nu t} . \]
 With our choice of $t_{\varepsilon,M}$, the above inequality means that as soon as $t\geq t_{\varepsilon,M}$, we have $\| \bar u(t) \|_{L^2_0(\T)}^2 + \| \bar v(t) \|_{L^2_0(\T)}^2 \leq \varepsilon/4$.
 
 Furthermore, it is a consequence of Lemma \ref{H1approx} that $(\bar u(t))_{t\geq0}$ satisfies
 \[ \left\| \bar u(t) \right\|_{H^1_0(\T)}^2 \leq \left\| u_0 \right\|_{H^1_0(\T)}^2 + C_7 \left(1+\left\| u_0\right\|_{L^{2p_A+2}_0(\T)}^{2p_A+2}\right) , \quad t\geq0. \]
 Indeed, when all the noise coefficients $g_k$ are equal to zero, the constant $C_8$ in the statement of Lemma \ref{H1approx} can also be taken equal to zero. Since the same inequality also applies to $(\bar v(t))_{t\geq0}$, we have
 \[ \left\| \bar u(t) \right\|_{H^1_0(\T)}^2 + \left\| \bar v(t) \right\|_{H^1_0(\T)}^2 \leq M+2C_7\left(1+M^{p_A+1}\right) =: C_9^{(M)} . \]

 We focus now on the trajectories of the random processes $\left(u(t)\right)_{t\geq0}$ and $\left(v(t)\right)_{t\geq0}$. We introduce the stopping time
 \[ \tilde\tau_M := \inf \left\{ t\geq0 : \| u(t) \|_{H^1_0(\T)} \vee \| v(t) \|_{H^1_0(\T)} \geq \frac12 + \sqrt{C_9^{(M)}} \right\} . \]
 Following Proposition~\ref{prop:mild-strong}, we may use the expressions of $(u(t))_{t\geq0}$ and $(\bar u(t))_{t\geq0}$ in the mild sense. From these mild formulations, we write
 \begin{equation}\label{mildineq}
 \left\| u(t) -\bar u(t) \right\|_{H^1_0(\T)} \leq \int_0^t \left\| S_{t-s} \partial_x \left( A(u(s)) - A(\bar u(s)) \right) \right\|_{H^1_0(\T)} \dd s + \| w(t) \|_{H^1_0(\T)} ,
 \end{equation}
where $(w(t))_{t\geq0}$ is the stochastic convolution associated with the $Q$-Wiener process $(W^Q(t))_{t\geq0}$. According to Proposition \ref{localboundedness}, we call $L_M$ a local Lipschitz constant of the map $z\in H^1_0(\T) \mapsto \partial_x A(z) \in L^2_0(\T)$ over the ball $\{ z \in H^1_0(\T) : \|z\|_{H^1_0(\T)}^2\leq\frac12+\sqrt{C_9^{(M)}} \}$ , and we place ourselves in the event
 \[ \left\{ \sup_{t\in[0,t_{\varepsilon,M}]} \| w(t) \|_{H^1_0(\T)} \leq \delta_{\varepsilon,M} \right\} , \qquad \text{where} \quad \delta_{\varepsilon,M} := \frac{\sqrt\varepsilon}{2\sqrt2} \frac1{1+2\sqrt{t_{\varepsilon,M}}C_4L_M} \e^{-C_4^2L_M^2t_{\varepsilon,M}} , \]
 where $C_4$ has been defined at Proposition \ref{localboundedness}. Taking $t\leq \tilde\tau_M \wedge t_{\varepsilon,M}$, applying the second part of Proposition~\ref{prop:heatkernel} and Proposition~\ref{localboundedness} to \eqref{mildineq}, we get
 \begin{align*}
 \left\| u(t) - \bar u(t) \right\|_{H^1_0(\T)} &\leq \int_0^t \frac{C_4}{\sqrt{t-s}} \left\| \partial_x(A(u(s)) - A(\bar u(s))) \right\|_{L^2_0(\T)} \dd s + \delta_{\varepsilon,M} \\
 &\leq \int_0^t \frac{C_4L_M}{\sqrt{t-s}} \| u(s)-\bar u(s) \|_{H^1_0(\T)} \dd s + \delta_{\varepsilon,M} .
 \end{align*}
Iterating this inequality and using the same arguments as in the proof of Lemma \ref{cd}, we get for all $t\leq t_{\varepsilon,M}\wedge\tilde\tau_M$,
 \[ \| u(t)-\bar u(t) \|_{H^1_0(\T)} \leq \delta_{\varepsilon,M} \left( 1+2\sqrt{t_{\varepsilon,M}\wedge\tilde\tau_M}C_4L_M\right) + C_4^2L_M^2\pi\int_0^t \| u(s)-\bar u(s) \|_{H^1_0(\T)} \dd s . \]
 Using now Gr\"onwall's lemma, we deduce
 \[ \| u(t) - \bar u(t) \|_{H^1_0(\T)} \leq \delta_{\varepsilon,M} \left( 1+2\sqrt{t_{\varepsilon,M}\wedge\tilde\tau_M}C_4L_M\right) \e^{C_4^2L_M^2\pi t} \leq \frac{\sqrt \varepsilon}{2\sqrt2} . \]
 Since the same arguments apply for the processes $\left(v(t)\right)_{t\geq0}$ and $\left(\bar v(t)\right)_{t\geq0}$, and given Equation~\eqref{strongerpoincare}, we have shown that for all $t\leq \tilde\tau_M \wedge t_{\varepsilon,M}$,
 \begin{align*}
 \| u(t) \|_{L^2_0(\T)}^2 + \| v(t) \|_{L^2_0(\T)}^2 &\leq 2 \left( \| \bar u(t) \|_{L^2_0(\T)}^2 + \| \bar v(t) \|_{L^2_0(\T)}^2 \right) + 2 \left( \| u(t)-\bar u(t) \|_{L^2_0(\T)}^2 + \| v(t)-\bar v(t) \|_{L^2_0(\T)}^2 \right) \\
 &\leq \frac\varepsilon 2 + \frac\varepsilon 2 = \varepsilon .
 \end{align*}

 We shall prove now that the event $\tilde\tau_M < t_{\varepsilon,M}$ is impossible. Indeed, assume for instance that $\left\|u\left(\tilde\tau_M\right)\right\|_{H^1_0(\T)} \geq \frac12 +\sqrt{C_9^{(M)}}$, then we would have
 \[ \left\| u\left(\tilde\tau_M\right)- \bar u\left(\tilde\tau_M\right) \right\|_{H^1_0(\T)} \leq \frac{\sqrt\varepsilon}{2} \quad \text{and} \quad \left\|\bar u\left(\tilde\tau_M\right)\right\|_{H^1_0(\T)}^2 \leq C_9^{(M)} , \]
 and thus,
 \[ \frac{\sqrt\varepsilon}{2} \geq \left\| u\left(\tilde\tau_M\right)- \bar u\left(\tilde\tau_M\right) \right\|_{H^1_0(\T)} \geq \left| \left\| u\left(\tilde\tau_M\right) \right\|_{H^1_0(\T)} - \left\| \bar u\left(\tilde\tau_M\right) \right\|_{H^1_0(\T)} \right| \geq  \left( \frac12 +\sqrt{C_9^{(M)}} \right) - \sqrt{C_9^{(M)}} =\frac12 , \]
 which is false for too small values of $\varepsilon$.
 
 We just have proven that for $M>0$ arbitrarily chosen and for all $u_0, v_0 \in H^2_0(\T)$ such that $\| u_0 \|_{H^1_0(\T)}^2 + \| v_0 \|_{H^1_0(\T)}^2 \leq M$, we have
 \[ \P \left( \| u(t_{\varepsilon,M}) \|_{L^2_0(\T)}^2 + \| v(t_{\varepsilon,M}) \|_{L^2_0(\T)}^2 \leq \varepsilon \right) \geq \P \left( \sup_{t\in[0,t_{\varepsilon,M}]} \| w(t)\|_{H^1_0(\T)} \leq \delta_{\varepsilon,M} \right) . \]
 To conclude the proof, it remains to check that
 \begin{equation}\label{SChitting}
 p_{\varepsilon,M} :=\P \left( \sup_{t\in[0,t_{\varepsilon,M}]} \| w(t) \|_{H^1_0(\T)} \leq \delta_{\varepsilon,M} \right) >0 .
 \end{equation}
We can write $\{ \sup_{t\in[0,t_{\varepsilon,M}]} \| w(t) \|_{H^1_0(\T)} \leq \delta_{\varepsilon,M} \} = \{ (w(t))_{t\in[0,t_{\varepsilon,M}]} \in B \}$ where $B$ is the closed ball of $C([0,t_{\varepsilon,M}],H^1_0(\T))$ with radius $\delta_{\varepsilon,M}$. Since the process $(w(t))_{t\in[0,t_{\varepsilon,M}]}$ is the mild solution to the stochastic heat equation (\textit{i.e.} Equation \eqref{strongsolution} with initial condition $w(0)\equiv0$ and flux $A\equiv0$), we can apply the support theorem from \cite[Theorem 1.1]{Nak04} which implies $\P( (w(t))_{t\in[0,t_{\varepsilon,M}]} \in B )>0$, so that \eqref{SChitting} is satisfied.
\end{proof}

\begin{lemma}\label{lem:uniqueness}
Under Assumptions \ref{ass:A} and \ref{ass:g}, any invariant measure $\mu$ for the process $(u(t))_{t\geq0}$ solution to \eqref{SSCL} is unique.
\end{lemma}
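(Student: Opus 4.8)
The plan is to run the "small-noise" coupling argument sketched before Lemma~\ref{entranceLB}. Fix two invariant measures $\mu_1,\mu_2\in\mathcal P(H^2_0(\T))$ and construct, on one probability space and driven by the \emph{same} $Q$-Wiener process $(W^Q(t))_{t\geq0}$, two solutions $(u(t))_{t\geq0}$ and $(v(t))_{t\geq0}$ of \eqref{SSCL} with $\mathcal F_0$-measurable initial conditions $u(0)\sim\mu_1$ and $v(0)\sim\mu_2$ taken independent. By invariance, $u(t)\sim\mu_1$ and $v(t)\sim\mu_2$ for every $t\geq0$. By Proposition~\ref{L1C}, the map $t\mapsto\|u(t)-v(t)\|_{L^1_0(\T)}$ is almost surely non-increasing, hence converges almost surely to a limit $X_\infty\geq0$. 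The whole argument reduces to proving that $X_\infty=0$ almost surely.

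To this end, fix $\varepsilon>0$ and let $R>D_0/\nu$ be the radius provided by Lemma~\ref{entranceLB}. I would build recursively a sequence of stopping times: set $T_0=0$, let $S_k:=\inf\{t\geq T_k:\|u(t)\|_{H^1_0(\T)}^2+\|v(t)\|_{H^1_0(\T)}^2\leq R\}$ be the next entrance time into the ball of radius $\sqrt R$, and put $T_{k+1}:=S_k+t_{\varepsilon,R}$, where $t_{\varepsilon,R}>0$ and $p_{\varepsilon,R}\in(0,1)$ are the time and the probability furnished by Lemma~\ref{entrance} with $M=R$. Applying Lemma~\ref{entranceLB} conditionally on the current state (it holds for every deterministic datum in $H^2_0(\T)$) together with the strong Markov property, each $S_k$ is almost surely finite. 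At time $S_k$ the pair lies in $H^2_0(\T)\times H^2_0(\T)$ with $\|u(S_k)\|_{H^1_0(\T)}^2+\|v(S_k)\|_{H^1_0(\T)}^2\leq R$, so the strong Markov property and Lemma~\ref{entrance} give, for the success event $A_k:=\{\|u(T_{k+1})\|_{L^2_0(\T)}^2+\|v(T_{k+1})\|_{L^2_0(\T)}^2\leq\varepsilon\}$, the \emph{uniform} lower bound $\P(A_k\mid\mathcal F_{S_k})\geq p_{\varepsilon,R}$. Since $A_0,\dots,A_{n-1}$ are $\mathcal F_{S_n}$-measurable, a conditional Borel--Cantelli computation yields $\P(A_0^c\cap\cdots\cap A_n^c)\leq(1-p_{\varepsilon,R})^{\,n+1}\to0$, so almost surely some $A_k$ occurs. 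On $A_k$, Cauchy--Schwarz on the unit-mass torus gives $\|u(T_{k+1})-v(T_{k+1})\|_{L^1_0(\T)}\leq\|u(T_{k+1})-v(T_{k+1})\|_{L^2_0(\T)}\leq\sqrt{2\varepsilon}$, and by Proposition~\ref{L1C} this bound persists for all later times; hence $X_\infty\leq\sqrt{2\varepsilon}$ almost surely. Letting $\varepsilon\downarrow0$ forces $X_\infty=0$ almost surely.

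To conclude that $\mu_1=\mu_2$, I would test against bounded Lipschitz functions $\varphi:L^1_0(\T)\to\R$, which are measure-determining on the separable metric space $L^1_0(\T)$. Writing $\tilde\mu_1,\tilde\mu_2$ for the images of $\mu_1,\mu_2$ on $L^1_0(\T)$ furnished by Lemma~\ref{lem:sigmafields}, stationarity gives $\int\varphi\,\dd\tilde\mu_1-\int\varphi\,\dd\tilde\mu_2=\E[\varphi(u(t))-\varphi(v(t))]$ for every $t\geq0$. Since $\|u(t)-v(t)\|_{L^1_0(\T)}\to0$ almost surely, the Lipschitz bound $|\varphi(u(t))-\varphi(v(t))|\leq\mathrm{Lip}(\varphi)\,\|u(t)-v(t)\|_{L^1_0(\T)}$ shows the integrand tends to $0$ almost surely, while it is dominated by the constant $2\|\varphi\|_\infty$. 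Dominated convergence then yields $\int\varphi\,\dd\tilde\mu_1=\int\varphi\,\dd\tilde\mu_2$ for all such $\varphi$, whence $\tilde\mu_1=\tilde\mu_2$ and, by the second part of Lemma~\ref{lem:sigmafields}, $\mu_1=\mu_2$.

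I expect the recurrence step to be the main obstacle. Making the conditional Borel--Cantelli argument rigorous requires two uniformities: that the return times $S_k$ into the ball of radius $\sqrt R$ are almost surely finite regardless of the (random) re-entry position, and that the success probability is bounded below by a single constant $p_{\varepsilon,R}>0$ over all admissible re-entry positions and over all iterations. These are precisely the features built into Lemma~\ref{entranceLB} (finiteness of the entrance time for arbitrary deterministic data) and Lemma~\ref{entrance} (the time $t_{\varepsilon,M}$ and probability $p_{\varepsilon,M}$ depending only on $M$ and $\varepsilon$), and they must be combined with the strong Markov property of the coupled process to license the restart at each $S_k$.
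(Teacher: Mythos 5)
Your proposal is correct and follows essentially the same route as the paper: the same recursive stopping-time construction with entrance times into the $H^1_0(\T)$-ball of radius $\sqrt R$ (Lemma~\ref{entranceLB}), the same uniform success probability $p_{\varepsilon,R}$ from Lemma~\ref{entrance} combined with the strong Markov property to get the geometric bound $(1-p_{\varepsilon,R})^J$, the same use of the monotonicity from Proposition~\ref{L1C} to propagate smallness of the $L^1_0(\T)$ distance, and the same conclusion via bounded Lipschitz test functions and Lemma~\ref{lem:sigmafields}. No gaps.
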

\begin{proof}
\textbf{Step 1. Almost sure confluence.} We start by fixing $\varepsilon>0$ small to which we associate the value $t_{\varepsilon,R}$ defined at Lemma \ref{entrance}, where $R$ has been defined at Lemma \ref{entranceLB}. We define the increasing stopping time sequence
 \[ \mathbf T_1 := \tau_R \]
 \[ \mathbf T_2 := \inf \left\{ t \geq \mathbf T_1 + t_{\varepsilon,R} : \| u(t) \|_{H^1_0(\T)}^2 + \| v(t) \|_{H^1_0(\T)}^2 \leq R \right\} \]
 \[ \mathbf T_3 := \inf \left\{ t \geq \mathbf T_2 + t_{\varepsilon,R} : \| u(t) \|_{H^1_0(\T)}^2 + \| v(t) \|_{H^1_0(\T)}^2 \leq R \right\} \]
 \[ \vdots \]
 
Lemma \ref{entranceLB} and the strong Markov property (Corollary \ref{markov}) ensure that every $\mathbf T_j$ is finite almost surely. We claim that
 \begin{equation}\label{induction}
 \forall J \in \N^* , \qquad \P \left( \forall j = 1 , \dots , J , \quad \left\| u(\mathbf T_j+t_{\varepsilon,R}) \right\|_{L^2_0(\T)}^2 + \left\| v(\mathbf T_j+t_{\varepsilon,R}) \right\|_{L^2_0(\T)}^2 > \varepsilon \right) \leq (1-p_{\varepsilon,R})^J .
 \end{equation}
 Indeed, it is true for $J=1$ thanks to the strong Markov property and Lemma \ref{entrance}:
\begin{align*}
&\P_{(u_0,v_0)} \left( \left\| u(\tau_R+t_{\varepsilon,R}) \right\|_{L^2_0(\T)}^2 + \left\| v(\tau_R+t_{\varepsilon,R}) \right\|_{L^2_0(\T)}^2 > \varepsilon \right) \\
&= \E_{(u_0,v_0)} \left[ \P_{(u_0,v_0)} \left( \left\| u(\tau_R+t_{\varepsilon,R}) \right\|_{L^2_0(\T)}^2 + \left\| v(\tau_R+t_{\varepsilon,R}) \right\|_{L^2_0(\T)}^2 > \varepsilon   | \mathcal F_{\tau_R} \right) \right] \\
&= \E_{(u_0,v_0)} \left[ \P_{\left(u(\tau_R),v(\tau_R)\right)} \left( \| u(t_{\varepsilon,R}) \|_{L^2_0(\T)}^2 + \|v(t_{\varepsilon,R})\|_{L^2_0(\T)}^2>\varepsilon \right) \right] \\
&\leq 1-p_{\varepsilon,R} ,
\end{align*}
and the general case follows by induction: assuming that inequality \eqref{induction} is true for some $J\in\N^*$, we have
\begin{align*}
 &\P_{(u_0,v_0)} \left( \forall j=1,\dots,J+1,\quad \| u(\mathbf T_j+t_{\varepsilon,R}) \|_{L^2_0(\T)}^2 + \| v(\mathbf T_j+t_{\varepsilon,R}) \|_{L^2_0(\T)}^2 > \varepsilon \right) \\
 &\quad = \E_{(u_0,v_0)} \left[ \P_{(u_0,v_0)} \left( \forall j=1,\dots,J+1,\quad \| u(\mathbf T_j+t_{\varepsilon,R}) \|_{L^2_0(\T)}^2 + \| v(\mathbf T_j+t_{\varepsilon,R}) \|_{L^2_0(\T)}^2 > \varepsilon | \mathcal F_{\mathbf T_{J+1}} \right) \right] \\
 &\quad = \E_{(u_0,v_0)} \left[ \left( \prod_{j=1}^J \ind{\| u(\mathbf T_j+t_{\varepsilon,R}) \|_{L^2_0(\T)}^2 + \| v(\mathbf T_j+t_{\varepsilon,R}) \|_{L^2_0(\T)}^2 > \varepsilon} \right) \P_{(u(\mathbf T_{J+1}),v(\mathbf T_{J+1}))} \left( \| u(t_{\varepsilon,R}) \|_{L^2_0(\T)}^2 + \| v(t_{\varepsilon,R}) \|_{L^2_0(\T)}^2 > \varepsilon \right) \right] \\
 &\quad \leq (1-p_{\varepsilon,R})^J \times (1-p_{\varepsilon,R}) = (1-p_{\varepsilon,R})^{J+1} .
\end{align*}
Taking the limit when $J$ goes to infinity, we get
\begin{align*}
 &\P \left( \forall j \in \N^* , \quad \| u(\mathbf T_j+t_{\varepsilon,R}) \|_{L^2_0(\T)}^2 + \| v(\mathbf T_j+t_{\varepsilon,R}) \|_{L^2_0(\T)}^2 > \varepsilon \right) \\
 &\quad = \lim_{J\to\infty} \P \left( \forall j=1,\dots,J, \quad \| u(\mathbf T_j+t_{\varepsilon,R}) \|_{L^2_0(\T)}^2 + \| v(\mathbf T_j+t_{\varepsilon,R}) \|_{L^2_0(\T)}^2 > \varepsilon \right) \\
 &\quad \leq \lim_{J\to\infty} (1-p_{\varepsilon,R})^J = 0 ,
\end{align*}
and consequently,
\begin{equation}\label{probasmallball}
\P \left( \exists t \geq 0 , \quad \| u(t) \|_{L^2_0(\T)}^2 + \| v(t) \|_{L^2_0(\T)}^2 \leq \varepsilon \right) = 1 .
\end{equation}
Since $\|u(t)-v(t)\|_{L^1_0(\T)}^2 \leq \|u(t)-v(t)\|_{L^2_0(\T)}^2 \leq 2 ( \|u(t)\|_{L^2_0(\T)}^2+\|v(t)\|_{L^2_0(\T)}^2)$ and since the value $\varepsilon>0$ has been chosen arbitrarily at the beginning of this proof, then Equality \eqref{probasmallball} means that almost surely,
\[ \forall \varepsilon>0, \quad \exists t \geq 0 , \quad \| u(t) - v(t) \|_{L^1_0(\T)}^2 \leq 2\varepsilon . \]
Recall however that Proposition \ref{L1C} states that almost surely, the mapping $t\mapsto \|u(t)-v(t)\|_{L^1_0(\T)}$ is non-decreasing. It follows that almost surely,
\begin{equation}\label{L1decay}
\lim_{t\to\infty} \| u(t)-v(t) \|_{L^1_0(\T)} = 0 .
\end{equation}

\textbf{Step 2. Uniqueness.} Let us now assume that there exist two invariant measures $\mu_1, \mu_2$ for the solution of \eqref{SSCL}, and let us take initial conditions $u_0$ and $v_0$ with distributions $\mu_1$ and $\mu_2$ respectively. For any test function $\phi : L^1_0(\T)\to\R$ bounded and Lipschitz continuous, we have for all $t\geq0$,
\[ \left| \E \left[ \phi(u_0) \right] - \E \left[ \phi(v_0) \right] \right| = \left| \E \left[ \phi(u(t)) \right] - \E \left[ \phi(v(t)) \right] \right| \leq \E \left[ \left| \phi(u(t))-\phi(v(t)) \right| \right] . \]
Since $\phi$ is Lipschitz continuous, from~\eqref{L1decay}, we have almost surely
\[ \lim_{t\to\infty} \left| \phi(u(t))-\phi(v(t)) \right| = 0 . \]
Moreover, for any $t\geq0$, we have almost surely $| \phi(u(t))-\phi(v(t)) | \leq 2 \sup |\phi|$. Thus, we may apply the dominated convergence theorem, which yields
\[ \left| \E \left[ \phi(u_0) \right] - \E \left[ \phi(v_0) \right] \right| \leq \lim_{t\to\infty} \E \left[ \left| \phi(u(t)) - \phi(v(t)) \right|\right] = 0 , \]
so that $\E [ \phi(u_0) ] = \E [ \phi(v_0) ]$, or in other words,
\begin{equation}\label{eq:phi}
  \int_{H^2_0(\T)} \phi \dd\mu_1 = \int_{H^2_0(\T)} \phi \dd\mu_2 .
\end{equation}
According to Lemma~\ref{lem:sigmafields}, let $\tilde\mu_1$ and $\tilde\mu_2$ be the probability measures on $\mathcal P(L^1_0(\T))$ associated to $\mu_1$ and $\mu_2$ respectively. Equation~\eqref{eq:phi} rewrites
\[ \int_{L^1_0(\T)} \phi \dd\tilde\mu_1 = \int_{L^1_0(\T)} \phi \dd\tilde\mu_2 , \qquad \forall \phi \in C_b\left( L^1_0(\T) \right), \]
so that $\tilde\mu_1=\tilde\mu_2$ and thus, by Lemma~\ref{lem:sigmafields}, $\mu_1=\mu_2$.
\end{proof}

\begin{proof}[Proof of Theorem \ref{thm:im}]
It follows from Lemmas \ref{lem:existence} and \ref{lem:uniqueness}.
\end{proof}

\subsection*{Acknowledgements}

The authors would like to thank S\'ebastien Boyaval for fruitful discussions and for his careful reading of this manuscript.

\bibliographystyle{plain}
\bibliography{biblio}

\end{document}